\numberwithin{equation}{section}
\let\cal=\mathcal      
\def\mcc{M\raise.5ex\hbox{c}C}
\def\mccarthy{M\raise.5ex\hbox{c}Carthy}
\def\eg{{\it e.g. }}
\def\ie{{\it i.e. }}
\def\h{{\cal H}}
\def\M{{\cal M}}
\def\m{Mult}
\def\l{\lambda}
\def\vare{\varepsilon}
\let\i=\infty
\def\la{\langle}
\def\ra{\rangle}
\def\={\ = \ }    
\def\ot{\otimes}
\def\C{\mathbb C}
\def\D{\mathbb D}
\def\inn{\ \in \ }
\def\be{\setcounter{equation}{\value{theorem}} \begin{equation}}
\def\ee{\end{equation} \addtocounter{theorem}{1}}
\def\beq{\begin{eqnarray*}}
\def\eeq{\end{eqnarray*}}
\def\vs{\vskip 5pt}
\def\bp{{\sc Proof: }}
\def\ep{{}{\hfill $\Box$} \vskip 5pt \par}
\def\bl{\begin{lemma}}
\def\el{\end{lemma}}
\def\bt{\begin{theorem}}
\def\et{\end{theorem}}
\def\bprop{\begin{prop}}
\def\eprop{\end{prop}}
\def\bd{\begin{definition}}
\def\ed{\end{definition}}
\def\br{\begin{remark}}
\def\er{\end{remark}}
\def\bexer{\begin{exercise}}
\def\eexer{\end{exercise}}
\def\bfig{\begin{figure}}
\def\efig{\end{figure}}
\title{Global holomorphic functions in several non-commuting variables}
\author{Jim Agler
\thanks{Partially supported by National Science Foundation Grant
DMS 1068830}
\\ U.C. San Diego\\ La Jolla, CA 92093
\and
John E. M\raise.5ex\hbox{c}Carthy
\thanks{Partially supported by National Science Foundation Grants DMS 0966845 and
DMS 1300280}
\\ Washington University\\ St. Louis, MO 63130}
\def\be{\begin{equation}}
\def\ee{\end{equation}}
\def\norm#1{\| #1 \|}
\def\ip#1#2{\langle #1,#2 \rangle}
\def\m{\mathbb{M}}
\def\md{\mathbb{M}^d}
\def\mm{\mathbb{M}_m}
\def\mn{\mathbb{M}_n}
\def\mnd{\mathbb{M}_n^d}
\def\unin{\mathcal{U}_n}
\def\invn{\mathcal{I}_n}
\def\invm{\mathcal{I}_m}
\def\c{\mathbb{C}}
\def\D{\mathbb{D}}
\def\r{\mathbb{R}}
\def\n{\mathbb{N}}
\def\cn{{\mathbb{C}^n}}
\def\k{\mathcal{K}}
\def\h{\mathcal{H}}
\def\s{\mathcal{S}}
\def\lhk{\mathcal{L}(\mathcal{H},\mathcal{K})}
\def\l{\mathcal{L}}
\def\L{\mathcal{L}}
\def\f{\mathcal{F}}
\def\O{\Omega}
\def\ltwo{{\ell^2}}
\def\set#1#2{\{ #1 \, | \, #2\}}
\def\idk{{\rm id}_{\mathcal{K}}}
\def\idh{{\rm id}_{\mathcal{H}}}
\def\idl{{\rm id}_\ltwo}
\def\idcn{{\rm id}_{\mathbb{C}^n}}
\def\idcm{{\rm id}_{\mathbb{C}^m}}
\def\id#1{{\rm id}_#1}
\def\idd{{\rm id}}
\def\b#1#2{{\rm B}(#1,#2)}
\def\calgm{\mathcal{G}_m}
\def\calrm{\mathcal{R}_m}
\def\calbm{\mathcal{B}_m}
\def\calb{\mathcal{B}}
\def\calr{\mathcal{R}}
\def\calv{\mathcal{V}}
\def\calvh{\mathcal{V}_{\l(\h)}}
\def\calvhc{\mathcal{V}_{\l(\h,\c)}}
\def\calvm{\mathcal{V}_{\l(\M)}}
\def\calvhm{\mathcal{V}_{\l(\h,\M)}}
\def\calvhoj{\mathcal{V}_{\l(\h, \c^J)}}
\def\calg{\mathcal{G}}
\def\vspace{\mathcal{V}}
\def\wkto{\stackrel{\text{wk}}{\to}}
\DeclareMathOperator{\nc}{nc}
\DeclareMathOperator{\lb}{lb}
\DeclareMathOperator{\ran}{ran}
\DeclareMathOperator{\spn}{span}
\DeclareMathOperator{\ball}{ball}
\DeclareMathOperator{\grade}{Grade}
\DeclareMathOperator{\com}{Comm}
\def\hv{{\rm H}(\mathcal{V})}
\def\rv{{\rm R}(\mathcal{V})}
\def\pv{{\rm P}(\mathcal{V})}
\def\cv{{\rm C}(\mathcal{V})}
\def\hvh{{\rm H}_{\l(\h)}(\mathcal{V})}
\def\rvh{{\rm R}_{\l(\h)}(\mathcal{V})}
\def\pvh{{\rm P}_{\l(\h)}(\mathcal{V})}
\def\cvh{{\rm C}_{\l(\h)}(\mathcal{V})}
\def\ctvh{{\rm C}^{\tau}_{\l(\h)}(\mathcal{V})}
\def\htwole{{{\rm  H}_{L,\vare}^2}}
\def\htwolej{{{\rm  H}_{L,\vare}^{2(J)}}}
\def\ltwoi{{\ell^2}^{(I)}}
\def\ltwoj{{\ell^2}^{(J)}}
\def\pd{\mathbb{P}^d}
\newcommand\gdel{G_\delta}
\newcommand\kdel{{K_\delta}}
\def\delp{\delta'}
\newcommand\gdelp{G_{\delta'}}
\def\si{s^{-1}}
\def\elv{\vec{e_l}}
\def\ot{\otimes}
\newcommand\sss{\approx}
\renewcommand\={\ = \ }
\def\vs{\vskip 5pt}
\renewcommand\sin{\Sigma^d_n}
\def\C{\mathbb{C}}
\def\la{\langle}
\def\ra{\rangle}
\def\i{\infty}
\renewcommand\d{\delta}
\def\bd{\begin{defin}}
\def\ed{\end{defin}}
\def\be{\begin{equation}}
\def\ee{\end{equation}}
\def\begm{\left(\begin{matrix}}
\def\endm{\end{matrix}\right)}
\theoremstyle{definition}
\newtheorem{defin}[equation]{Definition}
\newtheorem{lem}[equation]{Lemma}
\newtheorem{prop}[equation]{Proposition}
\newtheorem{thm}[equation]{Theorem}
\newtheorem{fact}[equation]{Fact}
\newtheorem{remark}[equation]{Remark}
\newtheorem{cor}[equation]{Corollary}
\newtheorem{example}[equation]{Example}
\begin{document}

\bibliographystyle{plain}

\maketitle

Abstract: We define a free holomorphic function to be a function that is locally a bounded nc-function.
We prove that free holomorphic functions are the functions that are locally uniformly approximable
by free polynomials. We prove a realization formula and an Oka-Weil theorem for free analytic functions.

\section{Introduction}
\label{seca}

\subsection{Nc-functions and Free holomorphic functions}
\label{subsecaa}

A non-commutative polynomial, also called a free polynomial, in $d$ variables $x^1,\dots, x^d$,
is a finite linear combination of words in the variables, letting the empty word denote the constant $1$. For example
\[
p(x^1,x^2) \=
2 + x^1 -x^1 x^2 x^1 + 3 x^1 x^1 x^2 
\]
is a free polynomial of degree $3$ in 2 variables.
A free poynomial is a natural example of a graded function, which means if one evaluates it on a $d$-tuple
of $n$-by-$n$ matrices, one gets an $n$-by-$n$ matrix. 

Let $\mn$ denote the $n$-by-$n$ matrices over $\C$, and let  $\md$ denote $\cup_{n=1}^\i \mnd$.
A graded function is then a map from $\md$
 to $\m := \m^1$ that maps each element in
$\mnd$ to an element in $\mn$. 

Free polynomials have two  further important properties, in addition to being graded:
$p(x \oplus y) = p(x) \oplus p(y)$ and $p(s^{-1} x s) = s^{-1}p( x) s$.
The basic idea of non-commutative function theory is to define a class of graded functions
 that should bear the same relationship to free polynomials as holomorphic functions of $d$ variables
do to commutative polynomials. 

This has been done in a variety of ways: by Taylor \cite{tay73}, in the context of the functional calculus 
for non-commuting operators;
 Voiculescu \cite{voi4,voi10}, in the context of free probability;
Popescu \cite{po06,po08,po10,po11}, in the context of extending classical function theory to 
$d$-tuples of bounded operators;
 Ball, Groenewald and Malakorn \cite{bgm06}, in the context of extending realization formulas
from functions of commuting operators to functions of non-commuting operators;
Alpay and Kalyuzhnyi-Verbovetzkii \cite{akv06} in the context of
realization formulas for rational functions that are $J$-unitary on the boundary of the domain; and
Helton, Klep and McCullough \cite{hkm11a,hkm11b}
and Helton and McCullough \cite{hm12} in the context of developing a descriptive theory of the domains on
which LMI and semi-definite programming apply.

Very recently, Kaliuzhnyi-Verbovetskyi  and Vinnikov have written a monograph
\cite{kvv12}
that gives a panoramic view of the developments in the field to date.
In their work, functions are defined on nc-domains. Before we say what these are, let us 
establish some notation.  We let 
\begin{eqnarray}
\label{eqar1}
\invn &\ :=\ & \set{M \in \mn}{M \text{ is invertible}}\\
\label{eqar2}
\unin &:=& \set{M \in \mn}{M \text{ is unitary}}.
\end{eqnarray}
For $M_1 = (M_1^1,\ldots,M_1^d) \in \m_{n_1}^d$ and $M_2 = (M_2^1,\ldots,M_2^d) \in \m_{n_2}^d$, we define $M_1 \oplus M_2 \in \m_{n_1+n_2}^d$  by identifying $\c^{n_1} \oplus \c^{n_2}$ with $\c^{n_1 + n_2}$ and direct summing $M_1$ and $M_2$ componentwise, i.e.,
$$M_1 \oplus M_2 = \Big(M_1^1 \oplus M_2^1, \ldots, M_1^d \oplus M_2^d\Big).$$
Likewise, if $M = (M^1,\ldots,M^d)\in \m_n^d$ and $S \in \invn$, we define $S^{-1} M  S\in \m_n^d$ by
$$S^{-1} M S = (S^{-1} M^1 S, \ldots, S^{-1} M^d S).$$

\bd\label{defa1}
If $D \subseteq \m^d$ we say that $D$ is an {\em nc-set} if $D$ is closed with respect to the formation of direct sums and unitary conjugations, i.e.
\be\label{2.20}
\forall_{n_1,n_2}\ \forall_{M_1 \in D \cap \m_{n_1}^d }\  \forall_{ M_2 \in D \cap \m_{n_2}^d}\  M_1 \oplus M_2 \in D \cap \m_{n_1+n_2}^d\notag
\ee
and
\be\label{2.30}
\forall_n\ \forall_{M \in D \cap \mn^d}\ \forall_{U \in \unin}\  U^*MU \in D \cap \mn^d. \notag
\ee

We say that a set $D \subseteq \m^d$ is \emph{nc-open} (resp. \emph{closed, bounded)} if $D \cap M_n^d$ is open (resp. closed,  bounded) for all $n \ge 1$. An \emph{nc-domain} is an nc-set that is nc-open.
\ed

\begin{defin}\label{defa3}
An {\em nc-function} is a graded function $\phi$ defined on an nc-domain $D$ such that

i) If $x,y \inn D$, then $\phi(x \oplus y) = \phi(x) \oplus \phi(y)$.

ii) If $s \inn \invn$ and $x, s^{-1} x s \inn D \cap \mnd$ then $\phi(\si x s) = \si \phi(x) s$.

We let $\nc(D)$ denote the set of all nc-functions on $D$.
\end{defin}

In this paper, we shall develop a global theory of holomorphic functions in non-commuting variables, 
by piecing together functions on a nice class of nc-domains,
the basic free open sets.

\begin{defin}\label{defa31}
 if $\delta$ is a matrix of free polynomials in $d$ variables, we define 
\be
\label{eqa3}
G_\delta \= \{ M \in \md: \, \| \delta(M) \| < 1 \} .
\ee
A set of the form \eqref{eqa3} is called a {\em basic free open set}.
The \emph{free topology} on $\m^d$ is the topology that has as a basis
the basic free open sets.
 A \emph{free domain} is a subset of $\m^d$ that is open in the free topology.
\end{defin}
Notice that the intersection of two basic free open sets is another basic free open set,
because $G_{\delta_1} \cap G_{\delta_2} = G_{\delta_1 \oplus \delta_2}$.
Notice also that if $\alpha \inn \C^d$, and we define 
\[
\delta(x) \=
\begm
\frac{1}{\vare}(x^1 - \alpha^1 \idd) \\
\vdots\\
\frac{1}{\vare}(x^d - \alpha^d \idd)
\endm,
\]
then $\gdel \cap \m^d_1$ is the Euclidean ball centered at $\alpha$ of radius $\vare$,
so the free topology agrees with the usual topology on the scalars.

\bd
\label{defa4}
A  free holomorphic function on a free domain $D$ is a  function $\phi$ 
such that every point $M$ in $D$ is contained in a basic free open set $\gdel \subseteq D$
on which $\phi$ 
is  a bounded nc-function.
%
%
\ed

Whereas a basic free open set is an nc-domain, a general free open set may not be,
since it need not be closed under direct sums.

%
%
The locally bounded condition, which one gets automatically in the scalar case, seems to play an
essential r\^ole in   developing an analytic, rather than an  algebraic, theory.
For example, it allows us to give a characterization of free holomorphic functions as
functions that are locally limits of free polynomials.
\vs
{\bf Theorem \ref{thm8.20}.}
Let $D$ be a free domain and let $\phi$ be a graded function defined on $D$. Then $\phi$ is a free holomorphic function if and only if $\phi$ is locally approximable by polynomials.
\vs
A non-commutative power series makes sense, but only when the center is a point in $\m_1^d$.
Given a point $M \in D \cap \mnd$ for some $n \geq 2$, one cannot approximate $\phi$ near $M$ by
expanding a power series about $M$. Being locally approximable by polynomials seems a natural substitute
for analyticity. Rational functions (or, more generally, meromorphic functions built up from
free holomorphic functions) are also free holomorphic, provided one stays away from the poles
(Theorem~\ref{thmj1}).

The classical Oka-Weil theorem states that a holomorphic function on a neighborhood of a compact, polynomially
convex set, can be uniformly approximated by polynomials. See \eg \cite[Chap. 7]{alewer}.
We derive Theorem~\ref{thm8.20} as a special case of a free Oka-Weil theorem.
\vs
{\bf Theorem \ref{thm8.10}.} 
Let $E \subseteq \m^d$ be a  compact set  (in the free topology) that is polynomially convex.
Assume that $\phi$ is a free holomorphic function defined on a neighborhood of $E$. Then $\phi$ can be uniformly approximated by free polynomials on $E$.
\vs
The  corona theorem of Carleson \cite{car62} says that an $N$-tuple of bounded holomorphic functions
on the unit disk is not contained in a proper ideal if and only if the functions are jointly bounded below
by a positive constant. We obtain a free version.

{\bf Theorem \ref{thmh3}.} Let $\{ \psi_i \}_{i=1}^N$ be bounded free holomorphic functions on $\gdel$. Assume for some $\vare > 0$, we have
\[
\sum_{i=1}^N \psi_i(x)^* \psi_i (x) \ \geq \  \vare^2 \, {\rm id}   .
\]
Then there are bounded free holomorphic functions $\phi_i$ on $\gdel$ such that
\[
\sum_{i=1}^N \psi_i(x) \phi_i(x)
\= {\rm id}   .
\]
Moreover, one can choose the functions so that
\[
\| (\phi_1, \dots, \phi_N ) \|\  \leq \ \frac{1}{\vare}.
\]
\vs
Our realization formula Theorem~\ref{thm7.10} can be used to show that every scalar-valued
function on $\gdel$ that is bounded on commuting matrices (using the Taylor functional calculus) can be extended to a free analytic function with the same norm.
\begin{defin}
\label{defmay6}
Let 
$\| f \| _{\delta, {\rm com}} = \sup \{\| f(T) \| \}$, where $T$ ranges over {\em commuting } elements $T$ in $\m_n^d$
that satisfy $\| \delta(T) \| \leq 1$ and $\sigma(T) \subset \gdel$.
Let $H^\i_{\delta, {\rm com}}$ be the Banach
algebra of holomorphic functions on $\gdel$ with this norm. 
\end{defin}

{\bf Theorem \ref{thmh4}.}
Let $$I  \= \{ \phi \in H^\i(\gdel) \, | \, \phi |_{\m^d_1} = 0 \}.
$$
Then 
$H^\i(\gdel)/ I$ is isometrically isomorphic to 
$H^\i_{\delta, {\rm com}}$.

\subsection{The structure of free holomorphic functions}
\label{subseca2}

The engine that drives our results is a model and realization formula for free holomorphic functions
on basic free open sets. To describe these, we must expand the notion of nc-function
 to consider `$\k$-valued' nc-functions on $D$ where $\k$ is a separable Hilbert space. One way to model such objects would be to view them as concrete column vectors with entries in $\nc(D)$. However, we shall adopt an approach which uses tensor products. If $\h$ and $\k$ are Hilbert spaces, we let $\lhk$ denote the bounded linear transformations from $\h$ to $\k$. We identify $(\c^{n_1} \otimes \k) \oplus (\c^{n_2} \otimes \k)$  and $\c^{n_1+n_2} \otimes \k$ in the obvious way. If $T_1 \in \l(\c^{n_1},\c^{n_1} \otimes \k)$ and $T_2 \in \l(\c^{n_2},\c^{n_2} \otimes \k)$, we define $T_1 \oplus T_2 \in \l(\c^{n_1+n_2},\c^{n_1+n_2}\otimes \k)$ by requiring that
$$(T_1 \oplus T_2)(v_1 \oplus v_2) = T_1(v_1) \oplus T_2(v_2 )$$
for all $v_1 \in \c^{n_1}$, $v_2 \in \c^{n_2}$, and $k \in \k$.

\bd
\label{defa5}
We say a function $f$ is a $\k$-valued nc-function if the domain of $f$ is some nc-domain, $D$,
\be\label{2.80}
\forall_{n}\  \forall_{x \in D \cap \mn^d}\  f(x) \in \l(\cn,\cn \otimes \k),
\ee
\be\label{2.90}
\forall_{x,y \in D}\ f(x \oplus y) = f(x) \oplus f(y), \text{ and}
\ee
\be\label{2.100}
\forall_{n}\ \forall_{x \in D \cap \mn^d}\  \forall_{s \in \invn}\ s^{-1} x s \in D \implies f(s^{-1}x s) = (s^{-1}\otimes \idk)f(x)s.
\ee
If $D$ is an nc-domain, we let $\nc_\k(D)$ denote the collection of $\k$-valued  nc-functions on $D$.
\ed

Let $p$ be a free polynomial, and $f$ be in $\nc_\k(D)$. Then we define $pf \in \nc_\k(D)$
by
\[
pf(x) \= [p(x) \otimes \idk] f(x) .
\]
Now let $\delta$ be an $I$-by-$J$ matrix of free polynomials, and let $u$ be in $\nc_{{\ell^2}^{(J)}}(D)$. We define 
$
\delta u \in  \nc_{{\ltwo}^{(I)}}(D)$ by matrix multiplication. Let $u = (u_1, \dots, u_J)^t$;
then define 
$\delta u$ by  the formula
\[
(\delta u)(x) \= 
\begm
\sum_{j=1}^J  [\delta_{1j}(x) \otimes \idl ] u_j(x) \\
\vdots \\
\sum_{j=1}^J  [\delta_{Ij}(x) \otimes \idl ] u_j(x)
\endm
 \qquad  x \in D.
\]
\begin{defin}\label{defa6}
Let $\phi$ be a graded function on $G_\delta$. A \emph{$\delta$ nc-model for $\phi$}
 is a formula of the form
\be\label{7.10x}
1-\phi(y)^*\phi(x) = u(y)^*[ 1-\delta(y)^*\delta(x) ]u(x), \qquad x,y \in G_\delta
\ee
where $u$ is in $\nc_{{\ell^2}^{(J)}}G_\delta$. 
\end{defin}
\begin{defin}\label{defa7}
Let $\phi$ be a graded function on $G_\delta$.  A \emph{free $\delta$-realization for $\phi$ }
 is an isometry
\[
\mathcal{J}\= \begin{bmatrix}A&B\\C&D\end{bmatrix} 
\]
 such that for each $n\in \n$ and each $x \in \gdel \cap \mn^d$
\be
\label{may6_2}
\phi(x) = (\id{\cn} \ot A) +  (\id{\cn} \ot B)\delta(x) [\idd - ( \id{\cn}\ot D) \delta(x)]^{-1}
(\id{\cn} \ot C) .
\ee
\end{defin}
We prove in Theorem~\ref{thm7.10} that  every free holomorphic function that is bounded in norm by $1$ on $\gdel$ has a 
$\delta$-model and a free  $\delta$-realization. 

In the commutative case, and when $D$ is the polydisk, the result was first proved in \cite{ag90}.
The extension to $\gdel$ for scalar valued functions was first done by Ambrozie and Timotin \cite{at03}; Ball and Bolotnikov extended this result to functions of commuting operators in  \cite{babo04}. In the non-commutative case, the first
version of this result
was proved by Ball, Groenewald and Malakorn \cite{bgm06}.
They proved a realization formula for non-commutative power series
on  domains that could be described in terms of certain bipartite graphs; these include
the most important examples, the non-commutative polydisk and the non-commutative ball.

The  statement of the theorem is as follows (we omit Statement (2) for now).
We extend the notion of nc function to an $\l(\h,\k)$-valued function in the natural way
(see Definition~\ref{defax1}).

{\bf Theorem \ref{thm7.10}} Let $\h,\k_1,\k_2$ be finite dimensional Hilbert spaces.
 Let $\delta$ be an $I\times J$ matrix whose entries are free polynomials.
Let $\Psi$ be a graded $\l(\h,\k_1)$-valued function on $\gdel$,
and let  $\Phi$ be a graded $\l(\h,\k_2)$-valued  function on $G_\delta$. 
The following are equivalent.\\
\qquad (1) $\Psi(x)^* \Psi(x) - \Phi(x)^* \Phi(x) \geq 0$ on $\gdel$.\\
\qquad (3) There exists an nc $\l(\k_1,\k_2)$-valued function $\O$  satsifying
$\O \Psi = \Phi$ and such that $\O$ has a  free $\delta$-realization.
\vs
In the special case that $\Psi$ is the identity, this says that every bounded free analytic function has a 
free $\delta$-realization as in \eqref{may6_2}.

\section{Structure of the Paper}
\label{secb}

In Section~\ref{secbn} we discuss basic notions of nc domains and nc functions.
We prove that every nc function on a domain $D$ extends  to an nc function on
its envelope $D^\sim$, the similarity closed set generated by $D$ (Proposition~\ref{prop2.40}).

In Section~\ref{seclb}, we prove that locally bounded nc functions are holomorphic
(Theorem \ref{thm3.10}).
We define a free holomorphic function to be a locally bounded nc function, and prove that
Montel's theorem holds for these functions (Proposition \ref{prop3.10}).

To prove that bounded free holomorphic functions have realizations, we use a Hahn-Banach argument. To make this work, we need to know that the set of all functions of the form
\[
 u(y)^*[ 1-\delta(y)^*\delta(x) ]u(x), \qquad  u \in \nc_{{\ell^2}^{(J)}}G_\delta
\]
is a closed cone. Proving it is closed is delicate, so we rely on finite dimensional approximations.
In Section~\ref{secpncs} we develop the theory of partial nc-sets and partial nc-functions,
which are restrictions to finite sets of nc-functions. To allow us to piece these together
into an nc-function, we introduce the notion of a well-organized pair $(E,\s)$
(Definition \ref{def4.10}), which is a finite set $E$ and a finite number of similarities
with certain nice properties.

In Section \ref{secwo}, we show how to get $\delta$-models and $\delta$-realizations
on well-organized pairs. In Section~\ref{secfm}, we piece these together to get
a  $\delta$ nc-model on the whole set $\gdel$. The main theorem here is Theorem~\ref{thm6.10}.
We improve this theorem in Section~\ref{ssecdnc} to get Theorem~\ref{thm7.10},
which says one can find a free $\delta$-realization for the multiplier $\O$ .

In Section~\ref{seci1}  we use this structure theorem to derive our major consequences: the free Oka-Weil Theorem \ref{thm8.10}, which in particular gives 
a proof that  a function is free holomorphic if and only if it is locally approximable by free polynomials (Theorem~\ref{thm8.20}).

In Section~\ref{secj}, we prove that free meromorphic functions are free holomorphic off their singular sets. We give an index to notation and definitions in Section~\ref{secind}.

\section {Basic Notions}
\label{secbn}

\subsection{nc-bounded}
\label{ssecncd}

We define the nc-norm $\norm{\cdot}$ on each set $\mn^d$ by the formula
 $$\norm{M} = \max_{1 \le r \le d}\norm{M^r}$$
and when metric calculations are required, we shall use the nc-metric $d$, defined on each set $\mn^d$ by the formula
$$d(M,N)=\max_{1 \le r \le d}\norm{M^r-N^r}.$$
If $M \in D \cap \mn^d$ and $r >0$, we let
$$\b{M}{r} = \set{N \in \mn^d}{d(M,N) <r}. $$
Evidently, a set $D \subseteq \m^d$ is nc-bounded when
$$\sup_{M \in D \cap \mn^d} \norm{M} < \infty$$
for each $n \ge 1$. We say that a set $D \subseteq \m^d$ is \emph{bounded} if
$$\sup_{M \in D} \norm{M} < \infty.$$
Clearly, boundedness implies nc-boundedness but not conversely.

\subsection{Envelopes of nc-Domains}
\label{ssecen}

If $A \subseteq \m^d$, let us agree to say that $A$ is \emph{invariant} if for each $n \ge 1$ and each $S \in \invn$,
\be\label{2.40}
S^{-1}(A \cap \mn^d)S \subseteq A \cap \mn^d. \notag
\ee
As the intersection of invariant nc-sets is an invariant nc-set, it is clear that if $A \subseteq \m^d$, then there exists a \emph{smallest} invariant nc-set containing $A$. We formalize this fact in the following definition.
\begin{defin}\label{def2.20}
If $A \subseteq \m^d$, then $A^\sim$, the envelope of $A$, is the unique invariant nc-set satisfying $A \subseteq A^\sim$ and $A^\sim \subseteq B$ whenever $B$ is an invariant nc-set containing $A$.
\end{defin}
\begin{prop}\label{prop2.10}
Let $A \subseteq \m^d$ and let $M \in \mn^d$. $M \in A^\sim$ if and only if there exist an integer $m\ge 1$, integers $n_1,\ n_2,\ \ldots,\ n_m \ge 1$ satisfying $n=n_1+n_2+\ldots+n_m$, matrix tuples $M_1 \in A \cap \m_{n_1}^d, M_2 \in A \cap \m_{n_2}^d,\ \ldots,\ M_m \in A \cap \m_{n_m}^d$, and $S \in \invn$ such that
\be\label{2.x80}
M=S^{-1}(\bigoplus_{k=1}^m M_k)S
\ee
\end{prop}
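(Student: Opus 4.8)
The plan is to prove both inclusions separately. Let me denote by $A^\flat$ the set of all $M \in \m^d$ admitting a decomposition of the form \eqref{2.x80}, i.e.\ $M = S^{-1}(\bigoplus_{k=1}^m M_k)S$ with each $M_k \in A \cap \m_{n_k}^d$ and $S \in \invn$ where $n = \sum n_k$. We want to show $A^\flat = A^\sim$.

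For the inclusion $A^\flat \subseteq A^\sim$: any invariant nc-set $B$ containing $A$ must contain every finite direct sum $\bigoplus_{k=1}^m M_k$ of elements of $A$ (by closure under direct sums, applied inductively), and then must contain $S^{-1}(\bigoplus_k M_k) S$ for every invertible $S$ (by invariance). Hence $A^\flat \subseteq B$ for every such $B$, and in particular $A^\flat \subseteq A^\sim$. Taking $m=1$ and $S = \idd$ shows $A \subseteq A^\flat$.

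For the reverse inclusion $A^\sim \subseteq A^\flat$: since $A^\sim$ is by definition the smallest invariant nc-set containing $A$, and we have just shown $A \subseteq A^\flat$, it suffices to check that $A^\flat$ is itself an invariant nc-set. Closure under unitary (indeed arbitrary invertible) conjugation is immediate: if $M = S^{-1}(\bigoplus_k M_k)S$ and $T \in \invn$, then $T^{-1} M T = (ST)^{-1}(\bigoplus_k M_k)(ST)$, which is again of the required form. Closure under direct sums requires a small bookkeeping step: if $M = S^{-1}(\bigoplus_{k=1}^m M_k)S$ and $N = T^{-1}(\bigoplus_{\ell=1}^p N_\ell)T$ with $M \in \m_n^d$, $N \in \m_{n'}^d$, then $M \oplus N = (S \oplus T)^{-1}\bigl((\bigoplus_k M_k) \oplus (\bigoplus_\ell N_\ell)\bigr)(S \oplus T)$, and since $S \oplus T \in \mathcal{I}_{n+n'}$ and the inner direct sum is a direct sum of the $m+p$ tuples $M_1,\dots,M_m,N_1,\dots,N_p$ from $A$, this exhibits $M \oplus N \in A^\flat$. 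Hence $A^\flat$ is an invariant nc-set containing $A$, so $A^\sim \subseteq A^\flat$, completing the proof.

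I do not anticipate a serious obstacle here; this is essentially a "smallest closed set equals set of finitely generated elements" argument, and the only thing to be careful about is that the operations of direct sum and conjugation compose correctly at the level of the indexing data (the partition $n = \sum n_k$ and the conjugating matrix), which the displays above make explicit. One should also note at the outset that an arbitrary intersection of invariant nc-sets is again an invariant nc-set, so that $A^\sim$ is well-defined — but this is already granted in the text preceding Definition~\ref{def2.20}.
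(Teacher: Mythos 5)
Your proposal is correct and follows exactly the paper's argument: define the set of elements of the form \eqref{2.x80}, verify it is an invariant nc-set containing $A$ and is contained in every such set, and conclude it equals $A^\sim$. The paper simply asserts the invariant-nc-set property without the bookkeeping you spell out, so your writeup is a more detailed version of the same proof.
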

\begin{proof}
Let $B$ denote the collection of matrix tuples $M$ that have the form as presented in \eqref{2.x80}. Then $B$ is an invariant nc-set. Also, $B \subseteq C$ if $C$ is an invariant nc-set that contains $A$. Therefore, $B=A^\sim$.
\end{proof}
As corollaries to Proposition \ref{prop2.10} we obtain the following two facts which will prove useful in the sequel.
\begin{prop}\label{prop2.20}
If $A$ is an nc-set and $N \in \mn^d$, then $N \in A^\sim$ if and only if there exists $M \in A \cap \mn^d$ and $S \in \invn$ such that $N = S^{-1}MS$.
\end{prop}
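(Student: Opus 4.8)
The plan is to deduce Proposition~\ref{prop2.20} directly from Proposition~\ref{prop2.10}, using the hypothesis that $A$ is itself an nc-set to collapse the direct sum appearing in \eqref{2.x80} down to a single tuple in $A \cap \mn^d$. The reverse implication is immediate: if $N = S^{-1}MS$ with $M \in A \cap \mn^d$ and $S \in \invn$, then $N$ has the form \eqref{2.x80} with $m=1$, hence $N \in A^\sim$ by Proposition~\ref{prop2.10} (or simply by the definition of the envelope as an invariant nc-set containing $A$).

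For the forward implication, suppose $N \in A^\sim \cap \mn^d$. By Proposition~\ref{prop2.10} there are integers $m \ge 1$ and $n_1,\dots,n_m \ge 1$ with $n = n_1 + \dots + n_m$, tuples $M_k \in A \cap \m_{n_k}^d$, and $S \in \invn$ such that $N = S^{-1}\big(\bigoplus_{k=1}^m M_k\big)S$. Since $A$ is an nc-set, it is closed under the formation of direct sums, so $M := \bigoplus_{k=1}^m M_k$ lies in $A \cap \m_{n_1 + \dots + n_m}^d = A \cap \mn^d$. Then $N = S^{-1}MS$ with $M \in A \cap \mn^d$ and $S \in \invn$, which is exactly the asserted form. (One should note that Proposition~\ref{prop2.10} as stated requires $m \ge 1$ and $n_k \ge 1$; when $N$ itself lies in $A \cap \mn^d$ the trivial case $m = 1$, $S = \id{}$ applies, so there is no edge case to worry about.)

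There is essentially no obstacle here: the only point requiring care is making sure the closure-under-direct-sums clause of Definition~\ref{defa1} is invoked with the right indices, i.e.\ that $\bigoplus_{k=1}^m M_k$ genuinely lands in $A$ rather than merely in $A^\sim$, and this is precisely what being an nc-set gives us (applied inductively $m-1$ times, since Definition~\ref{defa1} states the two-summand version). The unitary-conjugation clause of the nc-set definition is not needed, since the conjugating matrix $S$ is only invertible, not unitary, and the conclusion is allowed to use such $S$. So the proof is two or three lines.
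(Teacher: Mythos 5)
Your proof is correct and is exactly the argument the paper intends: Proposition~\ref{prop2.20} is stated there as an immediate corollary of Proposition~\ref{prop2.10} with no written proof, and your step of collapsing $\bigoplus_{k=1}^m M_k$ into a single element of $A \cap \mn^d$ via the direct-sum closure of the nc-set $A$ is precisely the missing detail. Nothing further is needed.
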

\begin{prop}\label{prop2.30}
If $D$ is an nc-domain, then  $D^\sim$ is an nc-domain.
\end{prop}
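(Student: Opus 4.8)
The plan is to use the explicit description of the envelope furnished by Proposition~\ref{prop2.20}. Since an nc-domain is in particular an nc-set, Proposition~\ref{prop2.20} applies to $D$ and gives, for every $n \ge 1$,
\[
D^\sim \cap \mn^d \= \{\, S^{-1} M S \ : \ M \in D \cap \mn^d,\ S \in \invn \,\}.
\]
Moreover, $D^\sim$ is by Definition~\ref{def2.20} an (invariant) nc-set, so it only remains to verify that $D^\sim$ is nc-open, i.e. that $D^\sim \cap \mn^d$ is open in $\mn^d$ for each $n$.

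Fix $n$ and fix $N \in D^\sim \cap \mn^d$. By the displayed formula there exist $S \in \invn$ and $M \in D \cap \mn^d$ with $N = S^{-1} M S$. Consider the conjugation map $\psi_S : \mn^d \to \mn^d$ defined by $\psi_S(X) = (S^{-1} X^1 S, \ldots, S^{-1} X^d S)$. This map is linear and bijective, with inverse $\psi_{S^{-1}}$, hence a homeomorphism of $\mn^d$ in the nc-metric (a linear isomorphism of a finite dimensional space). Since $D \cap \mn^d$ is open and contains $M$, the set $\psi_S(D \cap \mn^d)$ is open and contains $N = \psi_S(M)$. On the other hand, every point of $\psi_S(D \cap \mn^d)$ has the form $S^{-1} M' S$ with $M' \in D \cap \mn^d$, so by the displayed formula $\psi_S(D \cap \mn^d) \subseteq D^\sim \cap \mn^d$. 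Thus $N$ has an open neighbourhood in $\mn^d$ contained in $D^\sim \cap \mn^d$, and since $N$ was an arbitrary point of $D^\sim \cap \mn^d$, that set is open.

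I do not expect a genuine obstacle here; the proposition is essentially a corollary of Proposition~\ref{prop2.20}. The only two points requiring a moment's care are that Proposition~\ref{prop2.20} needs $D$ to be an nc-set (which is part of the hypothesis that $D$ is an nc-domain), and that conjugation by a fixed invertible matrix is a homeomorphism of $\mn^d$ — which is immediate from finite-dimensional linear algebra.
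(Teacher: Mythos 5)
Your proof is correct and follows the same route as the paper: both invoke Proposition~\ref{prop2.20} to write $N = S^{-1}MS$ and then observe that conjugation by the fixed invertible $S$ carries an open neighbourhood of $M$ in $D \cap \mn^d$ onto an open neighbourhood of $N$ inside $D^\sim \cap \mn^d$. The paper phrases this via an explicit $\delta$-ball computation ($N+\Delta = S^{-1}(M+S\Delta S^{-1})S$) rather than citing that conjugation is a homeomorphism, but the argument is identical in substance.
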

\begin{proof}
Let $D \subseteq \m^d$ be an nc domain. Fix $n\ge 1$ and $N \in D^\sim \cap \mn^d$. By Proposition \ref{prop2.20} there exist $M \in D \cap \mn^d$ and $S \in \invn$ such that $N = S^{-1}MS$. As $D \cap \mn^d$ is open, there exists $\delta > 0$ such that
$$M + S\Delta S^{-1} \in D \cap \mn^d$$
whenever  $\Delta \in \mn^d$ and $\norm{\Delta} < \delta$. Consequently, if $\Delta \in \mn^d$ and $\norm{\Delta} < \delta$, then
$$N + \Delta = S^{-1} M S + \Delta = S^{-1}(M+S \Delta S^{-1}) S \in D^\sim.$$
\end{proof}

\subsection{nc-Functions}
\label{ssecncf}

We defined nc-functions and $\k$-valued nc-functions in Definitions~\ref{defa3} and \ref{defa5}.
We extend this to `$\lhk$-valued' nc-functions on $D$ where $\h$ and $\k$ are Hilbert spaces. If $T_1 \in \l(\c^{n_1} \otimes \h,\c^{n_1} \otimes \k)$ and $T_2 \in \l(\c^{n_2} \otimes \h,\c^{n_2} \otimes \k)$ we define $T_1 \oplus T_2 \in \l(\c^{n_1+n_2} \otimes \h,\c^{n_1+n_2} \otimes \k)$ by requiring that
$$(T_1 \oplus T_2)((v_1 \oplus v_2) \otimes h) = T_1(v_1 \otimes h) \oplus T_2(v_2 \otimes h)$$
for all $v_1 \in \c^{n_1}$, $v_2 \in \c^{n_2}$, and $h \in \h$.
\begin{defin}
\label{defax1}
We say a function $f$ is an $\lhk$-valued nc-function 
(and write $f \, \in \, {\rm nc}_{\lhk}$ )
if the domain of $f$ is some nc-domain, $D$,
\be\label{2.110}
\forall_{n}\ \forall_{ x \in D \cap \mn^d}\  f(x) \in \l(\cn \otimes \h,\cn \otimes \k),
\ee
\be\label{2.120}
\forall_{x,y \in D}\ f(x \oplus y) = f(x) \oplus f(y), \text{ and}
\ee
\be\label{2.130}
\forall_{n}\ \forall_{x \in D \cap \mn^d}\ \forall_{s \in \invn}\ s^{-1} x s \in D \implies f(s^{-1}x s) = (s^{-1}\otimes \idk)f(x)( s \otimes \idh).
\ee
\end{defin}
A simple yet important point is that if $\dim(\h) = \dim(\k) =1$, then we can identify $\l(\cn \otimes \h,\cn \otimes \k)$ with $\mn$ and with this identification it is easy to verify that \eqref{2.110}, \eqref{2.120}, and \eqref{2.130} imply that Definition~\ref{defa3} is satisfied.
 Thus, theorems proved for $\lhk$-valued nc-functions hold for nc-functions. Likewise, theorems proved for $\lhk$-valued nc-functions hold for $\k$-valued nc-functions.

The following Proposition is also proved in \cite{bkvv12}.
\begin{prop}\label{prop2.40}
Let $\h$ and $\k$ be Hilbert spaces. If $D$ is an nc-domain and $f$ is an $\lhk$-valued nc-function on $D$, then there exists a unique nc-function $f^\sim$ on $D^\sim$ such that $f^\sim | D = f$.
\end{prop}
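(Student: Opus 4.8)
The plan is to extend $f$ from $D$ to $D^\sim$ using the structure of $D^\sim$ provided by Proposition~\ref{prop2.20}: every $N \in D^\sim \cap \mn^d$ can be written as $N = S^{-1}MS$ for some $M \in D \cap \mn^d$ and $S \in \invn$. The only reasonable definition is then
\be
f^\sim(N) \= (S^{-1} \otimes \idk)\, f(M)\, (S \otimes \idh),
\ee
and indeed this is \emph{forced} by the intertwining property \eqref{2.130} if $f^\sim$ is to be an nc-function agreeing with $f$ on $D$, which immediately gives uniqueness. So the real content is (a) well-definedness of this formula (independence of the representation $N = S^{-1}MS$), and (b) verifying that the resulting $f^\sim$ satisfies \eqref{2.110}, \eqref{2.120}, \eqref{2.130} on $D^\sim$.

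For well-definedness, suppose $N = S_1^{-1} M_1 S_1 = S_2^{-1} M_2 S_2$ with $M_1, M_2 \in D \cap \mn^d$. Then $M_2 = (S_2 S_1^{-1}) M_1 (S_2 S_1^{-1})^{-1}$, i.e. $M_1$ and $M_2$ are similar via $T := S_1 S_2^{-1} \in \invn$, with $M_1 = T^{-1} M_2 T$ and \emph{both} $M_1, M_2 \in D$. Here is where I would use the fact that $D$ is an nc-\emph{domain}: since $D \cap \mn^d$ is open and closed under unitary conjugation, a standard argument (the one used, e.g., in proving that nc-functions respect similarities that stay in the domain — and the reason the hypothesis in \eqref{2.130} that $s^{-1}xs \in D$ suffices) lets me apply \eqref{2.130} along a path or via the $2n$-dimensional trick: form $M_1 \oplus M_2 \in D \cap \m_{2n}^d$, and conjugate by an invertible block matrix built from $T$ to move between the two summands, staying inside $D$ because $D$ is open and one can absorb small perturbations. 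Applying $f$'s direct-sum and similarity properties to this larger tuple then yields $(S_1^{-1}\otimes\idk)f(M_1)(S_1\otimes\idh) = (S_2^{-1}\otimes\idk)f(M_2)(S_2\otimes\idh)$, which is exactly what well-definedness requires. This step — making the similarity between $M_1$ and $M_2$ "visible" to $f$ even though the connecting path might leave $D$ — is the main obstacle; the cleanest route is the block/direct-sum device, exploiting that $\begm M_1 & 0 \\ 0 & M_2 \endm$ and its conjugates stay in $D$ because $D$ is open and an explicit homotopy of invertibles can be arranged within a small enough neighborhood, or alternatively invoking Proposition~\ref{prop2.10} directly.

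Once $f^\sim$ is well-defined, the remaining verifications are routine. Condition \eqref{2.110} is clear: $f^\sim(N) = (S^{-1}\otimes\idk)f(M)(S\otimes\idh) \in \l(\cn\otimes\h, \cn\otimes\k)$. For the direct-sum property \eqref{2.120}, given $N_i = S_i^{-1} M_i S_i \in D^\sim \cap \m_{n_i}^d$ for $i=1,2$, write $N_1 \oplus N_2 = (S_1 \oplus S_2)^{-1}(M_1 \oplus M_2)(S_1 \oplus S_2)$ with $M_1 \oplus M_2 \in D \cap \m_{n_1+n_2}^d$ (using that $D$ is an nc-set), and compute $f^\sim(N_1 \oplus N_2)$ from the defining formula using $f(M_1 \oplus M_2) = f(M_1) \oplus f(M_2)$; the tensor-identity bookkeeping gives $f^\sim(N_1) \oplus f^\sim(N_2)$. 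For \eqref{2.130}, if $N \in D^\sim \cap \mn^d$ with $N = S^{-1}MS$, $M \in D$, and $t \in \invn$ with $t^{-1}Nt \in D^\sim$, then $t^{-1}Nt = (St)^{-1} M (St)$ exhibits $t^{-1}Nt$ as a conjugate of the same $M \in D$, so by definition $f^\sim(t^{-1}Nt) = ((St)^{-1}\otimes\idk)f(M)(St\otimes\idh) = (t^{-1}\otimes\idk)\big[(S^{-1}\otimes\idk)f(M)(S\otimes\idh)\big](t\otimes\idh) = (t^{-1}\otimes\idk)f^\sim(N)(t\otimes\idh)$, as required. Finally, $f^\sim|D = f$ because for $M \in D$ one may take $S = I$. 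This completes the plan; note the argument uses that $D^\sim$ is itself an nc-domain (Proposition~\ref{prop2.30}) so that the hypotheses of Definition~\ref{defax1} make sense for $f^\sim$.
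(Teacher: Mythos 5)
There is a gap at the one step you yourself flag as ``the main obstacle,'' and it stems from a misreading of condition \eqref{2.130}. That condition is purely pointwise: its hypothesis is exactly that $x \in D$ \emph{and} $s^{-1}xs \in D$; it asks for no path of conjugates inside $D$, no connectivity, and no openness. In your well-definedness situation you have already observed that $M_1 \in D$ and $M_2 \in D$ are similar via the explicit invertible $T = S_1S_2^{-1}$ (note the direction: $M_2 = T^{-1}M_1T$ and $M_1 = TM_2T^{-1}$, not $M_1 = T^{-1}M_2T$ as written). So \eqref{2.130} applies verbatim with $x = M_1$, $s = T$, giving $f(M_2) = (T^{-1}\otimes\idk)f(M_1)(T\otimes\idh)$, and conjugating both sides by $S_2$ yields $(S_1^{-1}\otimes\idk)f(M_1)(S_1\otimes\idh) = (S_2^{-1}\otimes\idk)f(M_2)(S_2\otimes\idh)$. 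That is the entire argument, and it is what the paper does. Your proposed detour through ``applying \eqref{2.130} along a path,'' the $2n$-dimensional block device, and the claim that conjugates of $M_1\oplus M_2$ ``stay inside $D$ because $D$ is open and one can absorb small perturbations'' is not only unnecessary but unjustified as stated: openness of $D\cap\m_{2n}^d$ gives no control over conjugation by a fixed, non-unitary invertible matrix, and in general $S^{-1}(D\cap\mn^d)S \not\subseteq D$ (that failure is precisely why $D^\sim$ is a nontrivial enlargement of $D$). So the key step is left both unproved and pointed in the wrong direction, even though the correct one-line argument is available from the hypotheses you have already set up.

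The remainder of your proposal --- the forced definition of $f^\sim$, uniqueness, and the verification of \eqref{2.110}, \eqref{2.120} (via $N_1\oplus N_2 = (S_1\oplus S_2)^{-1}(M_1\oplus M_2)(S_1\oplus S_2)$), and \eqref{2.130} (via $t^{-1}Nt = (St)^{-1}M(St)$) --- coincides with the paper's proof and is fine, modulo the sketched ``tensor-identity bookkeeping.'' The repair is simply to delete the path/block discussion and invoke \eqref{2.130} directly at the well-definedness step.
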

\begin{proof}
Fix $N \in D^\sim \cap \mn^d$. By Proposition \ref{prop2.20} there exists $M \in D \cap \mn^d$ and invertible $s\in \mn$ such that $N = s^{-1}Ms$. We define
\be\label{2.140}
f^\sim(N) = (s^{-1} \otimes \idk)f(M)(s \otimes \idh)
\ee
We need to prove two things: that $f^\sim$ is well defined, and that $f^\sim$ is an $\lhk$-valued nc-function.

To see that $f^\sim$ is well defined, fix $N \in D^\sim \cap \mn^d$ and then choose $M_1,M_2 \in D \cap \mn^d$ and invertible $s_1,s_2 \in \mn$ with $s_1^{-1}M_1s_1 = N$ and $ s_2^{-1} M_2s_2 =N.$ If we set $s=s_1s_2^{-1}$, then as $s^{-1}M_1s = M_2 \in D$, it follows from \eqref{2.130}, that
$$ f(s^{-1}M_1 s) = (s^{-1}\otimes \idk)f(M_1)( s \otimes \idh).$$
Hence,
\beq
(s_1^{-1} \otimes \idk)f(M_1)(s_1 \otimes \idh) &\=&
(s_2^{-1} \otimes \idk)(s^{-1} \otimes \idk)f(M_1)(s \otimes \idh)(s_2 \otimes \idh)\\ 
&=&(s_2^{-1} \otimes \idk)f(M_2)(s_2 \otimes \idh).
\eeq
This proves that $f^\sim$ is well defined.

To see that $f^\sim$ is an $\lhk$-valued nc-function on $D^\sim$, note first that \eqref{2.110} follows immediately from \eqref{2.140}. To prove \eqref{2.120} fix $N_1 \in D^\sim \cap \m_{n_1}^d$ and $N_2 \in D^\sim \cap \m_{n_2}^d$. Choose $M_1 \in D \cap \m_{n_1}^d$, $N_2 \in D^\sim \cap \m_{n_2}^d$, $s_1 \in \mathcal{I}_{n_1}$, and $s_2 \in \mathcal{I}_{n_2}$ such that $N_1 = s_1^{-1}M_1s_1$ and $N_2 = s_2^{-1}M_2s_2$. Then, as
$$N_1 \oplus N_2 = (s_1 \oplus s_2)^{-1}(M_1 \oplus M_2)(s_1 \oplus s_2),$$
 and $M_1 \oplus M_2 \in D$, we have using \eqref{2.140} that
\begin{align*}
f^\sim(N_1 \oplus N_2) &=\big((s_1 \oplus s_2)^{-1} \otimes \idk \big)f(M_1 \oplus M_2)\big((s_1 \oplus s_2) \otimes \idh \big)\\ 
&=\big((s_1^{-1} \otimes \idk) \oplus  (s_2^{-1} \otimes \idk)\big)
\big(f(M_1) \oplus f(M_2)\big)\big((s_1 \otimes \idh) \oplus  (s_2 \otimes \idh)\big)\\ 
&=\big((s_1^{-1} \otimes \idk) f(M_1)(s_1 \otimes \idh)\big) \oplus
\big((s_2^{-1} \otimes \idk) f(M_2)(s_2 \otimes \idh)\big)\\ 
&=f^\sim(N_1) \oplus f^\sim(N_2).
\end{align*}
This proves \eqref{2.120}.

Finally, to prove \eqref{2.130}, fix $N \in D^\sim \cap \mn^d$ and $s \in \invn$. Choose $M \in D \cap \mn^d$ and $t \in \invn$ such that $N = t^{-1}Mt$. Then, as $s^{-1}Ns = (ts)^{-1}M(ts)$,
\begin{align*}
f^\sim(s^{-1}Ns)&= ((ts)^{-1} \otimes \idk)f(M)((ts) \otimes \idh)\\
&=(s^{-1} \otimes \idk)\big((t^{-1} \otimes \idk)f(M)(t \otimes \idh)\big)(s \otimes \idh)\\
&=(s^{-1} \otimes \idk)f^\sim(N)(s \otimes \idh).
\end{align*}
This proves \eqref{2.130}.
\end{proof}

More generally, when $D \subseteq \m^d$ is an nc-domain and $f \in \nc_{\lhk}(D)$, it is possible to extend $f$ in the following way. If $\vspace$ is an $n$-dimensional vector space, $T$ is a d-tuple of linear transformations on $\vspace$ and there exists an invertible linear map $S:\vspace \to \c^n$ such that
\[
STS^{-1} = (ST^1S^{-1},\ldots,ST^d S^{-1}) \in D \cap \mn^d,
\]
then define $f^{\sss}:\vspace \otimes \h \to \vspace \otimes \k$ by the formula,
\be
\label{bx1}
f^{\sss}(T) = (S^{-1}\otimes \idk) f(STS^{-1})(S\otimes \idh).
\ee
 It is straightforward to check that with this definition $f^{\sss}$ is well defined on $D^{\sss}$, the set of all linear transformations on finite dimensional vector spaces that are similar to an element of $D$, and that the appropriate analogs of \eqref{2.110}, \eqref{2.120}, and \eqref{2.130} hold.

Note to the reader: if $f \in \nc_{\lhk}(D)$, we can apply $f$ to $d$-tuples of matrices on $\cn$;
we can apply $f^{\sss}$ to $d$-tuples of linear transformations on any
 finite dimensional vector space.

We close this section with the following useful lemmas. Both are simple modifications of results from \cite{hkm11b}.
\begin{lem}\label{lem2.20}(cf. Lemma 2.6 in \cite{hkm11b}). 
Let $D$ be an nc-domain in $\m^d$, let $\h$ and $\k$ be Hilbert spaces, and let $f$ be an $\lhk$-valued nc-function on $D$. Fix $n \ge 1$ and $C \in \m_n$. If $M,N \in D \cap \mn^d$ and
\be\label{2.150}
\begin{bmatrix}N&NC-CM\\ 0&M\end{bmatrix} \in D \cap \m_{2n}^d,
\ee
then
\be\label{2.160}
f(\begin{bmatrix}N&NC-CM\\ 0&M\end{bmatrix}) = \begin{bmatrix}f(N)&f(N)C-Cf(M)\\ 0&f(M)\end{bmatrix}.
\ee
\end{lem}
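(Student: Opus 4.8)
The plan is to exploit the two defining properties of an nc-function---respect for direct sums and for conjugation by invertibles---applied to a cleverly chosen similarity. First I would observe that the upper-triangular matrix
\[
R \= \begin{bmatrix}N&NC-CM\\ 0&M\end{bmatrix}
\]
is similar to the block-diagonal matrix $N\oplus M$; indeed, conjugating $N\oplus M$ by the invertible $S = \begin{bmatrix} I & C\\ 0 & I\end{bmatrix}$ (whose inverse is $\begin{bmatrix} I & -C\\ 0 & I\end{bmatrix}$) gives, entrywise in each of the $d$ coordinates,
\[
S^{-1}(N^r \oplus M^r)S \= \begin{bmatrix} N^r & N^r C - C M^r \\ 0 & M^r \end{bmatrix},
\]
i.e. $S^{-1}(N\oplus M)S = R$. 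So the hypothesis \eqref{2.150} is precisely that $R = S^{-1}(N\oplus M)S$ lies in $D\cap\m_{2n}^d$.

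The difficulty is that we cannot directly apply property \eqref{2.130} to conclude $f(R) = (S^{-1}\otimes \idk)f(N\oplus M)(S\otimes\idh)$, because that property requires \emph{both} $N\oplus M$ and $S^{-1}(N\oplus M)S$ to lie in $D$, and a priori we only know $R\in D$; there is no reason $N\oplus M$ itself is in $D$ (though $N, M$ individually are). This is where Proposition~\ref{prop2.40} and the extension $f^\sim$ to the envelope $D^\sim$ enter. Since $N\oplus M \in D$ (by the nc-set property, as $N, M\in D\cap\mn^d$)---wait, more carefully: $N$ and $M$ are both in $D\cap\mn^d$, so $N\oplus M \in D\cap\m_{2n}^d$ by the direct-sum axiom for nc-sets. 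Hence in fact \emph{both} $N\oplus M$ and $R = S^{-1}(N\oplus M)S$ lie in $D$, and we may apply \eqref{2.130} directly to $f$ with the similarity $S$:
\[
f(R) \= f\big(S^{-1}(N\oplus M)S\big) \= (S^{-1}\otimes\idk)\, f(N\oplus M)\,(S\otimes\idh).
\]

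To finish, I would compute the right-hand side explicitly. By the direct-sum property \eqref{2.120}, $f(N\oplus M) = f(N)\oplus f(M)$, which as an element of $\l(\c^{2n}\otimes\h,\c^{2n}\otimes\k)$ is the block-diagonal operator $\begin{bmatrix} f(N) & 0 \\ 0 & f(M)\end{bmatrix}$. Then $S\otimes\idh$ acts as $\begin{bmatrix} I & C\otimes\idh \\ 0 & I\end{bmatrix}$ and $S^{-1}\otimes\idk$ as $\begin{bmatrix} I & -C\otimes\idk \\ 0 & I\end{bmatrix}$, so multiplying the three block $2\times 2$ operator matrices together yields
\[
(S^{-1}\otimes\idk)\begin{bmatrix} f(N) & 0 \\ 0 & f(M)\end{bmatrix}(S\otimes\idh) \= \begin{bmatrix} f(N) & f(N)C - Cf(M) \\ 0 & f(M)\end{bmatrix},
\]
which is exactly \eqref{2.160}. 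The main obstacle is really the bookkeeping check that $S^{-1}(N\oplus M)S$ equals $R$ coordinatewise and that the tensoring with $\idh$, $\idk$ interacts correctly with the block structure (i.e. that $C$ gets tensored appropriately on each side); once one is careful that $N\oplus M \in D$ so that \eqref{2.130} applies verbatim without needing the envelope, the argument is a short computation. I would present it in that order: (i) identify $R$ as a conjugate of $N\oplus M$; (ii) note $N\oplus M\in D$ and invoke \eqref{2.130}; (iii) expand using \eqref{2.120} and multiply out the block matrices.
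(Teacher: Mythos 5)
Your proposal is correct and follows exactly the paper's argument: conjugate $N\oplus M$ by $S=\begin{bmatrix}I&C\\0&I\end{bmatrix}$, note that $N\oplus M\in D$ by the direct-sum axiom for nc-sets so that \eqref{2.130} applies, and multiply out the block matrices. The brief detour through the envelope $D^\sim$ is unnecessary, as you yourself observe, and the rest matches the paper's proof step for step.
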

\begin{proof}
Let
$$s = \begin{bmatrix}\idcn&C\\0&\idcn\end{bmatrix}$$
so that
$$\begin{bmatrix}N&NC-CM\\ 0&M\end{bmatrix} = s^{-1}\begin{bmatrix}N&0\\ 0&M\end{bmatrix}s.$$
Using \eqref{2.120} and \eqref{2.130},
\begin{align*}
\lefteqn{
f(\begin{bmatrix}N&NC-CM\\ 0&M\end{bmatrix}) = f(s^{-1}(N \oplus M)s)}\\ \\
&=(s^{-1}\otimes \idk)(f(N) \oplus f(M))(s \otimes \idh)\\ \\
&=\begin{bmatrix}\idcn \otimes \idk&-C \otimes \idk\\0&\idcn \otimes \idk\end{bmatrix}\begin{bmatrix}f(N)&0\\ 0&f(M)\end{bmatrix} \begin{bmatrix}\idcn \otimes \idh&C \otimes \idh\\0&\idcn \otimes \idh\end{bmatrix}\\ \\
&=\begin{bmatrix}f(N)&f(N)C-Cf(M)\\ 0&f(M)\end{bmatrix}.
\end{align*}
\end{proof}
\begin{lem}\label{lem2.30}(cf. Proposition 2.2 in \cite{hkm11b}). 
Let $D$ be an nc-domain, let $\h$ and $\k$ be Hilbert spaces, and let $f$ be an $\lhk$-valued nc-function on $D$. Let $\calv$ and $\mathcal{W}$ be vector spaces, and let $R:\calv \to \calv$, $T:\mathcal{W} \to \mathcal{W}$, and $L:\calv \to \mathcal{W}$ be linear transformations. If $R,T \in D^{\sss}$ and 
\[
TL = LR,
\]
then
\[
f^{\sss} (T) ( L \otimes \idh) \= (L \otimes \idk) f^{\sss} (R).
\] 
\end{lem}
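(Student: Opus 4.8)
The plan is to reduce Lemma~\ref{lem2.30} to Lemma~\ref{lem2.20} by an ``upper triangular'' trick, essentially the same device used in the proof of Lemma~\ref{lem2.20} but now carried out with $f^{\sss}$ on finite-dimensional vector spaces rather than with $f$ on $\mn^d$. First I would reduce to the case where $\calv$ and $\mathcal{W}$ are finite-dimensional: indeed, for Lemma~\ref{lem2.30} to make sense we need $R \in D^{\sss}$ and $T \in D^{\sss}$, which forces $\calv$ and $\mathcal{W}$ to be finite-dimensional (an element of $D^{\sss}$ is a $d$-tuple of linear transformations on a finite-dimensional space that is similar to an element of $D$). So say $\dim \calv = n_1$ and $\dim \mathcal{W} = n_2$.

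Next I would form the space $\mathcal{W} \oplus \calv$ and the $d$-tuple $X$ of linear transformations on it given, blockwise, by
\[
X \= \begin{bmatrix} T & TL - L R \\ 0 & R \end{bmatrix} \= \begin{bmatrix} T & 0 \\ 0 & R \end{bmatrix},
\]
the second equality holding precisely because $TL = LR$, so $TL - LR = 0$. The point of writing it in the first form is that
\[
\begin{bmatrix} T & 0 \\ 0 & R \end{bmatrix} \= s^{-1} \begin{bmatrix} T & 0 \\ 0 & R \end{bmatrix} s
\]
trivially, but more usefully $X$ is similar to $T \oplus R$ via the ``unipotent'' similarity $s = \begin{bmatrix} \idd & L \\ 0 & \idd \end{bmatrix}: \mathcal{W} \oplus \calv \to \mathcal{W} \oplus \calv$, since $TL - LR = 0$ means $s^{-1}(T \oplus R)s = \begin{bmatrix} T & TL-LR \\ 0 & R\end{bmatrix} = X$. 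Because $T, R \in D^{\sss}$, their direct sum $T \oplus R$ is in $D^{\sss}$ (the analog of \eqref{2.120}/direct-sum closure holds for $D^{\sss}$), and hence $X \in D^{\sss}$. Now apply the similarity-covariance property of $f^{\sss}$ (the analog of \eqref{2.130} on $D^{\sss}$, recorded in the paragraph around \eqref{bx1}) together with the direct-sum property (analog of \eqref{2.120}):
\[
f^{\sss}(X) \= (s^{-1} \otimes \idk)\,\big(f^{\sss}(T) \oplus f^{\sss}(R)\big)\,(s \otimes \idh).
\]
Expanding the right-hand side blockwise, exactly as in the computation inside the proof of Lemma~\ref{lem2.20}, gives
\[
f^{\sss}(X) \= \begin{bmatrix} f^{\sss}(T) & f^{\sss}(T)(L \otimes \idh) - (L \otimes \idk) f^{\sss}(R) \\ 0 & f^{\sss}(R) \end{bmatrix}.
\]
On the other hand, $X = T \oplus R$ as a $d$-tuple (the off-diagonal block is zero!), so $f^{\sss}(X) = f^{\sss}(T) \oplus f^{\sss}(R)$ by the direct-sum property again, which is block-diagonal. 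Comparing the two expressions for $f^{\sss}(X)$, the $(1,2)$ block of the upper-triangular form must vanish, i.e. $f^{\sss}(T)(L \otimes \idh) = (L \otimes \idk) f^{\sss}(R)$, which is exactly the claimed identity.

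The only subtlety — and the step I would be most careful about — is the very first one: making sure that $f^{\sss}$ really is defined on all the $d$-tuples of linear transformations that arise ($T$, $R$, $T\oplus R$, and $X$), and that the analogs of \eqref{2.110}--\eqref{2.130} genuinely hold on $D^{\sss}$. This is asserted in the paragraph following \eqref{bx1} (``It is straightforward to check that\ldots''), so I would just invoke it; the remaining work is the purely formal block-matrix bookkeeping, identical to what appears in Lemma~\ref{lem2.20}, and presents no real obstacle. One should also note that since $L$ intertwines $R$ and $T$, the map $s$ above is genuinely invertible on $\mathcal{W} \oplus \calv$ (it is unipotent upper-triangular), so $X \in D^{\sss}$ follows cleanly from $T \oplus R \in D^{\sss}$.
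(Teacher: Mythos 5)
Your proof is correct and is essentially the paper's own argument: the paper also takes $s = \begin{bmatrix}\id{\mathcal{W}}&L\\0&\id{\mathcal{V}}\end{bmatrix}$, observes that $s^{-1}(T\oplus R)s = \begin{bmatrix}T&TL-LR\\0&R\end{bmatrix} = T\oplus R$ because $TL=LR$, and reads off the claimed intertwining from the $(1,2)$ block of the similarity-covariance identity for $f^{\sss}$. You have simply written out the block computation and the appeal to the properties of $f^{\sss}$ on $D^{\sss}$ in more detail than the paper does.
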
  
\begin{proof}
Let 
$\displaystyle s = \begin{bmatrix}\id{\mathcal{W}}&L\\0&\id{\mathcal{V}}\end{bmatrix}$
and use
\[
f^{\sss} ( s^{-1} 
 \begin{bmatrix}T&0\\0&R\end{bmatrix}
s ) \=
s^{-1} f^{\sss} (
 \begin{bmatrix}T&0\\0&R\end{bmatrix} )
s .
\]
\end{proof}

\section{Local Boundedness and Holomorphicity}
\label{seclb}

In this section we shall prove that locally bounded nc-functions are automatically holomorphic. In addition we shall lay out various tools involving locally bounded and holomorphic graded functions (not necessarily assumed to be nc-functions) that will be heavily used in the sequel.
Most of the content of this section also appears in \cite[Chapter 7]{kvv12}.

If $D$ is an nc-domain in $\m^d$ and $\h$ and $\k$ are Hilbert spaces, then we say a function $f$ defined on $D$ is a \emph{graded $\lhk$-valued function on $D$} if
\be\label{3.10}
\forall_n\ \forall_{x \in D \cap \mn^d}\ f(x) \in \l(\c^n \otimes \h,\c^n \otimes \k).
\ee
\begin{defin}\label{def3.10}
Let $D$ be an nc-domain in $\m^d$ and let $\h$ and $\k$ be Hilbert spaces. We say that a graded $\lhk$-valued function on $D$ is locally bounded if for each $n \ge 1$ and each $x \in D \cap \mn^d$, there exists $r>0$ such that $\b{x}{r} \subseteq D$ and
$$\sup_{y \in \b{x}{r}}\norm{f(x)} < \infty.$$
If $\f$ is a collection of graded $\lhk$-valued functions on $D$, we say that $\f$ is locally uniformly bounded if for each $n \ge 1$ and each $x \in D \cap \mn^d$, there exists $r>0$ such that $\b{x}{r} \subseteq D$ and
$$\sup_{y \in \b{x}{r}}\sup_{f \in \f}\norm{f(x)} < \infty.$$
\end{defin}
\begin{prop}\label{prop3.x20}
Let $D$ be an nc-domain in $\m^d$, let $\h$ and $\k$ be Hilbert spaces, and let $f$ be an $\lhk$-valued nc-function on $D$. If $f$ is locally bounded on $D   $, then $f^\sim$ is locally bounded on $D^\sim$. If $\f$ is a locally uniformly bounded collection of graded $\lhk$-valued functions on $D$, then $\f^\sim$ is a locally uniformly bounded collection of graded $\lhk$-valued functions on $D^\sim$.
\end{prop}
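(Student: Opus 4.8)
The plan is to transport local boundedness along the formula \eqref{2.140} that defines $f^\sim$, using the fact (Proposition~\ref{prop2.30}) that $D^\sim$ is already an nc-domain and the explicit description of points of $D^\sim$ from Proposition~\ref{prop2.20}. First I would fix $n\ge 1$ and a point $N \in D^\sim \cap \mn^d$. By Proposition~\ref{prop2.20} there exist $M \in D \cap \mn^d$ and $s \in \invn$ with $N = s^{-1}Ms$, and by hypothesis there is $r>0$ with $\b{M}{r}\subseteq D$ and $\sup_{y\in\b{M}{r}}\norm{f(y)}=:C<\infty$. The idea is that the similarity map $\Delta \mapsto s\Delta s^{-1}$ is a homeomorphism, so it carries a small ball around $N$ into $\b{M}{r}$: choose $r' > 0$ so small that $\norm{s\Delta s^{-1}} < r$ whenever $\norm{\Delta} < r'$, which is possible since $\norm{s \Delta s^{-1}} \le \norm{s}\,\norm{\Delta}\,\norm{s^{-1}}$; one may take $r' = r / (\norm{s}\,\norm{s^{-1}})$. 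Then for $N' \in \b{N}{r'}$ we have $N' = s^{-1}(M + s(N'-N)s^{-1})s$ with $M + s(N'-N)s^{-1} \in \b{M}{r} \subseteq D$, hence $N' \in D^\sim$, so $\b{N}{r'} \subseteq D^\sim$.

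Next I would bound $\norm{f^\sim(N')}$ on this ball. Writing $M' := M + s(N'-N)s^{-1} \in \b{M}{r}$ we have $N' = s^{-1}M's$, so by \eqref{2.140} (well-definedness of $f^\sim$, already established in Proposition~\ref{prop2.40})
\[
f^\sim(N') = (s^{-1}\otimes\idk)\,f(M')\,(s\otimes\idh),
\]
and therefore
\[
\norm{f^\sim(N')} \le \norm{s^{-1}\otimes\idk}\,\norm{f(M')}\,\norm{s\otimes\idh}
\le \norm{s^{-1}}\,\norm{s}\,C.
\]
Since the right-hand side is independent of $N' \in \b{N}{r'}$, this shows $\sup_{N'\in\b{N}{r'}}\norm{f^\sim(N')} < \infty$, proving that $f^\sim$ is locally bounded on $D^\sim$.

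For the statement about families, the same argument goes through verbatim: given a locally uniformly bounded collection $\f$ and a point $N = s^{-1}Ms \in D^\sim \cap \mn^d$, pick $r>0$ with $\b{M}{r}\subseteq D$ and $\sup_{y\in\b{M}{r}}\sup_{f\in\f}\norm{f(y)} = C < \infty$, take the same $r' = r/(\norm{s}\,\norm{s^{-1}})$, and observe that for every $f \in \f$ and every $N'\in\b{N}{r'}$ the bound $\norm{f^\sim(N')} \le \norm{s}\,\norm{s^{-1}}\,C$ holds with a constant uniform in both $N'$ and $f$. There is essentially no obstacle here: the only point requiring a little care is that the choice of $M$, $s$, and $r'$ depends on $N$ (and the radius shrinks by the condition number of $s$), but that is all that the definition of local (uniform) boundedness requires, since we are allowed a different radius at each point. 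One should note that $\h$ and $\k$ are arbitrary Hilbert spaces here, but the operator-norm estimates $\norm{s^{-1}\otimes\idk} = \norm{s^{-1}}$ and $\norm{s\otimes\idh} = \norm{s}$ are valid regardless of dimension, so no finite-dimensionality is needed.
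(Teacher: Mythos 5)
Your proposal is correct and is exactly the intended argument: the paper states Proposition~\ref{prop3.x20} without proof, and the natural proof is precisely what you give --- transport a bounding ball $\b{M}{r}$ through the similarity $N=s^{-1}Ms$ exactly as in the proof of Proposition~\ref{prop2.30}, then estimate $\norm{f^\sim(N')}\le\norm{s}\,\norm{s^{-1}}\,C$ via the defining formula \eqref{2.140}. The only caveat (inherited from the paper's own phrasing, not a defect of your argument) is that for the second assertion the members of $\f$ are merely graded, so one must either assume they are nc so that $f^\sim$ is well defined, or note that your bound holds uniformly over any consistent choice of extension.
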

We view $\m^d = \cup_n \mn^d$ as being endowed with the disjoint union topology, i.e., $G \subseteq \m^d$ is open if and only if $G\cap \mn^d$ is open for each $n \ge 1$.  If $K \subseteq \m^d$ is a compact set in this topology, then as $\mn^d$ is open for each $n \ge 1$ and $K \subseteq \cup_n \mn^d$, it follows that there exists $n \ge 1$ such that
\[
K \subseteq \bigcup_{m=1}^n \m_{m}^d.
\]

Fix an nc-domain $D \subseteq \m^d$. By a \emph{compact-open exhaustion of $D$} we mean a sequence of compact subsets of $D$, $\langle K_m \rangle$, satisfying $K_m \subseteq K_{m+1}^\circ$ for all $m \ge 1$ and such that
\[
D = \bigcup_{m=1}^\infty K_m.
\]
A particularly simple way to construct a compact-open exhaustion of $D$ is to note that as $D \cap \mn^d$ is an open subset of $\mn^d$ for each $n \ge 1$, for each $n$ there exists a compact-open exhaustion, $\langle K_{n\, m} \rangle$,  of $D \cap \mn^d$. It follows that if $K_m$ is defined by
\[
K_m = \bigcup_{n=1}^m K_{n \, m},
\]
then $\langle K_m \rangle$ is a compact-open exhaustion of $D$.
In the sequel notions introduced using a compact-open exhaustion of $D$ can in each case shown to be independent of the particular choice of exhaustion. Also, for convenience we assume that the exhaustion has been chosen to satisfy the property that
\[
\forall_m\ K_m \subseteq \bigcup_{n=1}^m D \cap \mn^d.
\]

 Now let $D \subseteq \m^d$ be an nc-domain and let $\langle K_m \rangle$ be a compact-open exhaustion of $D$. If $\lb_{\lhk}(D)$ denotes the space of locally bounded graded $\lhk$-valued functions on $D$, then for $f \in \lb_{\lhk}(D)$,
 \[
 \rho_m(f) \stackrel{\text{def}}{=}\sup_{x \in K_m} \norm{f(x)} < \infty
 \]
 for each $m\ge 1$. It follows that $d:\lb_{\lhk}(D) \times \lb_{\lhk}(D) \to \r$ defined by
 \be\label{3.15}
 d(f,g) = \sum_{m=1}^\infty 2^{-m} \frac{\rho_m(f-g)}{1+ \rho_m(f-g)}
 \ee
 is a translation invariant metric on $\lb_{\lhk}(D)$. If $f \in \lb_{\lhk}(D)$ and $\langle f^(k)\rangle$ is a sequence in $\lb_{\lhk}(D)$ we shall write $f^(k) \to f$ if $d(f,f^{(k)}) \to 0$.
\begin{defin}\label{def3.20}
Let $D$ be an nc-domain in $\m^d$ and let $\h$ and $\k$ be Hilbert spaces. Let $f$ be a graded $\lhk$-valued function on $D$. We say that $f$ is holomorphic on D if for each $n \ge 1$, $x \to f(x)$ is an holomorphic $\lhk$-valued function in the entries of $x$.
\end{defin}
An important tool (Proposition \ref{prop3.30} below) that we shall use frequently in the sequel is based on the application of Montel's Theorem to uniformly locally bounded sequences of graded holomorphic $\lhk$-valued nc-functions. Unfortunately, in the cases when either $\h$ or $\k$ is infinite dimensional, the topology induced by the metric defined in \eqref{3.15} is too strong for this purpose. Accordingly, we define the following notion of \emph{weak convergence}.

Let $\langle K_m\rangle$ be a compact-open exhaustion of an nc-domain $D$  as above. If $\langle f^{(k)}\rangle$ is a sequence of graded $\lhk$-valued functions on $D$ and $f$ is a graded $\lhk$-valued function on $D$, we say that $f^{(k)} \stackrel{\text{wk}}{\to} f$ if for each $m,n \ge 1$ such that $K_m \cap \mn^d \ne \emptyset$, for each $c,d \in \cn$, and for each $h \in \h$ and $k \in \k$ we have that
\[
\lim_{k \to \infty}\ \sup_{x \in K_m\cap \mn^d}\  \langle (f^{(k)}(x) - f(x))c\otimes h ,d\otimes k\rangle = 0.
\]
\begin{prop}\label{prop3.30}
Let $D$ be an nc-domain and let $\langle f^{(k)}\rangle$ be a uniformly locally bounded sequence of graded holomorphic $\lhk$-valued functions on $D$. Then there exists a subsequence $\langle f^{(k_j)}\rangle$ and a graded holomorphic $\lhk$-valued function $f$ on $D$ such that $ f^{(k_j)} \wkto f$.
\end{prop}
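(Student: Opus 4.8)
The plan is to combine a standard diagonal argument with the Montel theorem in finitely many variables, carefully managing the fact that $\h$ and $\k$ may be infinite dimensional. Recall that for each fixed $m,n$, the restriction $f^{(k)}|_{K_m\cap\mn^d}$ is a sequence of $\l(\cn\otimes\h,\cn\otimes\k)$-valued holomorphic functions on the open set $D\cap\mn^d$ which is uniformly bounded in norm on $K_m\cap\mn^d$ (shrink slightly so that a compact neighborhood of $K_m\cap\mn^d$ lies in $D\cap\mn^d$, using uniform local boundedness). First I would fix a countable dense subset $\{c_i\otimes h_i\}$ of the algebraic tensor product in $\cn\otimes\h$ and similarly $\{d_j\otimes k_j\}$ in $\cn\otimes\k$, for every $n$; there are only countably many pairs $(m,n,i,j)$ to consider. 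For each such tuple the scalar-valued functions $x\mapsto \langle f^{(k)}(x)\,c_i\otimes h_i, d_j\otimes k_j\rangle$ are holomorphic on $D\cap\mn^d$ and bounded by $\rho\cdot\|c_i\otimes h_i\|\,\|d_j\otimes k_j\|$ on (a neighborhood of) $K_m\cap\mn^d$, so by the classical Montel theorem and a diagonalization over all countably many tuples, I obtain a single subsequence $\langle f^{(k_\ell)}\rangle$ such that $\langle f^{(k_\ell)}(x)\,c_i\otimes h_i, d_j\otimes k_j\rangle$ converges uniformly on each $K_m\cap\mn^d$.

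Next I would upgrade this to convergence of $\langle f^{(k_\ell)}(x)\,c\otimes h, d\otimes k\rangle$ for \emph{all} $c\otimes h,\ d\otimes k$: an $\varepsilon/3$ argument using the uniform norm bound $\rho_m$ and density of the chosen vectors does this, since $\langle f^{(k_\ell)}(x)\xi,\eta\rangle$ is controlled by $\rho_m\|\xi-\xi'\|\,\|\eta\|$ etc. Then for each $n$ and each $x\in D\cap\mn^d$ I define $f(x)\in\l(\cn\otimes\h,\cn\otimes\k)$ by specifying the sesquilinear form $B_x(\xi,\eta)=\lim_\ell \langle f^{(k_\ell)}(x)\xi,\eta\rangle$; this form is bounded with norm $\le \rho_m$ on a neighborhood of any $K_m$, hence by Riesz representation gives a bounded operator $f(x)$, and $f$ is a graded $\lhk$-valued function. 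By construction $f^{(k_\ell)}\wkto f$.

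It remains to check $f$ is holomorphic. Here I would invoke Definition~\ref{def3.20}: holomorphicity of an $\lhk$-valued function means holomorphicity of $x\mapsto f(x)$ as a Banach-space-valued function of the matrix entries on each $D\cap\mn^d$. For the scalar-valued ``matrix coefficient'' functions $x\mapsto \langle f(x)\xi,\eta\rangle$ we get holomorphicity for free, since each is a locally uniform limit of the holomorphic functions $\langle f^{(k_\ell)}(x)\xi,\eta\rangle$ (uniform convergence on $K_m$, then on arbitrary compact subsets of $D\cap\mn^d$, yields a holomorphic limit by Weierstrass/Montel). Since $f$ is also locally bounded in operator norm, a standard theorem (holomorphy of a locally bounded Banach-valued map is equivalent to weak, i.e. scalar-coefficient, holomorphy) then gives that $x\mapsto f(x)$ is holomorphic into $\l(\cn\otimes\h,\cn\otimes\k)$, as required.

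The main obstacle is the infinite-dimensionality of $\h$ and $\k$: one cannot directly diagonalize the full operator-valued sequence because the norm topology is too strong (as the paper explicitly notes), so the care is all in choosing a \emph{countable} separating family of vectors, showing the resulting weak limit is genuinely a bounded operator with the right norm bounds, and confirming that weak-coefficient holomorphy plus local boundedness suffices to conclude operator-valued holomorphy in the sense of Definition~\ref{def3.20}. The finite-dimensionality of each $\cn$ means there are only countably many pairs $(n,i,j)$ in play, which is exactly what makes the diagonal argument go through.
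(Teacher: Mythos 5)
Your proposal is correct and follows essentially the same route as the paper: reduce to the countably many scalar matrix--coefficient functions, apply the classical Montel theorem with a diagonal extraction over all tuples $(m,n,i,j)$, and reassemble the weak limit into a bounded operator using the uniform local bound. The only differences are organizational (the paper diagonalizes in two stages and works with fixed orthonormal bases of $\h$ and $\k$, while you use a countable dense family of elementary tensors and are more explicit about the $\varepsilon/3$ upgrade and the weak-holomorphy-plus-local-boundedness step), so no further comparison is needed.
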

\begin{proof}
The proof will proceed by doing a diagonal subsequence argument twice. First fix $m$ and $n$ such that $K_m \cap \mn^d \ne \emptyset$. Let $\{e_i\}$ denote the standard orthonormal basis for $\cn$ and fix orthonormal bases $\{h_l\}$ and $\{k_l\}$ for $\h$ and $\k$. For each $i_1,i_2 \le n$ and each $l_1$ and $l_2$,
\[
\langle f^{(k)}(x)\ c_{i_1} \otimes h_{l_1}, c_{i_2}\otimes k_{l_2}\rangle
\]
is uniformly bounded on a neighborhood of $K_m \cap \mn^d$. Therefore using Montel's Theorem and mathematical induction, for each $N\ge 1$, there exist an increasing sequence of integers $\langle k_{N,j}\rangle$ and holomorphic functions $g_{i_1,l_1,i_2,l_2}^N$ defined on a neighborhood of $K_m \cap \mn^d$ such that
\[
\langle f^{(k_{N,j})}(x)\ c_{i_1} \otimes h_{l_1}, c_{i_2}\otimes k_{l_2}\rangle \to g_{i_1,l_1,i_2,l_2}^N(x)
\]
 uniformly on a neighborhood of $K_m \cap \mn^d$ for all $i_1,i_2 \le n$ and $l_1,l_2 \le N$ and with the additional property that $\langle k_{N+1,j}\rangle$ is a subsequence of $\langle k_{N,j}\rangle$ for each $N$. Hence, there exist holomorphic functions $g_{i_1,l_1,i_2,l_2}$ defined on a neighborhood of $K_m \cap \mn^d$ such that
 \[
\langle f^{(k_{N,N})}(x)\ c_{i_1} \otimes h_{l_1}, c_{i_2}\otimes k_{l_2}\rangle \to g_{i_1,l_1,i_2,l_2}(x)
 \]
uniformly on a neighborhood of $K_m \cap \mn^d$ for all $i_1,i_2 \le n$ and $l_1,l_2$.

Summarizing, we have proved the following fact.
\begin{fact}\label{fact3.10}
that if $\langle f^{(k)}\rangle$ is a uniformly bounded sequence of graded holomorphic $\lhk$-valued functions on $D$, then for each $m$ and $n$ such that $K_m \cap \mn^d \ne \emptyset$, there exist a strictly increasing sequence $\langle k_N\rangle$ and holomorphic functions  $g_{i_1,l_1,i_2,l_2}$ defined on a neighborhood of $K_m \cap \mn^d$ such that
 \[
\langle f^{(k_N)}(x)\ c_{i_1} \otimes h_{l_1}, c_{i_2}\otimes k_{l_2}\rangle \to g_{i_1,l_1,i_2,l_2}(x)
 \]
  uniformly on a neighborhood of $K_m \cap \mn^d$ for all $i_1,i_2 \le n$ and $l_1,l_2$.
 \end{fact}
Now fix $n$. For $m \ge n$ we use Fact \ref{fact3.10} to inductively construct an increasing sequence $\langle k_{m,N}\rangle$ and holomorphic functions $g_{i_1,l_1,i_2,l_2}^m$ defined on a neighborhood of $K_m \cap \mn^d$ satisfying
 \be\label{3.20}
\langle f^{(k_{m,N})}(x)\ c_{i_1} \otimes h_{l_1}, c_{i_2}\otimes k_{l_2}\rangle \to g_{i_1,l_1,i_2,l_2}^m(x)
 \ee
 uniformly on a neighborhood of $K_m \cap \mn^d$ for all $i_1,i_2 \le n$ and $l_1,l_2$ and with $\langle k_{m,N}\rangle$ a subsequence of $\langle k_{m+1,N}\rangle$ for each $m$. As $K_m \subseteq K_{m+1}$, it follows from \eqref{3.20} that if $m_1 \le m_2$, then
\[
g_{i_1,l_1,i_2,l_2}^{m_1}(x) = g_{i_1,l_1,i_2,l_2}^{m_2}(x)
\]
on a neighborhood of $K_{m_1} \cap \mn^d$. Therefore, as $\{K_m\}$ is an exhaustion of $D$, we can define an holomorphic function $ g_{i_1,l_1,i_2,l_2}^n:D \cap \mn^d \to \mn$ by the formula
 \be\label{3.22}
 g_{i_1,l_1,i_2,l_2}^n(x) = g_{i_1,l_1,i_2,l_2}^{m}(x) \text{ if } m\ge n \text{ and } x \in D \cap \mn^d.
 \ee
 Now define a graded holomorphic $\lhk$-valued function $f$ on $D$ by requiring that
 \[
 \langle f(x)\ c_{i_1} \otimes h_{l_1}, c_{i_2}\otimes k_{l_2}\rangle =  g_{i_1,l_1,i_2,l_2}^n(x)
 \]
whenever $n \ge 1$, $x \in D \cap \mn^d$, $i_1,i_2 \le n$, $l_1,l_2 \ge1$. By \eqref{3.20} and \eqref{3.22} it follows that
\[
 \langle f^{(k_{m,m})}(x)\ c_{i_1} \otimes h_{l_1}, c_{i_2}\otimes k_{l_2}\rangle \to \langle f(x)\ c_{i_1} \otimes h_{l_1}, c_{i_2}\otimes k_{l_2}\rangle
\]
whenever $n \ge 1$, $x \in D \cap \mn^d$, $i_1,i_2 \le n$, $l_1,l_2 \ge1$. Hence, since  $\langle f^{(k)}\rangle$ is assumed locally bounded it follows that $f(x) \in \l(\c^n\otimes \h,\cn \otimes \k)$ for all $x \in D \cap \mn^d$ and $f^{k_{m,m}} \wkto f$.
\end{proof}
\begin{thm}\label{thm3.10}
Let $D$ be an nc-domain in $\m^d$, let $\h$ and $\k$ be Hilbert spaces, and let $f$ be an $\lhk$-valued nc-function on $D$. If $f$ is locally bounded on $D$, then $f$ is holomorphic on $D$.
\end{thm}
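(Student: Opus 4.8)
The plan is to run the first-order difference-quotient calculus for nc-functions (this is essentially \cite[Chapter 7]{kvv12}) and feed local boundedness through it. By Propositions~\ref{prop2.40}, \ref{prop3.x20} and \ref{prop2.30} we may replace $D$ by $D^\sim$ and $f$ by $f^\sim$, so we may assume $D$ is invariant; since $D\cap\mn^d$ is open in $D^\sim\cap\mn^d$ and restrictions of holomorphic functions are holomorphic, it suffices to show $f^\sim$ is holomorphic on $D^\sim$. Fix $n$ and $x,y\in D\cap\mn^d$. Whenever $z\in\mn^d$ has $\begin{bmatrix}x&z\\0&y\end{bmatrix}\in D$, applying Lemma~\ref{lem2.30} to the two intertwinings $\begin{bmatrix}x&z\\0&y\end{bmatrix}\begin{bmatrix}\idcn\\0\end{bmatrix}=\begin{bmatrix}\idcn\\0\end{bmatrix}x$ and $\begin{bmatrix}0&\idcn\end{bmatrix}\begin{bmatrix}x&z\\0&y\end{bmatrix}=y\begin{bmatrix}0&\idcn\end{bmatrix}$ forces
\[
f\left(\begin{bmatrix}x&z\\0&y\end{bmatrix}\right)=\begin{bmatrix}f(x)&\Delta f(x,y)(z)\\0&f(y)\end{bmatrix}
\]
for a unique $\Delta f(x,y)(z)\in\l(\cn\ot\h,\cn\ot\k)$. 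A routine argument then shows $z\mapsto\Delta f(x,y)(z)$ is $\c$-linear: conjugating $\begin{bmatrix}x&z\\0&y\end{bmatrix}$ by $\mathrm{diag}(\idcn,t\,\idcn)$ and invoking \eqref{2.130} gives $\Delta f(x,y)(tz)=t\,\Delta f(x,y)(z)$, and passing $\begin{bmatrix}x&z_1&z_2\\0&y&0\\0&0&y\end{bmatrix}$ through its (non-invertible) intertwinings with $\begin{bmatrix}x&z_i\\0&y\end{bmatrix}$ and with $\begin{bmatrix}x&z_1+z_2\\0&y\end{bmatrix}$ via Lemma~\ref{lem2.30} gives additivity. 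Since for $x,y$ close $\Delta f(x,y)$ is then defined (and linear) on a ball about $0$, it extends uniquely to a $\c$-linear map on $\mn^d$. Finally, taking $z=x-y$, noting $\begin{bmatrix}x&x-y\\0&y\end{bmatrix}=\begin{bmatrix}\idcn&\idcn\\0&\idcn\end{bmatrix}^{-1}(x\oplus y)\begin{bmatrix}\idcn&\idcn\\0&\idcn\end{bmatrix}$, and computing the left-hand side with \eqref{2.120} and \eqref{2.130}, we get the mean-value identity
\[
f(x)-f(y)=\Delta f(x,y)(x-y),\qquad x,y\in D\cap\mn^d .
\]

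I would next convert local boundedness of $f$ into boundedness of $\Delta f$. Fix $x_0\in D\cap\mn^d$, and choose $r>0$ with $\b{x_0\oplus x_0}{r}\subseteq D$ and $M:=\sup_{\b{x_0\oplus x_0}{r}}\norm{f}<\infty$. If $d(x,x_0)<r/3$, $d(y,x_0)<r/3$ and $\norm{z}<r/3$, then $\begin{bmatrix}x&z\\0&y\end{bmatrix}\in\b{x_0\oplus x_0}{r}$, so, a corner of an operator having norm at most that of the operator, $\norm{\Delta f(x,y)(z)}\le M$; by linearity $\norm{\Delta f(x,y)}\le 3M/r$ throughout this neighbourhood. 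With the mean-value identity this makes $f$ Lipschitz near $x_0$; as $x_0$ and $n$ were arbitrary, $f$ is locally Lipschitz, hence continuous, on every $D\cap\mn^d$. Continuity of $f$ at level $2n$ then makes $(x,y,z)\mapsto\Delta f(x,y)(z)$ jointly continuous near $(x_0,x_0,0)$, so $\Delta f(x,y)(z)\to\Delta f(x_0,x_0)(z)$ as $(x,y)\to(x_0,x_0)$ for each small $z$; since these are $\c$-linear maps on the finite-dimensional $\mn^d$, pointwise convergence on a neighbourhood of $0$ upgrades to $\norm{\Delta f(x,y)-\Delta f(x_0,x_0)}\to 0$.

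For the conclusion I would apply the mean-value identity to the pair $(x_0+z,x_0)$ for small $z$, getting $f(x_0+z)-f(x_0)=\Delta f(x_0+z,x_0)(z)$, hence
\[
\norm{f(x_0+z)-f(x_0)-\Delta f(x_0,x_0)(z)}\le\norm{\Delta f(x_0+z,x_0)-\Delta f(x_0,x_0)}\,\norm{z}=o(\norm{z})
\]
by the previous step. Thus $f$ is complex differentiable at $x_0$ with $\c$-linear differential $\Delta f(x_0,x_0)$; since $x_0$ and $n$ were arbitrary, $f$ is holomorphic on $D$.

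I expect the main obstacle to be the first paragraph: obtaining the block-triangular difference formula and, especially, the $\c$-linearity of $\Delta f(x,y)$ purely from Lemma~\ref{lem2.30}, while bookkeeping which auxiliary block tuples lie in $D$ (those with small off-diagonal entries) versus only in the envelope $D^\sim$ (the $\mathrm{diag}(\idcn,t\,\idcn)$ rescalings and the conjugation by $\begin{bmatrix}\idcn&\idcn\\0&\idcn\end{bmatrix}$), which is precisely why one passes to $D^\sim$ at the start.
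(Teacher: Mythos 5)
Your proof is correct, and at its core it runs on the same engine as the paper's: evaluate $f$ at a $2\times 2$ block upper-triangular point, use the intertwining lemmas (Lemmas~\ref{lem2.20} and \ref{lem2.30}) to identify the corner block, and use local boundedness to control its norm. Where you genuinely diverge is in how holomorphy is extracted at the end. The paper proves continuity by taking the off-diagonal entry to be $(B/\epsilon)(N-M)$ and bounding the corner, then considers the one-parameter family $\begin{bmatrix}M+\lambda E& E\\0&M\end{bmatrix}$ to see that every directional derivative exists, and finally appeals to a modification of \cite[Prop.~2.5]{hkm11b} for the passage from continuity plus existence of directional complex derivatives to holomorphy. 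You instead build the full first-order nc difference operator $\Delta f(x,y)$, prove it is $\c$-linear (the $3\times 3$ block additivity argument and the $\mathrm{diag}(\idcn,t\,\idcn)$ homogeneity argument both check out, and your bookkeeping of which auxiliary points lie only in $D^\sim$ rather than $D$ is exactly the right precaution, justified by Propositions~\ref{prop2.30}, \ref{prop2.40} and \ref{prop3.x20}), derive the mean-value identity $f(x)-f(y)=\Delta f(x,y)(x-y)$ from the unipotent similarity, and conclude Fr\'echet differentiability directly from norm-continuity of $(x,y)\mapsto\Delta f(x,y)$ (which, as you note, follows from pointwise convergence plus finite-dimensionality of $\mn^d$, not of $\h$ or $\k$). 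This costs you the extra work of the linearity and continuity-of-the-differential steps, but buys a self-contained proof that does not lean on the cited Hartogs-type result and exhibits the complex derivative explicitly as $\Delta f(x_0,x_0)$; it is essentially the difference-differential calculus route of \cite{kvv12}. Both arguments are valid.
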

\begin{proof}
The proof will proceed in two steps. We first show that if $f$ is locally bounded, then $f$ is continuous. That $f$ is holomorphic will then follow by a straightforward modification of Proposition 2.5 in \cite{hkm11b}.

Fix $M \in D \cap \mn^d$ and let $\epsilon > 0$. Choose $r>0$ so that
\[
\b{\begin{bmatrix}M&0\\0&M\end{bmatrix}}{r} \subseteq D \cap \m_{2n}^d.
\]
If  $s$ is chosen with $0<s<r$ then as $\b{M\oplus M}{s}^-$ is a compact subset of $D \cap \m_{2n}^d$ and $f$ is assumed locally bounded, there exists a constant $B$ such that
\be\label{3.30}
x\in \b{\begin{bmatrix}M&0\\0&M\end{bmatrix}}{s} \implies \norm{f(x)} < B.
\ee
Choose $\delta$ sufficiently small so that $\delta< \min\{s\epsilon/sB,s/2\}$ and $\b{M}{\delta} \subseteq D$. That $f$ is continuous at $M$ follows from the following claim.
\be\label{3.40}
N \in \b{M}{\delta} \implies f(N) \in \b{f(M)}{\epsilon}.
\ee
To prove the claim fix $N \in \mn^d$ with $\norm{N-M} < \delta$. Then $\norm{N-M} <s/2$ and $\norm{(B/\epsilon)(N-M)} <s/2$. Hence by the triangle inequality,
\[
\norm{\begin{bmatrix}N&c(N-M)\\ 0&M\end{bmatrix}-\begin{bmatrix}M&0\\ 0&M\end{bmatrix} }< s.
\]
Hence, by \eqref{3.30},
\[
\norm{f(\begin{bmatrix}N&(B/\epsilon)(N-M)\\ 0&M\end{bmatrix})}< B.
\]
But $M$, $N$, and $\begin{bmatrix}N&c(N-M)\\ 0&M\end{bmatrix}$ are in $D$, so by Lemma \ref{lem2.20},
\[
f(\begin{bmatrix}N&(B/\epsilon)(N-M)\\ 0&M\end{bmatrix})=\begin{bmatrix}f(N)&(B/\epsilon)(f(N)-f(M))\\ 0&f(M)\end{bmatrix}
\]
In particular, we see that $\norm{(B/\epsilon)(f(N)-f(M))} < B$, or equivalently, $f(N) \in \b{f(M)}{\epsilon}$. This proves \eqref{3.40}

To see that $f$ is holomorphic, fix $M\in D \cap \mn^d$. If $E \in \mn^d$ is selected sufficiently small, then
\[
\begin{bmatrix}M + \lambda E & E \\0&M\end{bmatrix} \in D \cap \m_{2n}^d 
\]
for all sufficiently small $\lambda \in \c$. But
\[
\begin{bmatrix}M + \lambda E & E \\0&M\end{bmatrix}=\begin{bmatrix}M + \lambda E & (1/\lambda)\big((M+\lambda E)-M\big) \\0&M\end{bmatrix}.
\]
Hence, Lemma \ref{2.20} implies that
\be\label{3.50}
f(\begin{bmatrix}M + \lambda E & E \\0&M\end{bmatrix}) =
\begin{bmatrix}f(M + \lambda E) & (1/\lambda)\big(f(M+\lambda E)-f(M)\big) \\0&f(M)\end{bmatrix}.
\ee
As the left hand side of \eqref{3.50} is continuous at $\lambda = 0$, it follows that the 1-2 entry of the right hand side of \eqref{3.50} must converge. As $E$ is arbitrary, this implies that $f$ is holomorphic.
\end{proof}
If $D$ is an nc-domain, we let $H(D)$ denote the collection of locally bounded nc-functions on $D$. In light of Theorem \ref{thm3.10} we refer to the elements of $H(D)$ as \emph{free holomorphic functions}. Likewise, if $\h$ and $\k$ are Hilbert space, we let $H_{\k}(D)$ (resp $H_{\lhk}(D)$) denote the collection of locally bounded $\k$-valued (resp. $\lhk$-valued) nc-functions on $D$.
 \begin{prop}\label{prop3.10}
 Let $D$ be an nc-domain. $H(D)$ equipped with the metric defined in \eqref{3.15} is complete. Furthermore, Montel's Theorem is true, i.e., if $\mathcal{F} \subseteq H(D)$, then $\mathcal{F}$ has compact closure if and only if $\mathcal{F}$ is locally uniformly bounded.
 \end{prop}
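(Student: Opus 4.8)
The plan is to handle the two assertions --- completeness of $(H(D),d)$ and the Montel property --- in turn, in each case reducing to the behaviour of uniformly Cauchy, respectively locally uniformly bounded, families of matrix-valued holomorphic functions, and then checking that the nc-structure survives passage to the limit.

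For completeness, I would begin with a sequence $\langle f^{(k)}\rangle$ that is Cauchy for the metric \eqref{3.15}. Since each summand in \eqref{3.15} is dominated by $d$ and $t\mapsto t/(1+t)$ is a homeomorphism of $[0,\infty)$ onto $[0,1)$, this forces $\rho_m(f^{(j)}-f^{(k)})\to 0$ for each fixed $m$; that is, $\langle f^{(k)}\rangle$ is uniformly Cauchy on each $K_m$. As $\mn$ is complete and $D=\bigcup_m K_m$, there is a graded function $f$ on $D$ with $f^{(k)}\to f$ uniformly on every $K_m$. I would then check $f\in H(D)$: it is locally bounded because each interior point of a $K_m$ has a neighbourhood inside $K_m$ on which the convergence is uniform; it satisfies the direct-sum and similarity identities of Definition~\ref{defa3}, since those identities pass to pointwise limits (every point of $D$, and each relevant direct sum and conjugate, lies in some $K_m$); and it is then automatically holomorphic by Theorem~\ref{thm3.10}. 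Finally $d(f^{(k)},f)\to 0$ follows by splitting the series \eqref{3.15} at a large index and using $\rho_m(f^{(j)}-f)\le\limsup_k\rho_m(f^{(j)}-f^{(k)})$.

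For Montel's theorem, the ``only if'' direction is quick: $\rho_m$ is $d$-continuous on $H(D)$ since $|\rho_m(f)-\rho_m(g)|\le\rho_m(f-g)$ and $d$-convergence implies $\rho_m$-convergence, so compactness of $\overline{\mathcal{F}}$ makes $\sup_{f\in\mathcal{F}}\rho_m(f)<\infty$ for every $m$; given $x\in D\cap\mnd$, a ball $\b{x}{r}$ with $\b{x}{r}^-\subseteq D$ has compact closure, hence lies in some $K_M$ (the interiors $K_m^\circ$ increase to $D$), and then $\sup_{y\in\b{x}{r}}\sup_{f\in\mathcal{F}}\norm{f(y)}\le\sup_{f\in\mathcal{F}}\rho_M(f)<\infty$. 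For the ``if'' direction I would prove sequential precompactness: given $\langle f^{(k)}\rangle$ in a locally uniformly bounded $\mathcal{F}$, each $f^{(k)}$ is holomorphic by Theorem~\ref{thm3.10}, so Proposition~\ref{prop3.30} applied with $\h=\k=\c$ yields a subsequence with $f^{(k_j)}\wkto f$ for some graded holomorphic $f$. The point is that for scalar-valued functions $\wkto$ is, by finite-dimensionality of $\mn$, the same as uniform norm convergence on each $K_m\cap\mnd$, hence uniform on each $K_m$; therefore $\rho_m(f^{(k_j)}-f)\to 0$ for all $m$, and so $d(f^{(k_j)},f)\to 0$. As in the completeness argument, uniform-on-compacta convergence imposes the nc-identities on $f$, and local uniform boundedness of $\mathcal{F}$ makes $f$ locally bounded, so $f\in H(D)$ and $\mathcal{F}$ is precompact.

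The step I expect to need the most care is the ``if'' half of Montel: Proposition~\ref{prop3.30} only guarantees that the limit is a graded holomorphic function, so the real work is to observe that in the scalar case weak convergence is strong enough both to recover convergence in the metric $d$ and to carry the direct-sum and similarity identities of Definition~\ref{defa3} over to the limit, which is what places $f$ in $H(D)$ rather than merely in the larger space of graded holomorphic functions. The remaining arguments are routine bookkeeping with the exhaustion $\langle K_m\rangle$ and the formula \eqref{3.15}.
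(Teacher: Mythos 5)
Your proof is correct. The paper in fact states Proposition~\ref{prop3.10} without proof, and your argument supplies exactly the one the authors evidently have in mind: completeness via uniform Cauchyness on each $K_m$ plus the observation that the graded, direct-sum, and similarity identities of Definition~\ref{defa3} survive pointwise limits, and the Montel direction via Theorem~\ref{thm3.10} together with Proposition~\ref{prop3.30}, using finite-dimensionality of $\mn$ to upgrade $\wkto$ to convergence in the metric \eqref{3.15}. You also correctly isolate the only point needing care, namely that the weak limit produced by Proposition~\ref{prop3.30} must be shown to lie in $H(D)$ and not merely in the space of graded holomorphic functions.
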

 As mentioned above, Montel's Theorem is not true for $H_{\lhk}(D)$ when either $\h$ or $\k$ is infinite dimensional. However the following useful fact in many applications can take its place.
 \begin{prop}\label{prop3.20}
 Let $D$ be an nc-domain and let $\h$ and $\k$ be Hilbert spaces. $H_{\lhk}(D)$ equipped with the metric defined in \eqref{3.15} is complete. Furthermore, if $\langle f^{(k)}\rangle$ is a locally bounded sequence in  $H_{\lhk}(D)$, then there exist an increasing sequence $\langle k_j \rangle$ and $f \in H_{\lhk}(D)$ such that $f^{(k_j)} \wkto f$.
 \end{prop} 

\section{Partial nc-Sets and Functions}
\label{secpncs}

Let $\n$ denote the set of positive integers. We say that $E$ is a \emph{partial nc-set of size n} if
\[
E \subseteq \bigcup_{m=1}^n\m_m^d,
\]
$E \cap \m_n^d \not= \emptyset$, and $M_1 \oplus M_2 \in E$ whenever $M_1 \in E \cap \m_{m_1}^d$, $M_2 \in E \cap \m_{m_2}^d$, and $m_1 +m_2 \le n$. We do not require that partial nc-sets are closed with respect to unitary conjugations.

If $E$ is a partial nc-set, then we say that a function $u:E \to \m^1$ is a \emph{partial nc-function} if
\be\label{4.10}
\forall_{m\in \n}\ M \in E \cap \m_m^d \implies u(M) \in \m_m,\qquad \text{and}
\ee
\be\label{4.20}
\forall_{M_1,M_2 \in E}\ M_1\oplus M_2 \in E \implies u(M_1 \oplus M_2)=u(M_1)\oplus u(M_2).
\ee
In a similar fashion we may define $\k$-valued and $\lhk$-valued partial nc-functions.

If $E$ is a partial nc-set and $\s \subseteq \cup_n \invn$, then we say that a function $u:E \to \m^1$ is \emph{$\s$-invariant} if
\[
\forall_{M \in E}\ \forall_{S \in \s}\ S^{-1}MS \in E \implies u(S^{-1}MS) = S^{-1}u(M)S.
\]
In a similar fashion we may define $\k$-valued and $\lhk$-valued $\s$-invariant functions. Note that the definitions of partial nc-function and $\s$-invariant function are rigged in such a way that $\phi|E$ is an $\s$-invariant partial nc-function whenever $D$ is an nc-domain, $\phi$ is an nc-function on $D$,  $\s \subseteq \cup_n \invn$, and $E \subseteq D$ is a partial nc-set.

We say that $M \in \m_n^d$ is \emph{generic} if there do not exist $M_1,M_2 \in \m^d$ and $S \in \invn $ such that $M = S^{-1}(M_1 \oplus M_2)S$. If $E$ is a partial nc-set, we say that \emph{$E$ is complete} if $M_1 \oplus M_2 \in E$ implies that $M_1,M_2 \in E$. If $M \in E$, we say that \emph{$M$ is E-reducible } if there exist $M_1,M_2 \in E$ such that $M = M_1 \oplus M_2$.
Finally, we shall let $\sin$ denote the $d$-tuples of scalar matrices:
\be
\sin \= \{ ( \alpha^1 \idcn, \dots, \alpha^d \idcn)\ : \ \alpha^r \in \C, 1 \leq r \leq d \} .
\label{eqxd5}
\ee
\begin{defin}\label{def4.10}
Let $E$ be a partial nc-set of size $n$ and $\s \subseteq \cup_n\invn$. For each $m \le n$ let $\calgm$ denote the generic elements of $E \cap \m_m^d$ and let $\calrm$ denote the $E$-reducible elements of $E \cap \m_m^d$. We say the pair $(E,\s)$ is \emph{well organized}, if $E$ is finite and complete,
\be\label{4.50}
\forall_{m\le n}\ E \cap \m_m^d=\calrm \cup \calgm,
\ee
and finally, for each $m\le n$ there exists a set $\calbm \subseteq \calgm$ such that
\be\label{4.60}
\{\calbm\} \cup \set{S^{-1}\calbm S \cap E}{S \in \s \cap \invm} \text{ is a partition of } \calgm \text{ and}
\ee
\be\label{4.70}
\forall_{M \in E\cap\m_m^d}\ \forall_{S \in \s \cap \invm}\ S^{-1}MS \in E \implies M \in \calbm
\cup \Sigma^d_m
\ee
\end{defin}

\setlength{\unitlength}{1cm}
\begin{picture}(13,8)
\put(6,1.7){\framebox(3,1.6){$\calrm$}}
\put(11,3.5){\dashbox{.1}(3,1.6){$S_1^{-1} \calrm S_1 \setminus  \Sigma^d_m$}}
\put(11,5.5){\dashbox{.1}(3,1.6){$S_2^{-1} \calrm S_2 \setminus  \Sigma^d_m$}}
\put(3,2.5){\oval(3,1.6)} \put(3,2.5){\makebox(0,0){$\calbm$}}
\put(3,4.5){\oval(3,1.6)} \put(3,4.5){\makebox(0,0){$S_1^{-1} \calbm S_1 \cap E$}}
\put(3,6.5){\oval(3,1.6)} \put(3,6.5){\makebox(0,0){$S_2^{-1} \calbm S_2\cap E$}}
\put(6,1){\makebox(0,0){ A cartoon picture: the solid sets constitute $E_m$. The ovals are $\calgm$.}}
\end{picture}

We note in this definition that necessarily, as the elements of $\calrm$ are $E$-reducible and the elements of $\calgm$ are generic, $\calrm \cap \calgm = \emptyset$. When $m=1$,  $\calrm = \emptyset$, and also, \eqref{4.60} implies that $\calbm = \calgm$ and $\s \cap \invm = \emptyset$. 
Note that for each $m \le n$ and for each $S \in \s \cap \invm$, \eqref{4.60} and \eqref{4.70} imply that
\[
\calbm \= \cup_{S \in \s \cap \invm} (S \calgm S^{-1} \cap \calgm) \cup \{ M \in \calgm \, : \,
\nexists S \in  \s \cap \invm {\rm \ s.t.\ } S^{-1} M S \in E \}
\] (so that $\calbm$ is uniquely determined by $(E,\s)$). When $(E,\s)$ is a well organized pair of size $n$ we set $\calb = \cup_{m \le n}\calbm$ and refer to $\calb$ as the \emph{base of $(E,\s)$}. Similarly, we set $\calg = \cup_{m \le n}\calgm$ and $\calr = \cup_{m \le n}\calrm$.

 If $n \in \n$, we say that $\pi$ is an \emph{ordered partition of n} if there exists a $\sigma \in \n$ such that
\be\label{4.80}
\pi:\{1,\ldots,\sigma\} \to \n\ \ \text{ and } \ \ \sum_{i=1}^\sigma \pi(i) =n.
\ee
We let $\Pi_n$ denote the set of ordered partitions of $n$. If $\pi \in \Pi_n$ and is as in \eqref{4.80}, we set $|\pi| = \sigma$. Finally, we let $[n]$ denote the \emph{trivial} partition, defined by
\[
[n]:\{1\} \to \n\ \ \text{ and } \ \ [n](1) =n.
\]

If $\pi \in \Pi_n$, we let $\m_{n,\pi}^d$ denote the set of $M \in \mn^d$ that have the form
\[
M = \bigoplus_{i=1}^{|\pi|}M_i
\]
where $M_i \in \m_{\pi(i)}^d$ for each $i=1,\ldots,|\pi|$.
\begin{lem}\label{lem4.10}
If $E$ is a partial nc-set, $\s \subseteq \cup_m\invm$, and $(E,\s)$ is well organized of size $n$, then for each $m \le n$, $M \in E \cap \m_m^d$ if and only if there exists a partition $\pi \in \Pi_m$ and matrices $M_1,\ldots,M_{|\pi|}$ such that
\be\label{4.85}
M = \oplus_{i=1}^{|\pi|} M_i \text{ and } M_i \in \mathcal{G}_{\pi(i)} \text{ for } i=1,\ldots,|\pi|.
\ee
 Furthermore, $\pi$ and $M_1,\ldots,M_{|\pi|}$ satisfying \eqref{4.85} are uniquely determined by $M$.
\end{lem}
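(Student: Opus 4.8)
The plan is to prove Lemma~\ref{lem4.10} by induction on $m$, using the structural properties \eqref{4.50}--\eqref{4.70} of a well-organized pair together with completeness of $E$. The base case $m=1$ is immediate: by the remark following Definition~\ref{def4.10}, $\calr_1 = \emptyset$ and $\calb_1 = \calg_1 = E \cap \m_1^d$, so every $M \in E \cap \m_1^d$ is generic and the only partition available is $[1]$, giving existence and uniqueness trivially.

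For the inductive step, suppose the statement holds for all $m' < m$, and fix $M \in E \cap \m_m^d$. By \eqref{4.50}, either $M \in \calg_m$ or $M \in \calr_m$. If $M$ is generic, take $\pi = [m]$ and $M_1 = M$; this gives a decomposition of the required form. If $M \in \calr_m$, then $M$ is $E$-reducible, so $M = M' \oplus M''$ with $M', M'' \in E$, say $M' \in E \cap \m_{m'}^d$ and $M'' \in E \cap \m_{m''}^d$ with $m' + m'' = m$ and $m', m'' \geq 1$. Apply the inductive hypothesis to $M'$ and $M''$ to get partitions $\pi' \in \Pi_{m'}$, $\pi'' \in \Pi_{m''}$ and generic blocks; concatenating $\pi'$ and $\pi''$ yields a partition $\pi \in \Pi_m$ and a decomposition $M = \oplus_i M_i$ with each $M_i$ generic. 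This establishes existence.

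For uniqueness, I would argue as follows. Suppose $M = \oplus_{i=1}^{|\pi|} M_i = \oplus_{j=1}^{|\tau|} N_j$ are two decompositions into generic blocks. The key point is that a decomposition of a $d$-tuple of matrices into direct summands, up to the order and internal structure of the summands, is governed by the decomposition of the underlying module $\c^m$ under the algebra generated by the $M^r$; since each $M_i$ is generic---i.e., indecomposable up to similarity---the blocks $M_i$ are precisely the indecomposable summands and are determined up to similarity and permutation. But in a direct sum realized inside $\m_m^d$ (not merely up to similarity), one must rule out permutations and conjugations; here is where I expect the main obstacle, and where I would use condition \eqref{4.70}: it forces $\calb_m$ to contain no two distinct elements related by an $S \in \s$, and combined with \eqref{4.60} (that the $\calb_m$-translates partition $\calg_m$) this should pin down the blocks exactly. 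More carefully, I would likely prove uniqueness by a separate induction, peeling off one block at a time: from $M = \oplus M_i = \oplus N_j$, compare the action of the two decompositions on a vector and show the first blocks must coincide as ordered tuples of matrices, then induct on the remaining direct sum of strictly smaller size.

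The main obstacle is uniqueness, specifically ruling out that two genuinely different ordered block decompositions inside $\m_m^d$ (as opposed to up to similarity) could coexist. The resolution should come from the fact that the blocks are required to lie in $\calg_{\pi(i)}$ (generic elements of $E$), that $E$ is complete, and that genericity means there is no internal invariant subspace on which the block splits---so the decomposition of $\c^m$ into the sum of the ranges of the block projections is canonical. I would verify that the ordered partition is determined by the sequence of dimensions of the canonical indecomposable summands read in the fixed coordinate order on $\c^m = \c^{\pi(1)} \oplus \cdots \oplus \c^{\pi(|\pi|)}$, which forces both $\pi$ and the $M_i$ to agree.
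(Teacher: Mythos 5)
Your existence argument is essentially the paper's: decompose repeatedly using completeness of $E$ until every summand fails to be $E$-reducible, then invoke \eqref{4.50} to conclude each summand is generic. That part is correct.

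The uniqueness discussion, which you rightly identify as the main point, goes off track. The statement concerns \emph{coordinate} block-diagonal decompositions (the $M_i$ occupy consecutive diagonal blocks of $M$ in the standard basis), so neither Krull--Schmidt ``up to similarity and permutation'' nor conditions \eqref{4.60}--\eqref{4.70} is the right tool: the latter constrain how $E$ interacts with $\s$ and play no role here, and your claim that ``the decomposition of $\c^m$ into the sum of the ranges of the block projections is canonical'' is false as stated (for the scalar tuple $(\alpha\,{\rm id}_{\c^2},\dots)$ there are many decompositions into indecomposable invariant subspaces, only one of which is the coordinate one). The argument that actually works --- and is what the paper's one-line ``each summand is generic, hence irreducible'' is gesturing at --- is elementary: suppose $M$ is block diagonal for two ordered partitions $\pi\neq\tau$ with all blocks generic, and let $i_0$ be the first index where they differ, say $\pi(i_0)<\tau(i_0)$. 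Both partitions allot the same coordinates to the first $i_0-1$ blocks, so the $\tau$-block $N_{i_0}$ occupies a coordinate interval whose initial segment of length $\pi(i_0)$ is exactly the $\pi$-block $M_{i_0}$; since every entry of $M$ coupling two distinct $\pi$-blocks vanishes, $N_{i_0}=M_{i_0}\oplus R$ with $R$ of positive size, contradicting genericity of $N_{i_0}$. Hence $\pi=\tau$, and the blocks, being the diagonal blocks of $M$ in those positions, coincide. Your ``peel off one block at a time'' instinct is the right one; you just need to drop the similarity-invariant framing and the appeal to \eqref{4.60}--\eqref{4.70}.
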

\begin{proof}
Let $m \le n$ and fix $M \in E \cap \m_m^d$. As $E$ is assumed to be complete, an inductive argument implies that there exist $\pi \in \Pi_m$ and $M_1,\ldots,M_{|\pi|} \in E$ such that
\[
M = \oplus_{i=1}^{|\pi|} M_i,\qquad \forall_i\ M_i \in E \cap \m_{\pi(i)},
\]
where $M_i$ is not $E$-reducible for each $i=1,\ldots,|\pi|$. In particular, \eqref{4.50} implies that $M_i \in \calg_{\pi(i)}$ for each $i$. That the decomposition is unique, follows from the fact that each of the summands, $M_i$ is generic, and hence, irreducible.
\end{proof}
If $(E,\s)$ is a well organized pair of size $n$ , we define $\vspace (E,\s)$ to be the vector space consisting of the $\s$-invariant partial nc-functions on $E$, and define $\grade (\calb)$ to be the vector space of graded matrix valued functions on $\calb$, i.e., the collection of functions $\omega:\calb \to \m^1$ such that
\[
\forall_{m \le n}\ \forall_{M \in \calb \cap \m_m^d}\ \omega(M) \in \m_m.
\]
\begin{prop}\label{prop4.10}
The map $\rho:\vspace (E,\s) \to \grade (\calb)$ defined by $\rho(\phi)=\phi\ |\ \calb$ is a vector space isomorphism.
\end{prop}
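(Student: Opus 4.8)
The plan is to show $\rho$ is a well-defined linear map that is both injective and surjective; linearity being immediate from the pointwise definition, the content is in the other two. The key structural input is Lemma~\ref{lem4.10}: every $M \in E \cap \m_m^d$ has a \emph{unique} decomposition $M = \oplus_{i=1}^{|\pi|} M_i$ with each $M_i \in \calg_{\pi(i)}$, and the well-organization axioms \eqref{4.60}--\eqref{4.70} say precisely that each generic $M_i$ either lies in the base $\calb$, or lies in some $S^{-1}\calb_m S \cap E$ for a unique $S \in \s \cap \invm$ (together with the scalar locus $\Sigma^d_m$, on which, since $S^{-1}M_iS = M_i$ for scalar tuples, $\s$-invariance forces $u(M_i)$ to be determined by $u(M_i)$ itself but these scalars are also generic base points when $m=1$, and for $m\ge 2$ a tuple of scalar matrices is reducible unless $m=1$, so in fact the $\Sigma^d_m$ case only contributes through $\calb_1$). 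Thus knowing an $\s$-invariant partial nc-function $\phi$ on $\calb$ determines, via $\s$-invariance, its value on all of $\calg$, and then via the direct-sum property on all of $E$.

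\emph{Injectivity of $\rho$.} Suppose $\phi \in \vspace(E,\s)$ and $\phi|\calb = 0$. Fix $M \in E \cap \m_m^d$ and write $M = \oplus_{i=1}^{|\pi|} M_i$ as in Lemma~\ref{lem4.10}, with $M_i \in \calg_{\pi(i)}$. By the partition property \eqref{4.60}, each $M_i$ is either in $\calb_{\pi(i)}$ or equals $S^{-1}N S$ for some $N \in \calb_{\pi(i)}$ and $S \in \s \cap \invm[{\pi(i)}]$ with $S^{-1}N S \in E$; in the first case $\phi(M_i) = 0$ by hypothesis, and in the second case $\s$-invariance gives $\phi(M_i) = S^{-1}\phi(N)S = 0$. (The residual scalar case in \eqref{4.70} is covered since $\Sigma^d_{\pi(i)}$ generic elements occur only for $\pi(i)=1$, where $\calb_1 = \calg_1$ contains them.) Then the partial nc-function property \eqref{4.20} gives $\phi(M) = \oplus_i \phi(M_i) = 0$. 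Hence $\phi = 0$.

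\emph{Surjectivity of $\rho$.} Given $\omega \in \grade(\calb)$, define $\phi$ on $E$ as follows. For $M \in \calg_m$: if $M \in \calb_m$ set $\phi(M) = \omega(M)$; otherwise, by \eqref{4.60} there is a unique $S \in \s \cap \invm$ with $M = S^{-1}N S$ for a (unique) $N = S M S^{-1} \in \calb_m$, and set $\phi(M) = S^{-1}\omega(N)S$. For general $M \in E$, use Lemma~\ref{lem4.10} to write $M = \oplus_i M_i$ with $M_i \in \calg_{\pi(i)}$ and set $\phi(M) = \oplus_i \phi(M_i)$; this is unambiguous because the decomposition is unique, and consistent with the previous clause when $M$ is itself generic (trivial partition). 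One then checks $\phi \in \vspace(E,\s)$: the grading condition \eqref{4.10} is clear; the direct-sum condition \eqref{4.20} holds because concatenating the canonical decompositions of $M_1$ and $M_2$ gives the canonical decomposition of $M_1 \oplus M_2$; and $\s$-invariance, the only delicate point, must be verified when $M \in E$, $S \in \s$, and $S^{-1}MS \in E$. By \eqref{4.70} this situation (outside $\Sigma^d_m$, which is handled directly since scalar tuples commute with every $S$) forces $M \in \calb_m$, and then $S^{-1}MS$ is one of the designated representatives $S^{-1}\calb_m S \cap E$, so by construction $\phi(S^{-1}MS) = S^{-1}\omega(M)S = S^{-1}\phi(M)S$ exactly. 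Finally $\rho(\phi) = \phi|\calb = \omega$.

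The main obstacle is the surjectivity direction, specifically checking $\s$-invariance of the extension: one needs \eqref{4.70} to guarantee that whenever a similarity by some $S \in \s$ maps one element of $E$ to another, the source element lies in the base (or the scalar locus), so that the extension's value there was assigned directly from $\omega$ rather than already being a derived value that might conflict. The well-organization axioms are tailored precisely to rule out such conflicts, so once they are invoked correctly the verification is routine; the bookkeeping with the $\Sigma^d_m$ exceptional set and the $m=1$ base case is the only place requiring care.
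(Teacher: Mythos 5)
Your proof is correct and follows essentially the same route as the paper's: define the extension on the base from $\omega$, propagate to the generic elements by the unique similarity coming from \eqref{4.60}, extend to the reducible elements by the unique decomposition of Lemma~\ref{lem4.10}, and let \eqref{4.70} do the work in the $\s$-invariance check, with injectivity following because these formulas determine the function from its restriction to $\calb$. The only point you gloss over is that in the residual case $M \in \Sigma^d_m$ one must also observe that $\phi(M)$ is itself a scalar matrix (which follows from the direct-sum construction), so that $\phi(M)$, and not merely $M$, commutes with $S$; the paper makes this explicit.
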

\begin{proof}
\eqref{4.10} guarantees that $\rho$ maps into $\grade(\calb)$ and clearly, $\rho$ is linear. To see that $\rho$ is onto, fix $\omega \in \grade(\calb)$. Define $u$ on $\calb$ by setting
\be\label{4.90}
u(x) = \omega(x),\qquad x \in \calb.
\ee
Then use \eqref{4.60} to extend $u$ to $\cup_{m \le n} \calgm$ by the formulas
\be\label{4.100}
u(x) = S^{-1}u(SxS^{-1})S, \qquad m\le n, x,SxS^{-1}  \in \calgm
\ee
where $S$ is the unique element in $\s \cap \invm$ such that $x \in S^{-1}\calbm S$. Finally, we extend $u$ to $\cup_{m \le n} \calrm$ by setting
\be\label{4.110}
u(x) = \oplus_{i=1}^{|\pi|} u(M_i)\qquad m\le n,\ x \in \calrm
\ee
where $x = \oplus_{i=1}^{|\pi|} M_i$ is the unique representation of $x$ given by Lemma \ref{lem4.10}.

To see that $u$ as  just  defined is a a partial nc-function first fix $M_1 \in E \cap \m_{m_1}^d$ and $M_2 \in E \cap \m_{m_2}^d$ where $m_1+m_2 \le n$. By Lemma \ref{4.10}, there exist partitions $\pi_1 \in \Pi_{m_1}$ and $\pi_2 \in \Pi_{m_2}$ such that
\[
M_1 = \oplus_{i=1}^{|\pi_1|} M_i,\qquad  M_i \in \mathcal{G}_{\pi_1(i)} \text{ for } i=1,\ldots,|\pi_1|
\]
and
\[
M_2 = \oplus_{i=1}^{|\pi_2|} N_i,\qquad  N_i \in \mathcal{G}_{\pi_2(i)} \text{ for } i=1,\ldots,|\pi_2|.
\]
If we define $\pi \in  \Pi_{m_1+m_2}$ by
\[
\pi(l) =
\left\{
	\begin{array}{ll}
		\pi_1(l)  & \mbox{if } 1 \le l \le |\pi_1|, \\
		\pi_2(l-|\pi_1|)  & \mbox{if } |\pi_1| +1 \le l \le |\pi_1|+|\pi_2|
	\end{array}
\right.
\]
and let
\[
x_l =
\left\{
	\begin{array}{ll}
		M_l  & \mbox{if } 1 \le l \le |\pi_1|, \\
		N_{l-|\pi_1|}  & \mbox{if } |\pi_1| +1 \le l \le |\pi_1|+|\pi_2|,
	\end{array}
\right.
\]
then $M_1 \oplus M_2 = \oplus_l x_l$ is the unique decomposition of $M_1 \oplus M_2$ given in Lemma \ref{lem4.10}. Hence, using \eqref{4.110},
\begin{align*}
u(M_1 \oplus M_2) &= u(\oplus_l x_l)\\
&= \oplus_l u(x_l)\\
&=\oplus_{l=1}^{|\pi_1|}x_l \oplus \oplus_{l=|\pi_2|+1}^{|\pi_1|+|\pi_2|}x_l\\
&=u(M_1) \oplus u(M_2).
\end{align*}

To see that $u$ is $\s$-invariant, fix $M \in E \cap \m_m^d$ and $S \in \s \cap \invm$ satisfying $S^{-1}MS \in E$. Then \eqref{4.70} guarantees that $M \in \calbm \cup \Sigma^d_m$.
If $M \in \Sigma^d_m$, then so is $u(M)$ by \eqref{4.110}, and both $M$ and $u(M)$
are left invariant by conjugation with $S$.
If $M \in \calbm$, then using \eqref{4.100},
\begin{align*}
u(S^{-1}MS) &= S^{-1} u(S(S^{-1}MS)S ^{-1})S\\
&=S^{-1}u(M)S.
\end{align*}

Summarizing, we have shown that $u$, as defined above, is a partial nc-function that is $\s$-invariant. Hence, $u \in \vspace(E,\s)$. That $\rho(u) = \omega$ follows from \eqref{4.90}. This completes the proof that $\rho$ is onto.

To see that $\rho$ is 1-1, notice that if $v \in \vspace(E,\s)$ and $\rho(v) =\omega$, then as $\rho(v) =\omega$, necessarily \eqref{4.90} holds with $u$ replaced with $v$. As $v$ is $\s$-invariant, \eqref{4.100} also holds with $u$ replaced with $v$. Finally, as $v$ is a partial nc-function, \eqref{4.110} as well holds with $u$ replaced with $v$. These facts imply that $v=u$.
\end{proof}

For the remainder of the section $(E,\s)$ is a well organized pair of size $n$ and we set $\vspace=\vspace(E,\s)$.
We define a $d$-tuple of linear transformations,
\be\label{4.114}
X_\vspace=(X_\vspace^1,\ldots,X_\vspace^d),
\ee
on  $\vspace (E,\s)$ by setting
\be\label{4.115}
(X_\vspace^r u) (x) = x^r u(x),\qquad x \in E.
\ee
Likewise, we define a $d$-tuple of linear transformations $X_\calb=(X_\calb^1,\ldots,X_\calb^d)$ on  $\grade(\calb)$ by setting
\[
(X_\calb^r \omega) (x) = x^r \omega(x),\qquad x \in \calb.
\]
When $\calv_1$ and $\calv_2$ are vector spaces and $L:\calv_1 \to \calv_2$ is a vector space isomorphism, we shall write $\calv_1 \stackrel{L}{\sim} \calv_2$. If in addition, $T_1$ is a $d$-tuple of linear transformations of $\calv_1$, $T_2$ is a $d$-tuple of linear transformations of $\calv_2$, and $T_1 = L^{-1}T_2 L$ we write $T_1 \stackrel{L}{\sim} T_2$. Observe that with these notations, that if $\rho$ is the isomorphism of Proposition \ref{prop4.10}, then
\be\label{4.120}
X_\calv \stackrel{\rho}{\sim} X_\calb.
\ee

For a vector space $V$ we let $V^{(m)} = \oplus_{i=1}^m V$ and if $T$ is a linear transformation of $V$ we set $T^{(m)} = \oplus_{i=1}^m T$. We define $\gamma:\m_m \to {(\c^m)}^{(m)}$ by $\gamma(M) = \oplus_{j = 1}^m M_ j$ where $M_j$ is the $j^{\text{th}}$ column of $M$. If we let $M_x$ denote the operator on $\mn$ defined by $M_x(M) = xM$, then $M_x \stackrel{\gamma}{\sim} x^{(m)}$. It follows that if we define
\[
\beta:\grade(\calb) \to \bigoplus_{m=1}^n \bigoplus_{B \in \calbm} (\c^m)^{(m)}
\]
by the formula
\[
\beta(\omega) = \bigoplus_{m=1}^n \bigoplus_{B \in \calbm} \gamma(u(B)) ,
\]
then $\beta$ is an isomorphism and
\be\label{4.130}
X_\calb \stackrel{\beta}{\sim}\bigoplus_{m=1}^n \bigoplus_{B \in \calbm}B^{(m)}.
\ee

Now assume that $D$ is an nc-domain, $E \subseteq D$, and $\phi$ is an nc-function defined on $D$. We may define a linear transformation $M_\phi$ on $\calv$ by the formula,
 \be\label{4.140}
 (M_\phi u )(x) = \phi(x) u(x),\qquad x \in E.
 \ee
 Noting that
 \[
 \calv\ \stackrel{\beta \circ \rho}{\sim}\ \bigoplus_{m=1}^n \bigoplus_{B \in \calbm} (\c^m)^{(m)},
 \]
 \[
 M_\phi\ \stackrel{\beta \circ \rho}{\sim}\ \bigoplus_{m=1}^n \bigoplus_{B \in \calbm} M_{\phi(B)}^{(m)},
 \]
 and
 \[
\bigoplus_{m=1}^n \bigoplus_{B \in \calbm} M_{\phi(B)}^{(m)} = \phi\Big(\bigoplus_{m=1}^n \bigoplus_{B \in \calbm} M_{B}^{(m)}\Big),
 \]
 we have that
 \[
  M_\phi\ \stackrel{\beta \circ \rho}{\sim}\ =
  \phi\Big(\bigoplus_{m=1}^n \bigoplus_{B \in \calbm} M_{B}^{(m)}\Big).
\]
But
\[
X_\calv\ \stackrel{\beta \circ \rho}{\sim}\ \bigoplus_{m=1}^n \bigoplus_{B \in \calbm}B^{(m)}\ \in\ D.
\]
Hence, $X_\calv \in D^{\sss}$ and $M_\phi = \phi^{\sss}(X_\calv)$.

We summarize what has just been proven in the following proposition.
 \begin{prop}\label{prop4.20}
 Let $D$ be an nc domain and assume that $E \subseteq D$ and $\phi$ is an nc-function on $D$. Also assume that $\s \subset \cup_{m=1}^n \invm$, that $(E,\s)$ is a well organized pair, and let $\calv$ denote the vector space of $\s$-invariant nc-functions on $E$. If the d-tuple of linear transformations on $\calv$, $X_\calv$, is defined by \eqref{4.114} and \eqref{4.115} and the linear transformation on $\calv$, $M_\phi$, is defined by \eqref{4.140}, then
 \[
 M_\phi = \phi^{\sss}(X_\calv).
 \]
 \end{prop}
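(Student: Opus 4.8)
The plan is to transport both $X_\calv$ and $M_\phi$ along an explicit composition of vector-space isomorphisms until $X_\calv$ is visibly similar to a block-diagonal $d$-tuple built out of elements of $E$, and then to read off $\phi^{\sss}$ of that tuple using the direct-sum axiom for the nc-function $\phi$. First I would use the isomorphism $\rho:\calv \to \grade(\calb)$ of Proposition~\ref{prop4.10}, which is just restriction to the base $\calb$. Since both $X_\calv$ and $M_\phi$ act by pointwise multiplication on $\calv$ (by $x^r$ and by $\phi(x)$ respectively), restriction to $\calb$ conjugates $X_\calv$ to $X_\calb$ — this is exactly \eqref{4.120} — and conjugates $M_\phi$ to the operator $M_{\phi\,|\,\calb}$ of pointwise multiplication by $\phi$ on $\grade(\calb)$.

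Next I would apply the isomorphism $\beta$, which identifies $\grade(\calb)$ with $\bigoplus_{m=1}^n\bigoplus_{B\in\calbm}(\c^m)^{(m)}$ by recording, for each $B$, the list of $m$ columns of the matrix value $\omega(B)\in\m_m$ (the map $\gamma$). In the column picture, left multiplication of an $m\times m$ matrix by a fixed matrix $A$ acts as $A^{(m)}$, i.e.\ $M_x\stackrel{\gamma}{\sim}x^{(m)}$; applying this with $A=B^r$ gives $X_\calb\stackrel{\beta}{\sim}\bigoplus_m\bigoplus_{B\in\calbm}B^{(m)}$, which is \eqref{4.130}, and applying it with $A=\phi(B)$ gives $M_{\phi\,|\,\calb}\stackrel{\beta}{\sim}\bigoplus_m\bigoplus_{B\in\calbm}\phi(B)^{(m)}$. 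Writing $S:=\beta\circ\rho:\calv\to\c^N$ with $N=\dim\calv$, and $Y:=\bigoplus_m\bigoplus_{B\in\calbm}B^{(m)}$, we thus have $SX_\calv S^{-1}=Y$ and $SM_\phi S^{-1}=\bigoplus_m\bigoplus_{B\in\calbm}\phi(B)^{(m)}$.

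The key point is then that each $B\in\calbm\subseteq E\subseteq D$, and $D$ is an nc-set, hence closed under direct sums; so $B^{(m)}\in D$ and $Y\in D\cap\m_N^d$. Therefore $X_\calv$ is similar via $S$ to the element $Y$ of $D$, so $X_\calv\in D^{\sss}$, and the defining formula \eqref{bx1} (with $\h=\k=\c$) gives $\phi^{\sss}(X_\calv)=S^{-1}\phi(Y)S$. Finally, $\phi$ is an nc-function, so it respects finite direct sums (Definition~\ref{defa3}); since all partial direct sums of the blocks of $Y$ again lie in $D$ ($D$ being an nc-set), iterating this yields $\phi(Y)=\bigoplus_m\bigoplus_{B\in\calbm}\phi(B^{(m)})=\bigoplus_m\bigoplus_{B\in\calbm}\phi(B)^{(m)}$, which is precisely $SM_\phi S^{-1}$. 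Hence $M_\phi=S^{-1}\phi(Y)S=\phi^{\sss}(X_\calv)$, as claimed.

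The linear algebra here — that $\rho$ and $\beta$ are isomorphisms, and that they intertwine the stated pairs of operators — is the bulk of the writing but is routine. The one place needing genuine care is the $\beta$ step: one must simultaneously track that multiplication by $x^r$ and multiplication by $\phi(x)$ pass to the two respective block-diagonal tuples in the column picture, and combine this with the observation that evaluating the nc-function $\phi$ on the block-diagonal tuple $Y$ reproduces the block diagonal of the values $\phi(B^{(m)})$ — which is just the direct-sum property, but legitimate only after one checks that $Y$ and all of its sub-block-sums actually lie in $D$.
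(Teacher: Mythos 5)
Your proposal is correct and follows essentially the same route as the paper: conjugate by $\rho$ and then by $\beta$ to identify $X_\calv$ with the block-diagonal tuple $\bigoplus_m\bigoplus_{B\in\calbm}B^{(m)}\in D$ and $M_\phi$ with $\bigoplus_m\bigoplus_{B\in\calbm}\phi(B)^{(m)}$, then invoke the direct-sum axiom for $\phi$ and the definition \eqref{bx1} of $\phi^{\sss}$. Your explicit check that the block-diagonal tuple and its partial sums lie in $D$ (so that the direct-sum property may be iterated) is a point the paper leaves implicit, but it is the same argument.
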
 


\section{Well Organized Models and Realizations}
\label{secwo}

For $D$ an nc-domain, we let $H^\infty(D)$ denote the bounded nc-functions on $D$. As the elements of  $H^\infty(D)$ are locally bounded, it follows from Theorem \ref{thm3.10} that $H^\infty(D) \subseteq H(D)$. 

Similarly, $H^\i_{\l(\h,\k)}$ denotes the bounded $\l(\h,\k)$-valued nc-functions on $D$, so functions $\Psi$ for which
there is a constant $C$ such that 
\[
 \Psi(x)^* \Psi(x) \ \leq \ C \ \idd
\quad \forall x \in D.\]
We fix for the remainder of this section a matrix $\delta$ of free polynomials. By adding rows or columns of zeroes, if necessary, we can  assume that $\delta$ is actually a square $J$-by-$J$ matrix.
Define $G_\delta$
\[
G_\delta \=  \set{M\in \m^d }{\norm{\delta(M)}<1}, \]
and assume that $\gdel$ is non-empty.

Let us note that it is possible for $\gdel$ to be empty at lower levels, and non-empty at higher ones.
For example, if $\delta$ is the single polynomial
\[
\delta(x^1,x^2) \= 1 - (x^1 x^2 - x^2 x^1)(x^1 x^2 - x^2 x^1) ,
\]
then $\gdel \cap \M^2_1$ is empty, but $\gdel \cap \M^2_2$ is not.
If this occurs, we just start our constructions at the first $m$ for which $\gdel \cap \M^d_m$
is non-empty.


\subsection{$\hv,\rv,\pv$ and $\cv$ }
\label{ssechv}

We fix for the remainder of the sub-section a well organized pair $(E,\s)$ of size $n$, with $E \subset \gdel$.
We  fix Hilbert spaces $\h, \k_1$ and $\k_2$, with $\h$ finite dimensional;
we shall let $\M$ denote an arbitrary auxiliary Hilbert space. 
We also fix a pair of functions $\Psi$  in $H^\i_{\l(\h,\k_1)}(\gdel)$
and $\Phi$  in $H^\i_{\l(\h,\k_2)}(\gdel)$
satisfying
\be
\label{eq6w1}
\Psi(x)^* \Psi(x) - \Phi(x)^* \Phi(x)\ \geq \ 0 \qquad \forall \ x \in \gdel .
\ee

We define $\Theta(y,x)$ by
\be
\label{eq6w2}
\Theta(y,x) \=
\Psi(y)^* \Psi(x) - 
 \Phi(y)^* \Phi(x)  .
\ee

We adopt the following notations of the previous section: $\calb$, $\calr$, and $\calg$ for the basic, reducible, and generic elements of $(E,\s)$; and $\calv$ for the vector space of $\s$-invariant partial nc-functions on $E$. We let $\calvh$ (resp. $\calvhm$)  denote the vector space
of $\l(\h)$-valued (resp. $\l(\h,\M)$-valued)  partial nc-functions on $E$.
When $\psi \in \calvm$, $M_\psi$ denotes the operator defined on $\calvhm$ by
\be\label{5.05}
(M_\psi u)(x) = \psi(x)u(x),\qquad x \in E.
\ee

We let $\hvh$ denote the set of $\l(\h)$-valued graded functions $h$ on
\be
\label{5.051}
E^{[2]} = \bigcup_{m=1}^n E_m \times E_m
\ee
 that have the form
\[
h(y,x) = \sum_{i=1}^\sigma g_i(y)^*f_i(x),\qquad 1\le m \le n,\ x,y \in E \cap \m_m^d
\]
where $\sigma \in \n$ and $f_i,g_i \in \calvhc$ for $i=1,\ldots,\sigma$. $\hvh$ is a finite dimensional vector space and is a Banach space as well, when equipped with the norm
\[
\norm{h} = \sup_{(y,x) \in E^{[2]}}\norm{h(y,x)}.
\]
We set
\be
\label{eqft1}
\rvh = \set{h\in \hvh}{h(x,y) =h(y,x)^*}
\ee
and define $\pvh$ to consist of the elements $h \in \rvh$ that have the special form
\be\label{5.10}
h(y,x) = \sum_{i=1}^\sigma f_i(y)^*f_i(x),\qquad 1\le m \le n,\ x,y \in E \cap \m_m^d
\ee
where $\sigma \in \n$ and $f_i \in \calvhc$ for $i=1,\ldots,\sigma$. Evidently, $\rvh$ is a real subspace of $\hvh$ and $\pvh$ is a cone\footnote{
By a cone, we mean a convex set closed under multiplication by non-negative real numbers.
This is sometimes called a {\em wedge}.} in $\rv$.
\begin{lem}\label{lem5.10}
Let $\M$ be a finite dimensional Hilbert space, and 
let $F(y,x)$ be an arbitrary graded $\l(\M)$-valued function on $E^{[2]}$.
Let $N = \dim(\calvhm)$.
Then
if $G$ can be represented in the form
\[
G(y,x) = \sum_{i=1}^\sigma g_i(y)^*F(y,x)g_i(x),\qquad 1\le m \le n,\ x,y \in E \cap \m_m^d,
\]
where $\sigma \in \n$ and $g_i \in \calvhm$ for $i=1,\ldots,\sigma$, then $G$ can be represented in the form
\be\label{5.12}
G(y,x) = \sum_{i=1}^N f_i(y)^*F(y,x)f_i(x),\qquad 1\le m \le n,\ x,y \in E \cap \m_m^d,
\ee
where $f_i \in \calvhm$ for $i=1,\ldots, N$.
\end{lem}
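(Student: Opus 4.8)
The plan is to view the representation $G(y,x) = \sum_{i=1}^\sigma g_i(y)^* F(y,x) g_i(x)$ as a single quadratic expression built from the vector $(g_1, \dots, g_\sigma)$, and to reduce the number of summands using a linear-algebraic argument about dimension. Concretely, I would first observe that each $g_i$ lives in the finite-dimensional vector space $\calvhm$, which has dimension $N$. Fix a basis $u_1, \dots, u_N$ of $\calvhm$ and write $g_i = \sum_{p=1}^N c_{ip} u_p$ with scalars $c_{ip} \in \c$ (here $\M$ is a Hilbert space, so $\calvhm$ is a complex vector space; if one needs to be careful about the scalar field one treats everything over $\c$). Substituting, one gets
\[
G(y,x) \= \sum_{p,q=1}^N \Big( \sum_{i=1}^\sigma \overline{c_{iq}} \, c_{ip} \Big)\, u_q(y)^* F(y,x) u_p(x) \= \sum_{p,q=1}^N A_{qp}\, u_q(y)^* F(y,x) u_p(x),
\]
where $A = (A_{qp})$ is the $N \times N$ positive semidefinite matrix $A = C^* C$ with $C = (c_{ip})$.

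The key step is then to diagonalize $A$. Since $A$ is positive semidefinite of size $N$, one can write $A = \sum_{k=1}^N \lambda_k v_k v_k^*$ with $\lambda_k \ge 0$ and $\{v_k\}$ an orthonormal eigenbasis of $\c^N$; equivalently $A = B^* B$ for some $N \times N$ matrix $B = (b_{kp})$ (for instance $B$ with rows $\sqrt{\lambda_k}\, v_k^*$). Setting $f_k(x) = \sum_{p=1}^N b_{kp} u_p(x) \in \calvhm$ for $k = 1, \dots, N$, one computes
\[
\sum_{k=1}^N f_k(y)^* F(y,x) f_k(x) \= \sum_{p,q=1}^N \Big(\sum_{k=1}^N \overline{b_{kq}} \, b_{kp}\Big) u_q(y)^* F(y,x) u_p(x) \= \sum_{p,q=1}^N A_{qp} \, u_q(y)^* F(y,x) u_p(x) \= G(y,x),
\]
which is exactly the desired representation \eqref{5.12} with at most $N$ summands (pad with zero functions if fewer eigenvalues are nonzero, so that one has exactly $N$ terms). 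Each $f_k$ is a $\c$-linear combination of elements of $\calvhm$, hence itself an element of $\calvhm$, so it is an $\l(\h,\M)$-valued $\s$-invariant partial nc-function as required.

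I expect the only real subtlety — and it is minor — to be bookkeeping about the scalar field and the fact that $F(y,x)$ sits \emph{between} $g_i(y)^*$ and $g_i(x)$ rather than appearing as a plain inner product: one must make sure that the scalars $c_{ip}$ pull out of $g_i(y)^* F(y,x) g_i(x)$ correctly, producing $\overline{c_{iq}} c_{ip}$ as the coefficient of $u_q(y)^* F(y,x) u_p(x)$, which is precisely what makes the coefficient matrix $C^* C$ positive semidefinite and hence factorable as $B^* B$. There is no analytic content here at all; the statement is purely that a quadratic form in $\sigma$ "vector" variables ranging over an $N$-dimensional space can be rewritten using $N$ variables, via the rank bound $\operatorname{rank}(C^* C) \le N$. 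So the "hard part" is really just setting up the notation cleanly; once $A = C^*C$ is identified as an $N \times N$ positive semidefinite matrix, the conclusion is immediate from its Cholesky-type factorization.
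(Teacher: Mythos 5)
Your proposal is correct and is essentially identical to the paper's proof: both expand each $g_i$ in a basis of $\calvhm$, identify the coefficient matrix of the resulting quadratic expression as the $N\times N$ positive semidefinite matrix $C^*C$, factor it as $A^*A$ (your $B^*B$), and read off the $N$ functions $f_k$ as the corresponding linear combinations of basis elements.
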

\begin{proof}
Let $\langle e_l(x)\rangle_{l=1}^N$ be a basis of $\calvhm$. For each $i = 1,\ldots,\sigma$, let
\[
g_i(x) = \sum_{l=1}^N c_{il}\ e_l(x).
\]
Form the $\sigma \times N $ matrix $C=[c_{il}]$. As $C^*C$ is an $N \times N$ positive semidefinite matrix, there exists an $N \times N$ matrix $A = [a_{kl}]$ such that $C^*C = A^*A$. This leads to the formula,
\[
\sum_{i=1}^\sigma \overline{c}_{il_1}c_{il_2} = \sum_{k=1}^N \overline{a}_{kl_1}a_{kl_2},
\]
valid for all $l_1,l_2 = 1,\ldots, N$. If $1\le m \le n$ and $ x,y \in E \cap \m_m^d$, then
\begin{align*}
G(y,x) &= \sum_{i=1}^\sigma g_i(y)^*F(y,x)g_i(x)\\
&=\sum_{i=1}^\sigma \big(\sum_{l=1}^N c_{il}\ e_l(y)\big)^*F(y,x)\big(\sum_{l=1}^N c_{il}\ e_l(x)\big)\\
&=\sum_{l_1,l_2=1}^N (\sum_{i=1}^\sigma \overline{c}_{il_1}c_{il_2})e_{l_1}(y)^*F(y,x)e_{l_2}(x)\\
&=\sum_{l_1,l_2=1}^N (\sum_{k=1}^N \overline{a}_{kl_1}a_{kl_2})e_{l_1}(y)^*F(y,x)e_{l_2}(x)\\
&=\sum_{k=1}^N \big(\sum_{l=1}^N a_{kl}\ e_l(y)\big)^*F(y,x)\big(\sum_{l=1}^N a_{kl}\ e_l(x)\big).
\end{align*}
This proves that \eqref{5.12} holds with $f_i = \sum_{l=1}^N a_{il}\ e_l$.
\end{proof}
\begin{lem}\label{lem5.20}
If $h \in \pvh$, $x,y \in E \cap \m_m^d$, and $c,d \in \c^m \otimes \h$, then
\[
|\langle h(y,x)c,d\rangle|^2 \le \langle h(x,x)c,c\rangle \langle h(y,y)d,d\rangle.
\]
\end{lem}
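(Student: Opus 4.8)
The plan is to prove Lemma~\ref{lem5.20} as a Cauchy--Schwarz inequality applied to the positive semidefinite kernel structure furnished by the special form \eqref{5.10} of elements of $\pvh$. First I would write $h(y,x) = \sum_{i=1}^\sigma f_i(y)^* f_i(x)$ with each $f_i \in \calvhc$, so that for $x,y \in E \cap \m_m^d$ and $c,d \in \c^m \otimes \h$ we have
\[
\langle h(y,x)c,d\rangle \= \sum_{i=1}^\sigma \langle f_i(x)c, f_i(y)d\rangle .
\]
Here I am using that $f_i(x) \in \l(\c^m \otimes \h, \c^m \otimes \c) = \l(\c^m \otimes \h, \c^m)$, so $f_i(x)c$ and $f_i(y)d$ are vectors in $\c^m$, and the expression on the right is a genuine inner product sum in $\c^m$ (or, assembling over $i$, in $(\c^m)^{\oplus \sigma}$).

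Next I would view the right-hand side as a single inner product in the Hilbert space $\HH := (\c^m)^{\oplus \sigma}$: define $F_x := (f_1(x)c, \dots, f_\sigma(x)c) \in \HH$ and $G_y := (f_1(y)d, \dots, f_\sigma(y)d) \in \HH$. Then $\langle h(y,x)c,d\rangle = \langle F_x, G_y\rangle_{\HH}$, and in particular $\langle h(x,x)c,c\rangle = \|F_x\|_{\HH}^2$ while $\langle h(y,y)d,d\rangle = \|G_y\|_{\HH}^2$. The ordinary Cauchy--Schwarz inequality in $\HH$ now gives
\[
|\langle h(y,x)c,d\rangle|^2 \= |\langle F_x, G_y\rangle_{\HH}|^2 \ \leq \ \|F_x\|_{\HH}^2\, \|G_y\|_{\HH}^2 \= \langle h(x,x)c,c\rangle\, \langle h(y,y)d,d\rangle,
\]
which is exactly the claimed inequality.

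There is essentially no obstacle here: the only thing to be careful about is the bookkeeping of which space each quantity lives in — namely that the ``values'' $f_i(x)$ are operators into $\c^m$ (the scalar-valued case $\k = \c$ of Definition~\ref{defax1}), so that pairing $f_i(x)c$ against $f_i(y)d$ makes sense and that summing over $i$ produces the diagonal values $h(x,x)$ and $h(y,y)$ correctly when we set $y=x$, $d=c$ (respectively keep $x$, set the second slot equal to $y$, $d=d$). Once the identification $\langle h(y,x)c,d\rangle = \langle F_x, G_y\rangle_{\HH}$ is recorded, the lemma is immediate from Cauchy--Schwarz, and it does not even require finite-dimensionality of $\HH$. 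I would also remark that the inequality depends only on $h$ lying in the cone $\pvh$, i.e. on the existence of \emph{some} representation of the form \eqref{5.10}, and not on the particular choice of the $f_i$.
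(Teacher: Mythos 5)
Your proof is correct and is essentially the paper's argument: the paper applies Cauchy--Schwarz termwise and then again to the resulting sums, which is exactly your single application of Cauchy--Schwarz in the direct sum $(\c^m)^{\oplus\sigma}$, just unpacked. No issues.
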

\begin{proof}
Assume that \eqref{5.10} holds.
\begin{align*}
|\langle h(y,x)c,d\rangle|^2 &= \Big|\ip{\sum_{i=1}^\sigma f_i(y)^*f_i(x)c}{d}\Big|^2\\
&=\Big|\sum_{i=1}^\sigma \ip{f_i(x)c}{f_i(y)d}\Big|^2\\
& \le \Big(\ \sum_{i=1}^\sigma\ \norm{f_i(x)c}\ \norm{f_i(y)d}\ \Big)^2\\
&\le \big(\sum_{i=1}^\sigma\ \norm{f_i(x)c}^2\big)\big(\sum_{i=1}^\sigma\ \norm{f_i(y)d}^2\big)\\
&=\big(\ip{\sum_{i=1}^\sigma f_i(x)^*f_i(x)c}{c}\big)\big(\ip{\sum_{i=1}^\sigma f_i(y)^*f_i(y)d}{d}\big)\\
&= \langle h(x,x)c,c\rangle \langle h(y,y)d,d\rangle.
\end{align*}
\end{proof}
If $u \in \nc_{\l(\h, \M \otimes\c^J)}(\gdel)$, then 
we may define $\delta u \in  \nc_{\l(\h, \M \otimes\c^J)}(\gdel)$ by the formula,
\be
\label{eqhb1}
(\delta u)(x) \= (\delta(x) \otimes \idd_{\M}) u(x) \qquad x \in \gdel .
\ee
\begin{defin}\label{def5.05}
We let $\cvh$ and $\ctvh$ be the cones generated in $\rvh$ by the elements in $\rvh$ of the form
\[
u(y)^*[ \idd - \d (y)^*\d (x) ]u(x),\qquad{\rm and} \qquad
u(y)^*[\tau^2\, \idd - \d (y)^*\d (x) ]u(x),
\]
respectively, where  $u \in \calvhoj$, and $\tau$ is such that
\be
\label{eqgx1}
 \rho \ :=\ 
\max \{ \| \d(x) \| \, : \, x \in E \}   \ < \  \tau  \ < \ 1.
\ee
\end{defin}
\begin{prop}\label{prop5.20}
$\cvh$ and $\ctvh$  are closed cones.
\end{prop}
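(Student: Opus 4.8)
The plan is to show that $\cvh$ (and identically $\ctvh$) is closed by exploiting finite-dimensionality. The cone lives in the finite-dimensional real vector space $\rvh$, so a standard fact is that a finitely generated cone is automatically closed; the subtlety is that the generating set here is parametrized by $u \in \calvhoj$, which is infinite-dimensional as a set of functions (the auxiliary index $J$ and the choice of $u$ range over an infinite family). So the first step is a dimension-reduction: I would apply Lemma~\ref{lem5.10} with $\M = \c^J$ and with $F(y,x) = \idd - \delta(y)^*\delta(x)$ viewed as an $\l(\c^J)$-valued graded function on $E^{[2]}$, to conclude that \emph{every} element of $\cvh$ can be written as a sum of exactly $N = \dim(\calvhoj \otimes \c^J)$-many terms $f_i(y)^*[\idd - \delta(y)^*\delta(x)]f_i(x)$ with $f_i$ ranging over the fixed finite-dimensional space $\calvhoj$ (with values in $\c^J$). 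Wait---I must be careful: Lemma~\ref{lem5.10} produces a representation with a \emph{fixed} number $N$ of summands but the $f_i$ still range over an infinite set; the gain is that $N$ is now bounded, uniformly.

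With the number of summands bounded by $N$, the second step is the usual compactness argument. Suppose $h^{(k)} \to h$ in $\rvh$ with each $h^{(k)} \in \cvh$, so $h^{(k)}(y,x) = \sum_{i=1}^N f_i^{(k)}(y)^*[\idd - \delta(y)^*\delta(x)]f_i^{(k)}(x)$. I need a uniform bound on $\|f_i^{(k)}\|$ to extract a convergent subsequence. This is where the strict inequality built into the definition is not yet used for $\cvh$, so I would instead get the bound directly: evaluating on the diagonal, $h^{(k)}(x,x) = \sum_i f_i^{(k)}(x)^*[\idd - \delta(x)^*\delta(x)]f_i^{(k)}(x) \geq (1-\rho^2)\sum_i f_i^{(k)}(x)^*f_i^{(k)}(x)$ since $\|\delta(x)\| \leq \rho < 1$ on $E$ (this is exactly the role of \eqref{eqgx1}---note $\rho < 1$ because $E \subset \gdel$ and $E$ is finite). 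Since $h^{(k)}(x,x)$ is convergent hence bounded, and $1 - \rho^2 > 0$, we get $\sum_i \|f_i^{(k)}(x)\|^2$ bounded uniformly in $k$, for every $x \in E$; as $E$ is finite this gives a uniform bound on all the $f_i^{(k)}$ in the finite-dimensional space of $\c^J$-valued functions on $E$.

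The third step is routine: pass to a subsequence so that each $f_i^{(k)} \to f_i$ in $\calvhoj$ (using finite-dimensionality to guarantee the limit is again an $\s$-invariant partial nc-function---the defining linear relations \eqref{4.20} and $\s$-invariance are closed conditions). Then $h^{(k)}(y,x) \to \sum_{i=1}^N f_i(y)^*[\idd - \delta(y)^*\delta(x)]f_i(x)$ pointwise on the finite set $E^{[2]}$, and this limit equals $h$; hence $h \in \cvh$. The argument for $\ctvh$ is verbatim the same with $\idd$ replaced by $\tau^2 \idd$ and using $\|\delta(x)\| \leq \rho < \tau$, which gives the coercivity constant $\tau^2 - \rho^2 > 0$ in place of $1 - \rho^2$.

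I expect the main obstacle to be the bookkeeping in the dimension-reduction step: making sure Lemma~\ref{lem5.10} applies cleanly with the correct identification of $\M$, $F$, and the space $\calvhm$ in its statement (here one wants $\M = \c^J$ so that $\calvhm = \calvhoj$, and $F(y,x) = \idd_{\c^J} - \delta(y)^*\delta(x)$, which is indeed a graded $\l(\c^J)$-valued function on $E^{[2]}$ since $\delta$ is a $J\times J$ matrix of free polynomials). Once the number of summands is pinned to $N$, the rest is the standard ``finitely generated cone in finite dimensions is closed'' phenomenon, with the coercivity estimate $\|\delta(x)\| \leq \rho < 1$ on the finite set $E$ supplying the needed uniform bound on the generators.
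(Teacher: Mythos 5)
Your proof is correct and follows essentially the same route as the paper's: Lemma~\ref{lem5.10} (with $\M = \c^J$ and $F(y,x) = \idd - \delta(y)^*\delta(x)$) to pin the number of summands at a fixed $N$, the coercivity estimate coming from $\rho < \tau \le 1$ to bound the generators uniformly on the finite set $E$, and compactness in the finite-dimensional space of $\s$-invariant partial nc-functions. The only cosmetic difference is that the paper argues for $\ctvh$ and then sets $\tau = 1$ to get $\cvh$, while you treat $\cvh$ directly with the constant $1 - \rho^2$.
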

\begin{proof}
By Lemma~\ref{lem5.10}, any element of $\ctvh$ can be represented as a sum 
\be
\label{eqex1}
 \sum_{i=1}^N u_i(y)^*[\tau^2\, \idd - \d (y)^*\d (x) ]u_i(x).
\ee
Suppose a sequence of sums of the form \eqref{eqex1} converges to some element $h(y,x)$  in $\rvh$.
Since $ \rho  < \tau $, we know that each of the
individual  functions 
$u_i$ must eventually satisfy
\[
\| u_i(x) \|^2 \ \leq \ 2\, \frac{1}{\tau^2 - \rho^2} h(x,x). 
\]
So by compactness, a subsequence of the sequence will converge to another sum of the form
\eqref{eqex1}. Letting $\tau =1$ gives the proof for $\cvh$.
\end{proof}
\begin{prop}\label{prop5.30}
$\pvh \subseteq \cvh \subseteq\ctvh$
\end{prop}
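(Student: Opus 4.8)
The plan is to establish the two inclusions separately, each by direct algebraic manipulation. The containment $\cvh \subseteq \ctvh$ is the easier one, so I would dispatch it first. Recall that $\cvh$ is generated by terms $u(y)^*[\idd - \d(y)^*\d(x)]u(x)$ while $\ctvh$ is generated by terms $u(y)^*[\tau^2\idd - \d(y)^*\d(x)]u(x)$, with $\rho < \tau < 1$. The idea is to write
\[
\idd - \d(y)^*\d(x) \= \bigl(\tau^2\idd - \d(y)^*\d(x)\bigr) + (1-\tau^2)\idd,
\]
so that a generator of $\cvh$ becomes a generator of $\ctvh$ plus $(1-\tau^2)\,u(y)^*u(x)$. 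Since $1-\tau^2 > 0$, the term $(1-\tau^2)\,u(y)^*u(x)$ is, after absorbing the scalar into $u$, exactly a term of the form appearing in $\pvh$ (with $\M = \c^J$ and using the function $\sqrt{1-\tau^2}\,u$). So it remains only to observe $\pvh \subseteq \ctvh$, which will follow once the second inclusion is in hand, or can be seen directly: $u(y)^*u(x)$ with $u \in \calvhoj$ can be written as $v(y)^*[\tau^2\idd - \d(y)^*\d(x)]v(x)$ is \emph{not} immediate, so it is cleaner to just prove $\pvh \subseteq \cvh$ first and then use $\cvh \subseteq \ctvh$ together with the decomposition above to get $\cvh \subseteq \ctvh$ honestly — wait, that is circular; instead I will prove $\pvh \subseteq \cvh$, then prove $\cvh \subseteq \ctvh$ by the displayed decomposition plus $\pvh \subseteq \ctvh$, and prove $\pvh \subseteq \ctvh$ by the same argument as $\pvh\subseteq\cvh$ with $\tau$ in place of $1$.

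For $\pvh \subseteq \cvh$: a generator of $\pvh$ has the form $h(y,x) = \sum_{i=1}^\sigma f_i(y)^*f_i(x)$ with $f_i \in \calvhc$, i.e.\ scalar-valued $\s$-invariant partial nc-functions times vectors in $\h$. I want to realize such an $h$ as a member of the cone generated by $u(y)^*[\idd - \d(y)^*\d(x)]u(x)$. The key point is that on $E$ we have $\|\d(x)\| \le \rho < 1$, so $\idd - \d(y)^*\d(x)$ is, in an appropriate sense, "bounded below" on the diagonal. Concretely, for a single scalar $\s$-invariant partial nc-function $f$ I would look for $u \in \calvhoj$ with $u(y)^*[\idd - \d(y)^*\d(x)]u(x) = f(y)^*f(x)$; taking $u(x) = f(x)\otimes w$ for a suitable constant vector $w \in \c^J$ reduces this to $\|w\|^2 - \|\d(x)w\|^2 \equiv 1$, which is generally false. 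So instead I would use that $\pvh$ is generated by the $f(y)^*f(x)$ and exploit a \emph{telescoping / Neumann series} trick: since $\|\d(x)\|\le\rho<1$ on the finite set $E$, the geometric-type identity lets one write $f(y)^*f(x)$ as a finite positive combination. The cleanest route: show $f(y)^*f(x) = \sum_{k\ge 0} f(y)^* \d(y)^{*k}[\idd-\d(y)^*\d(x)]\d(x)^k f(x)$ formally, truncate, and control the tail using $\rho<1$; each truncated term $\d(x)^k f(x)$, suitably interpreted via the action of $\d$ on $\nc_{\c^J}$-valued functions, lies in $\calvhoj$ (as $\s$-invariance and the direct-sum property are preserved by multiplication by $\d$ and by $f$). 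This expresses $f(y)^*f(x)$ as a limit of elements of $\cvh$, and since $\cvh$ is closed (Proposition~\ref{prop5.20}), $f(y)^*f(x)\in\cvh$. Taking finite sums gives $\pvh\subseteq\cvh$.

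The main obstacle, I expect, is the bookkeeping in that telescoping argument: verifying that the functions $x \mapsto \d(x)^k f(x)$ (more precisely, the iterated application of the operator $u \mapsto \d u$ from Definition~\ref{def5.05}/\eqref{eqhb1}) genuinely land in $\calvhoj$ as $\s$-invariant partial nc-functions on $E$, and that the tail of the series is norm-controlled uniformly on the finite set $E^{[2]}$ using only $\rho<1$. Once the closedness of $\cvh$ and $\ctvh$ from Proposition~\ref{prop5.20} is invoked, the limiting step is clean; and the passage $\cvh\subseteq\ctvh$ is then just the additive decomposition $\idd-\d(y)^*\d(x) = (\tau^2\idd-\d(y)^*\d(x)) + (1-\tau^2)\idd$ combined with $\pvh\subseteq\ctvh$ (proved verbatim as $\pvh\subseteq\cvh$, replacing the bound $1$ by $\tau$ throughout and using $\rho<\tau$). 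This completes the chain $\pvh\subseteq\cvh\subseteq\ctvh$.
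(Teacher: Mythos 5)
Your proposal is correct and follows essentially the same route as the paper: the telescoping (Neumann-series) identity expressing $f(y)^*f(x)$ as a limit of finite sums of cone generators, combined with the closedness of $\cvh$ and $\ctvh$ from Proposition~\ref{prop5.20}, gives $\pvh\subseteq\ctvh$ (and $\pvh\subseteq\cvh$ with $\tau=1$), and the additive splitting of $\idd-\d(y)^*\d(x)$ then gives $\cvh\subseteq\ctvh$. The paper implements the $\tau$-version exactly as you suggest, by rescaling to $g=f/\tau$ and $\d/\tau$ so that each summand is literally a generator of $\ctvh$ and the tail is controlled by $\rho<\tau$.
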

\begin{proof}
To prove $\pvh \subseteq \ctvh$,
we must show that for any $f \in \calvhc$, the function $f(y)^* f(x)$ is in $\ctvh$.
Let $g(x) $ be  $ \frac{1}{\tau}f(x)$.
Let $h_\sigma \in \ctvh$ be 
\begin{eqnarray*}
h_\sigma(y,x) &\=&
\sum_{j=0}^\sigma
g(y)^*(\delta(y)^*/\tau)^j [\tau^2 \,  \id{{\c^J}} - \d(y)^* \d(x) ] 
(\delta(x)/\tau)^j g(x) \\
&=& f(y)^* f(x) - g(y)^*(\d(y)^*/\tau)^{\sigma +1}  
(\d(x)/\tau)^{\sigma +1}  g(x).
\end{eqnarray*}
As $\d/\tau$ is a strict contraction on $E$, $h_\sigma(y,x)$
converges to $f(y)^* f(x)$. By Proposition~\ref{prop5.20}, we are done.
Letting $\tau=1$, we get $\pvh \subseteq \cvh$.

To show $ \cvh \subseteq\ctvh$, observe that
\be
\label{eqgx2}
u(y)^*[ \idd - \d (y)^*\d (x) ]u(x) \=
u(y)^*(\tau^2 \, \idd - \d (y)^*\d (x) )u(x)
+ (1-\tau^2) u(y)^* u(x) .
\ee
The first term on the right in \eqref{eqgx2} is in $\ctvh$ by definition, and
the second since $\pvh$ is.
\end{proof}

\begin{lem}\label{lemgx1} For each $\tau$ in $(\rho,1)$, the function $\Theta(y,x) $ is in $\ctvh$.

\end{lem}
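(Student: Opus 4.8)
The plan is to produce, for each fixed $\tau \in (\rho,1)$, an explicit representation of $\Theta(y,x)$ as an element of $\ctvh$, using the realization structure that is available for the multiplier coming from the inequality \eqref{eq6w1}. The starting point is the hypothesis $\Psi(x)^*\Psi(x) - \Phi(x)^*\Phi(x) \geq 0$ on $\gdel$, which on the partial nc-set $E$ gives a contractive multiplication operator: if one works in the Hilbert space $\calv \otimes \h$ (or rather the finite-dimensional space $\calvhm$-type spaces built from $E$) with the tautological $d$-tuple $X_\calv$ as in \eqref{4.114}--\eqref{4.115}, then by Proposition~\ref{prop4.20} the operators $M_\Psi = \Psi^\sss(X_\calv)$ and $M_\Phi = \Phi^\sss(X_\calv)$ satisfy $M_\Psi^* M_\Psi \geq M_\Phi^* M_\Phi$, and moreover $\|\delta^\sss(X_\calv)\| \leq \rho < \tau < 1$ since $E \subseteq \gdel$ and actually $\max\{\|\delta(x)\| : x \in E\} = \rho$.

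The key step is a lurking-isometry / Cholesky argument. From $M_\Psi^*M_\Psi - M_\Phi^*M_\Phi \geq 0$ one wants to manufacture a column $u \in \calvhoj$ with
\[
\Theta(y,x) = u(y)^*[\tau^2\,\idd - \delta(y)^*\delta(x)]u(x).
\]
The natural candidate is the Agler-type kernel decomposition: set $M = \delta^\sss(X_\calv)$, which is a strict contraction with $\|M\| \leq \rho < \tau$, so $\tau^2 I - M^*M \geq (\tau^2-\rho^2) I > 0$ is boundedly invertible, and the operator $(\tau^2 I - M^*M)^{-1/2}$ makes sense. One then checks that $\Theta$, viewed as the kernel $\langle \Theta(\cdot,\cdot)\, \cdot\,,\cdot\rangle$ on the appropriate reproducing-kernel space over $E^{[2]}$, factors as $u^*(\tau^2 I - \delta^*\delta)u$ for $u$ built from $(\tau^2 I - M^*M)^{-1/2}$ composed with the positive square root of $M_\Psi^*M_\Psi - M_\Phi^*M_\Phi$; the $\s$-invariance and partial-nc properties of $u$ follow because everything is assembled functorially from $X_\calv$ and from functions ($\Psi$, $\Phi$, $\delta$) that already respect those structures, using Lemma~\ref{lem2.30}. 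Finally, Lemma~\ref{lem5.10} lets one trim the resulting (a priori infinitely- or large-indexed) sum down to $N = \dim(\calvhm)$ terms, landing in $\ctvh$ as defined in Definition~\ref{def5.05}.

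The main obstacle I anticipate is the bookkeeping needed to pass from the operator inequality on $\calv\otimes\h$ back to a \emph{pointwise} identity on $E^{[2]}$ of exactly the shape $u(y)^*[\tau^2\idd - \delta(y)^*\delta(x)]u(x)$, with $u$ genuinely a $\s$-invariant partial nc-function (i.e. an element of $\calvhoj$) rather than merely a formal Hilbert-space factorization. This is the standard gap between "model" statements and "realization" statements, and here it is bridged by the well-organized structure of $(E,\s)$: Proposition~\ref{prop4.20} identifies evaluation-at-$E$ with applying functions to $X_\calv$, so an operator-theoretic factorization through $(\tau^2 I - M^*M)^{-1/2}$ automatically transcribes to the required pointwise form. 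One should double-check that the finite-dimensionality of $\h$ (and of $E$) is used exactly where needed — to guarantee the relevant spaces are finite-dimensional so that square roots, the compactness argument of Proposition~\ref{prop5.20}, and the dimension count in Lemma~\ref{lem5.10} all apply without subtlety. A cleaner alternative, if the direct factorization is awkward, is to mimic the geometric-series trick from the proof of Proposition~\ref{prop5.30}: write $\Theta(y,x) = \sum_{j\geq 0} v(y)^*(\delta(y)^*/\tau)^j[\tau^2\idd - \delta(y)^*\delta(x)](\delta(x)/\tau)^j v(x)$ where $v$ is a square root of $\Theta$ itself (which exists as a partial nc-function since $\Theta(x,x) = \Psi(x)^*\Psi(x) - \Phi(x)^*\Phi(x) \geq 0$), check the series telescopes to $\Theta$ because $\delta/\tau$ is a strict contraction on $E$, and conclude via Proposition~\ref{prop5.20} that the limit lies in the closed cone $\ctvh$.
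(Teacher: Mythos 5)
Your proposal does not close the real gap in this lemma, which is to pass from the purely diagonal hypothesis $\Theta(x,x)\ge 0$ on $\gdel$ to a genuinely two-variable statement about $\Theta(y,x)$ for $y\ne x$. Your primary route founders on the transcription step. The operators $(\tau^2 I - M^*M)^{-1/2}$ and $(M_\Psi^*M_\Psi-M_\Phi^*M_\Phi)^{1/2}$ are adjoint-dependent objects: they require a choice of inner product on $\calvhc$, which you never specify, and on which the validity of both the inequality $M_\Psi^*M_\Psi\ge M_\Phi^*M_\Phi$ and the norm bound $\|\delta^\sss(X_\calv)\|\le\rho$ depends (the pointwise bound $\max_{x\in E}\|\delta(x)\|=\rho$ controls the operator norm of $M_\delta$ only for special inner products). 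Moreover these square roots are not of the form $\phi^\sss(X_\calv)$ for any nc-function $\phi$, so Proposition~\ref{prop4.20}, which identifies only \emph{multiplication} operators with evaluations, cannot convert them into a partial nc-function $u(x)$ on $E$; and a single operator inequality on one Hilbert space cannot in any case determine the off-diagonal values $\Theta(y,x)$. Your ``cleaner alternative'' is worse: it presumes a factorization $\Theta(y,x)=v(y)^*v(x)$, i.e.\ that $\Theta\in\pvh$. That is false in general --- a difference $\Psi(y)^*\Psi(x)-\Phi(y)^*\Phi(x)$ of positive kernels with nonnegative diagonal need not be a positive kernel (already for $\Psi=1$, $\Phi=\phi$ scalar, and the two points $\phi=\tfrac12$, $\phi=-\tfrac12$, the matrix $[1-\overline{\phi(y)}\phi(x)]$ has negative determinant) --- and if it were true the lemma would be an immediate consequence of Proposition~\ref{prop5.30}, with no work left to do.

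The paper's argument runs in the opposite direction, by duality. Since $\ctvh$ is a closed cone (Proposition~\ref{prop5.20}), Hahn--Banach reduces the lemma to showing $L(\Theta)\ge 0$ for every functional $L$ that is nonnegative on $\ctvh$. Each such $L$ supplies, via a GNS-type construction $\langle f,g\rangle_L = L^\sharp(g(y)^*f(x))$ (perturbed by $\vare$ times a trace to make it definite), an inner product on the finite-dimensional space $\calvhc$ with respect to which multiplication by $\delta$ is a strict contraction --- this contractivity is itself deduced from the positivity of $L$ on the cone, not from the pointwise bound on $E$. Hence the tuple $M_x$ is unitarily equivalent to a point $N\in\gdel$, and the hypothesis \eqref{eq6w1} applied at $N$ yields $L(\Theta)\ge 0$ after letting $\vare\to 0$. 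In other words, the positivity hypothesis is exploited one inner product at a time, with the inner products generated by the dual cone, rather than through a single privileged operator factorization as in your proposal.
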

\begin{proof}
By Proposition \ref{prop5.20}, $\ctvh$ is a closed cone in $\rvh$. Therefore, by the Hahn-Banach Theorem the lemma will follow if we can show that \be\label{5.20}
L(\Theta(y,x)) \ge 0
\ee
whenever
\be\label{5.30}
L \in \rvh^* \text{ and } L(h) \ge 0 \text{ for all }h \in \ctvh.
\ee
Accordingly assume that \eqref{5.30} holds. Define $L^\sharp \in \hvh^*$ by the formula
\[
L^\sharp (h(y,x)) = L(\frac{h(y,x) + h(x,y)^*}{2})+iL(\frac{h(y,x) - h(x,y)^*}{2i}),
\]
and then define a sesquilinear form on $\calvhc$ by the formula,
\be
\label{eqxe2}
\langle f,g\rangle_L = L^\sharp(g(y)^*f(x)),\qquad f,g \in \calvhc.
\ee

Observe that Proposition \ref{prop5.30} implies that $f(y)^*f(x) \in \ctvh$ whenever $f \in \calvhc$. Hence, \eqref{5.30} implies that
\[
\ip{f}{f}_L \ge 0
\]
for all $f \in \calvhc$, i.e., $\ip{\cdot}{\cdot}_L$ is a pre-inner product on $\calvhc$.
We make this into an inner product by choosing $\vare > 0$, and defining
\[
\langle f,g\rangle_\vare
\=
\langle f,g\rangle_L \ + \
\vare \sum_{x \in E} {\rm tr} (g(x)^* f(x) ) .
\]

We let $\htwole$ denote the Hilbert space $\calvhc$ equipped with the  inner product
$\langle \cdot, \cdots \rangle_\vare$.

Now observe that for each $r=1,\ldots,d$, $x^r \in \calv$, so
$X^r_\calv $ 
is a well defined operator on $\calvhc$,
where $X_\calv$ is the linear transformation defined in \eqref{4.114} and \eqref{4.115}.
 \eqref{eqhb1} implies that 
 $\d$ is a well-defined operator from $\calvhc \otimes \c^J$ to $\calvhc  \otimes \c^J$.

If $f \in \calvhoj$, it follows using \eqref{eqgx1} and the fact that
 $f(y)^*(\tau^2 \idd -\d(y)^*\d(x))f(x) \in \ctvh$ that
\begin{eqnarray*}
 \norm{\tau f }_{H_L^{2(J)}}^2 - \norm{\d f}_{H_L^{2(J)}}^2 
&=&\ \tau^2 L^\sharp(f(y)^*f(x)) - L^\sharp((\d(y) f(y))^* \d(x) f(x)) \\
&&\ + \ \vare \sum_{x \in E} {\rm tr}( \tau^2 f(x)^* f(x) -  f(x)^* \d(x)^* \d(x)  f(x))
\\
&=&\ L^\sharp(f(y)^*(\tau^2 \idd -{\d(y)}^*\d(x))f(x))\\
&&\ + \ \vare  \sum_{x \in E} {\rm tr}(f(x)^* ( \tau^2 - \d(x)^* \d(x))  f(x) ) \\
&\ge&\ 0.
\end{eqnarray*}
Hence, the formula,
\[
M_\delta (f) = \delta f ,\qquad f \in \calvhoj,
\]
defines a strict contraction from $\htwolej$ to $\htwolej$.
Let $M_x=(M_{x^1},\ldots,M_{x^d})$ act on $\htwole$, and 
let $\iota:\calvhc \to \htwole$ denote the canonical identity map.
Then
\[
M_x  \ \iota \= \iota \ X_\calv  .
\]
Hence by Lemma \ref{lem2.30}, for every $\phi$ that is nc on an nc-domain containing $E$,
\[
\phi^{\sss}(M_x ) \iota = \iota \phi^{\sss}( X_\calv ).
\]
Since $\htwole$ is finite dimensional, we have that $M_x$ is unitarily equivalent to some
point $N$ in $\m^d$. Since $\d$ is nc, we have that $\d^\sss(M_x ) = M_\d$ 
 is unitarily equivalent to $\d(N)$.
As $M_\delta$ is a strict contraction, it follows that $N \in \gdel$.

It follows that 
$\Psi^\sss(M_x )$ is unitarily equivalent to $\Psi(N)$, and
$\Phi^\sss(M_x )$ is unitarily equivalent to $\Phi(N)$,
so by \eqref{eq6w1}, multiplication by  $\Phi^\sss(M_x )$ applied to any vector
yields something of smaller norm than multiplication by  $\Psi^\sss(M_x )$.  
Both of these matrices are in $\L(  \htwole \otimes \h, \htwole \otimes \k)$.

Let $\mu = \dim(\h)$, and choose a basis $e_1, \dots, e_\mu$ for $\h$.
Let $f = (f_1 , \dots , f_\mu)^t \in \htwole \otimes \h$
be the vector where $f_j$ is 
 the constant function $e_j^*$ from $\h$ to $\c$.
We get
\beq
\lefteqn{
\la  \Phi^\sss(M_x ) f,   \Phi^\sss(M_x  ) f \ra_{\htwole \otimes \k_2} }\\
\\
 &\=& 
\la \begm
(\Phi_{11}(x) &  \hdots & \Phi_{1 \mu}(x)  ) \\
(\Phi_{2 1}(x) & \hdots & \Phi_{2 \mu }(x) )\\
&\vdots
\endm
\begm
f_1\\
\vdots\\
f_\mu
\endm
 \ , \
\begm
\Phi_{11}(y) &  \hdots & \Phi_{1\mu }(y) ) \\
(\Phi_{2 1}(y) &  \hdots & \Phi_{2 \mu }(y) )\\
&\vdots
\endm
\begm
f_1\\
\vdots\\
f_\mu
\endm
 \ra_{\htwole \otimes \k_2}
\\
 &\=& 
\la \begm
\sum_{j=1}^\mu \Phi_{1j}(x) e_j^* \\
\sum_{j=1}^\mu \Phi_{2j}(x) e_j^* \\
\vdots
\endm
 \ , \
\begm
\sum_{i=1}^\mu \Phi_{1i}(y) e_i^* \\
\sum_{i=1}^\mu \Phi_{2i}(y) e_i^* \\
\vdots
\endm
 \ra_{\htwole \otimes \k_2}
\\
&=&
\sum_k  
\la 
\sum_{j=1}^\mu \Phi_{kj}(x) e_j^*,
\sum_{i=1}^\mu \Phi_{ki}(y) e_i^*
 \ra_{\htwole}\\
&=&
L^\sharp(\Phi(y)^* \Phi(x)) \ + \
\vare \sum_{x \in E} {\rm tr}(\Phi(x)^* \Phi(x))
\eeq
is smaller than the same expression with $\Psi$ in lieu of $\Phi$.
Letting $\vare \to 0$, we get
\[
L( \Theta(y,x) ) \geq 0 ,
\]
as desired.
\end{proof}


\begin{prop}\label{prop5.40}
\[
\Theta(y,x) \= \Psi(y)^*\Psi(x)  -  \Phi(y)^* \Phi(x) \in \cvh.
\]
\end{prop}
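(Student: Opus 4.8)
The content of Proposition~\ref{prop5.40} is to upgrade Lemma~\ref{lemgx1}, which places $\Theta$ in the cone $\ctvh$ for \emph{every} $\tau\in(\rho,1)$, to the assertion that $\Theta$ lies in the ``$\tau=1$'' cone $\cvh$. Note that $\cvh\subseteq\ctvh$ by Proposition~\ref{prop5.30}, and in fact the cones $\ctvh$ shrink as $\tau\uparrow 1$ (rewrite a generator $u(y)^*[\tau_1^2\,\idd-\delta(y)^*\delta(x)]u(x)=u(y)^*[\tau_2^2\,\idd-\delta(y)^*\delta(x)]u(x)+(\tau_1^2-\tau_2^2)u(y)^*u(x)$ and use $\pvh\subseteq\ctvh$). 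So $\cvh$ is not literally the intersection of the $\ctvh$, and a genuine limiting argument letting $\tau\to 1$ is needed. The plan is to run the compactness argument from the proof of Proposition~\ref{prop5.20}.

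First I would fix a sequence $\tau_k\uparrow 1$ with $\rho<\tau_k<1$. For each $k$, Lemma~\ref{lemgx1} gives $\Theta\in\ctvh$ for the value $\tau_k$, and then Lemma~\ref{lem5.10}, applied with $\M=\c^J$ and $F(y,x)=\tau_k^2\,\idd-\delta(y)^*\delta(x)$ (a graded $\l(\c^J)$-valued function on $E^{[2]}$), lets one write
\[
\Theta(y,x)=\sum_{i=1}^{N}u_i^{(k)}(y)^*\big[\tau_k^2\,\idd-\delta(y)^*\delta(x)\big]u_i^{(k)}(x),
\qquad N=\dim(\calvhoj),
\]
with $u_i^{(k)}\in\calvhoj$ for $i=1,\dots,N$, valid on each $E\cap\m_m^d$.

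Next I would establish a uniform bound on the $u_i^{(k)}$. Setting $y=x$ and using that $\delta(x)^*\delta(x)\le\rho^2\,\idd$ for all $x\in E$ (this is what $\rho$ is, by \eqref{eqgx1}), one gets
\[
\Theta(x,x)\ \ge\ (\tau_k^2-\rho^2)\sum_{i=1}^{N}u_i^{(k)}(x)^*u_i^{(k)}(x)\ \ge\ (\tau_1^2-\rho^2)\sum_{i=1}^{N}u_i^{(k)}(x)^*u_i^{(k)}(x),
\]
so $\sum_{i=1}^{N}\|u_i^{(k)}(x)\|^2\le\|\Theta(x,x)\|/(\tau_1^2-\rho^2)$ for every $k$ and every $x\in E$. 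Since $E$ is finite and $\calvhoj$ is finite dimensional, the $N$-tuples $(u_1^{(k)},\dots,u_N^{(k)})$ lie in a fixed compact subset of $(\calvhoj)^N$. Passing to a subsequence, $u_i^{(k_j)}\to u_i$ in $\calvhoj$ for each $i$; since $\tau_{k_j}^2\to 1$ and $E^{[2]}$ is finite, the right-hand sides converge in $\hvh$ to $\sum_{i=1}^{N}u_i(y)^*[\idd-\delta(y)^*\delta(x)]u_i(x)$, while the left-hand side is constantly $\Theta(y,x)$. Hence $\Theta(y,x)=\sum_{i=1}^{N}u_i(y)^*[\idd-\delta(y)^*\delta(x)]u_i(x)$, which by Definition~\ref{def5.05} exhibits $\Theta\in\cvh$.

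The one delicate point — and it is exactly the point handled in the proof of Proposition~\ref{prop5.20} — is the uniform bound: it works precisely because the $\tau_k$ are kept bounded away from $\rho$ (so $\tau_k^2-\rho^2$ is bounded below), and because restricting to the diagonal turns the indefinite kernel $\tau_k^2\,\idd-\delta(y)^*\delta(x)$ into the uniformly strictly positive operator $\tau_k^2\,\idd-\delta(x)^*\delta(x)$. Everything else is the finite-dimensional compactness already used above; one could alternatively phrase the whole argument by noting that $\Theta$ lies in the image of the compact set $\{(u_1,\dots,u_N)\in(\calvhoj)^N:\sum_i\|u_i(x)\|^2\le\|\Theta(x,x)\|/(\tau_1^2-\rho^2)\ \forall x\in E\}$ under each of the continuous maps $(u_i)\mapsto\sum_i u_i(y)^*[\tau_k^2\,\idd-\delta(y)^*\delta(x)]u_i(x)$, and these maps converge uniformly on that set to the $\tau=1$ map as $k\to\infty$.
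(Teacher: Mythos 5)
Your proposal is correct and follows essentially the same route as the paper: invoke Lemma~\ref{lemgx1} to get $\Theta\in\ctvh$ for each $\tau$, normalize to $N$ summands via Lemma~\ref{lem5.10}, and extract a convergent subsequence by compactness as $\tau\to 1$. You have simply written out in full the diagonal estimate $\Theta(x,x)\ge(\tau^2-\rho^2)\sum_i u_i^{(\tau)}(x)^*u_i^{(\tau)}(x)$ that the paper leaves implicit in its reference to the proof of Proposition~\ref{prop5.20}.
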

\bp
By Lemma~\ref{lemgx1}, we have 
$\Theta \in \ctvh$ for all $\tau$ between $\rho$ and $1$.
By Lemma~\ref{lem5.10}, for each $\tau$ we have 
\be
\label{eqey1}
\Theta(y,x) \=
 \sum_{i=1}^N u^{(\tau)}_i(y)^*[\tau^2\, \idd - \d (y)^*\d (x) ]u^{(\tau)}_i(x).
\ee
As in the proof of Proposition~\ref{prop5.20}, we can use  compactness to extract a sequence 
$\tau_i$ so that the right-hand side of \eqref{eqey1}
converges to an element in $\cvh$.
\ep

\subsection{Partial nc-Models}
\label{ssecpn}


\begin{defin}\label{def5.10}
Let $(E,\s)$ be a well organized pair and let
\be
\label{eqgw8}
h(y,x) = \sum_{i=1}^\sigma f_i(y)^* g_i(x),
\ee
 where 
each $f_i$ and $g_i$ are graded $\L(\h,\k_i)$-valued functions on $E$. Assume $E \subset \gdel$.
A \emph{$\delta$-model for $h$} is a  graded $\l(\h,\M\otimes \c^J)$-valued function $u$ on $E$
such that
\be\label{7.10}
h(y,x) \= u(y)^*[ 1-\delta(y)^*\delta(x) ]u(x), \qquad x,y \in E \cap G_\delta
\ee
for all $x,y \in E$. If in addition, $u$ is a partial nc-function, we say the model is \emph{partial nc} and if the model is $\s$-invariant, we say the model is \emph{$\s$-invariant}. If $\M$ is finite dimensional, we say the model is finite dimensional.

If $v$ is a graded $\l(\h,\k)$-valued function on $E$, we say $v$ has a $\d$-model if $[\idd - v(y)^* v(x)]$ does.
\end{defin}
Before continuing we make a few clarifying remarks about the meaning of the formula in \eqref{7.10}. To say that $u$ is an $\s$-invariant partial $\l(\h,\M \otimes \c^J)$-valued nc-function means that
\be\label{5.50}
\forall_{m \le n}\ \forall_{x \in E \cap \m_m^d}\ u(x) \in \l(\c^m \otimes \h,\c^m \otimes \M \otimes \c^J),
\ee
\be\label{5.60}
\forall_{x,y \in E}\ x \oplus y \in E \implies  u(x \oplus y) = u(x) \oplus u(y), \qquad \text{and}
\ee
\be\label{5.70}
\forall_{m\le n}\ \forall_{x \in E \cap \m_m^d}\ \forall_{S \in \s \cap \invm}\ S^{-1} x S \in E \implies u(S^{-1} x S) = (S^{-1} \otimes \idd_{\M \otimes \c^J})u(x) (S\otimes \idh).
\ee
We denote the collection of functions $u$ satisfying these axioms by $\calv_{\l(\h,\M \otimes \c^J)}$.
In the special case when $\M = \ltwo$ or $\ltwo_N$,
we say the model is \emph{special}. Clearly, as $E$ is finite, if a graded function $v$ has a partial nc-model, then $v$ has a special partial nc-model.
\begin{prop}\label{prop5.50}
Let $(E,\s)$ be a well organized pair, with $E \subseteq G_\delta$.
If $\Theta(y,x)$ is as in \eqref{eq6w2} and is non-negative on $\gdel$ (\ie it satisfies 
\eqref{eq6w1}),   then $\Theta|_{E^{[2]}}$ has an $\s$-invariant finite dimensional partial nc-model.
\end{prop}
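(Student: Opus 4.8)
The plan is to combine Proposition~\ref{prop5.40}, which tells us that $\Theta \in \cvh$, with the structure of the cone $\cvh$ and the isomorphism machinery of Section~\ref{secpncs} to manufacture an $\s$-invariant partial nc-model. First I would invoke Proposition~\ref{prop5.40} to write
\[
\Theta(y,x) \= \sum_{i=1}^N v_i(y)^*[\,\idd - \delta(y)^*\delta(x)\,]v_i(x),
\qquad x,y \in E\cap\m_m^d,\ 1\le m\le n,
\]
where each $v_i \in \calvhoj$, i.e.\ each $v_i$ is an $\s$-invariant $\l(\h,\c^J)$-valued partial nc-function on $E$ (this is exactly what membership in the cone $\cvh$, after applying Lemma~\ref{lem5.10}, gives us). Setting $\M = \c^N$ and stacking, define $u(x) = \bigoplus_{i=1}^N v_i(x)$, viewed as an element of $\calv_{\l(\h,\M\otimes\c^J)}$. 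Then
\[
u(y)^*[\,\idd - \delta(y)^*\delta(x)\,]u(x)
\= \sum_{i=1}^N v_i(y)^*[\,\idd - \delta(y)^*\delta(x)\,]v_i(x)
\= \Theta(y,x),
\]
so $u$ is a $\delta$-model for $\Theta|_{E^{[2]}}$ in the sense of Definition~\ref{def5.10}, and it is finite dimensional since $\M = \c^N$.

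The remaining work is to check that $u$ inherits the partial-nc and $\s$-invariance properties from the $v_i$, which is where I expect the only real (though still routine) effort to lie. Since each $v_i$ lies in $\calvhoj$, each satisfies \eqref{5.60} and \eqref{5.70} with $\M$ replaced by $\c$; a direct-sum of functions satisfying these identities again satisfies them, because $\oplus$ of direct sums reorganizes as a direct sum (after the obvious identification of $\c^{m_1+m_2}\otimes\M\otimes\c^J$ with the direct sum), and conjugation by $S\otimes\idd$ passes through the direct sum over $i$. Hence $u \in \calv_{\l(\h,\M\otimes\c^J)}$, so the model is partial nc and $\s$-invariant. One should also note that the defining identity \eqref{7.10} is only asserted on $E\cap G_\delta = E$ (since $E\subseteq G_\delta$ by hypothesis), so no extension issue arises.

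The main obstacle, such as it is, is purely bookkeeping: making sure the tensor identifications line up so that a direct sum of $\l(\h,\c^J)$-valued $\s$-invariant partial nc-functions is genuinely an $\l(\h,\c^N\otimes\c^J)$-valued $\s$-invariant partial nc-function, with the correct reordering of coordinates. The substantive content — that $\Theta$ admits \emph{some} representation as a sum $\sum v_i(y)^*[\idd-\delta(y)^*\delta(x)]v_i(x)$ with the $v_i$ in the invariant partial-nc class — has already been established in Proposition~\ref{prop5.40} (via the Hahn--Banach argument of Lemma~\ref{lemgx1} and the compactness argument closing the cone in Proposition~\ref{prop5.20}), so this proposition is essentially a packaging statement that converts that cone membership into the language of $\delta$-models.
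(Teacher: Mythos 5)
Your proposal is correct and follows the paper's own argument exactly: the paper likewise invokes Proposition~\ref{prop5.40} to place $\Theta$ in $\cvh$ and then applies the definition of the cone together with Lemma~\ref{lem5.10} to write $\Theta(y,x) = u(y)^*[\idd - \delta(y)^*\delta(x)]u(x)$ with $u \in \calv_{\l(\h,\ltwo_N^{(J)})}$, which is precisely your stacked $u = \oplus_{i=1}^N v_i$ with $\M \cong \c^N$. The only difference is that you spell out the bookkeeping (stacking and the verification that the direct sum stays $\s$-invariant partial nc) that the paper leaves implicit.
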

\begin{proof}
Let $(E,\s)$ be a well organized pair with $E\subseteq G_\delta$. By Proposition \ref{prop5.40}, $\Theta(y,x) \in \cvh$. Hence, by the definition of $\cvh$ and Lemma~\ref{lem5.10},
\[
\Theta(y,x) \=
u(y)^*[ \idd - \d (y)^*\d (x) ]u(x),
\]
where $u \in \calv_{\l(\h,\ltwo_N^{(J)})}$.
\end{proof}

\subsection{Partial nc-Realizations}
\begin{defin}\label{def5.20}
Let $(E,\s)$ be a well organized pair of size $n$ and let $\O$ be a graded $\l(\k_1,\k_2)$-valued function defined on $E$.
  A \emph{$\delta$-realization for $\O$} is a pair $(\delta, \mathcal{J})$, where
$\mathcal{J}$ is a finite sequence of operators
\[
\mathcal{J} = \langle J_m \rangle_{m=1}^n=\langle \begin{bmatrix}A_m&B_m\\C_m&D_m\end{bmatrix} \rangle_{m=1}^n
\]
 with $J_m$ acting isometrically from 
$\c^m \otimes \k_1 \oplus (\c^m \otimes \ltwoj)$
to
 $\c^m\otimes \k_2 \oplus (\c^m \otimes \ltwoi)$ for each $m\le n$,
 and such that
\be
\label{eqfw2}
\O(x) = A_m + B_m \delta(x)(\idd -D_m \delta(x))^{-1}C_m
\ee
for each $m\le n$ and $x \in E \cap \m_m^d$. If in addition,
\be
\label{eqfw3}
v(x) \ :=  \ (\idd -D_m \delta(x))^{-1}C_m
\ee
is an $\ltwoi$-valued partial nc-function on $E$ (resp. $(\s \cap \mathcal{I}_m)$-invariant for each $m\le n$), we say that $(\delta,\mathcal{J})$ is \emph{partial nc} (resp. \emph{$\s$-invariant}).
\end{defin}

\begin{thm}\label{thm5.10}
Let $(E,\s)$ be a well organized pair, and let $\Psi \in \calv_{\l(\h,\k_1)}$ and
$\Phi$ be in $ \calv_{\l(\h,\k_2)}$. If there exists a function $\O$
in the closed unit ball of $ \calv_{\l(\k_1,\k_2)}$ that has an $\s$-invariant partial nc $\delta$-realization 
and satisfies $\O \Psi = \Phi$,
then  $[\Psi^*(y) \Psi(x) - \Phi(y)^* \Phi(x)]$ has an $\s$-invariant partial nc-model.
The converse holds if  $\Psi$ is bounded below on $E$.
If $\Psi$ is not bounded below on $E$, then there exists a function $\O$
in the closed unit ball of $ \calv_{\l(\k_1,\k_2)}$ that has a $\delta$-realization 
and satisfies $\O \Psi = \Phi$,
\end{thm}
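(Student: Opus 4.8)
The plan is to prove the three assertions of Theorem~\ref{thm5.10} in order, linking the model side and the realization side through the familiar ``lurking isometry'' construction adapted to the partial nc / well-organized setting.

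\smallskip

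\emph{Step 1: From a realization to a model.} Suppose $\O$ lies in the closed unit ball of $\calv_{\l(\k_1,\k_2)}$, has an $\s$-invariant partial nc $\delta$-realization $\mathcal{J} = \langle J_m\rangle$ with $J_m = \begin{bmatrix}A_m&B_m\\ C_m&D_m\end{bmatrix}$ isometric, and satisfies $\O\Psi = \Phi$. Writing $v(x) = (\idd - D_m\delta(x))^{-1}C_m$ for $x\in E\cap\m_m^d$ — which is by hypothesis an $\s$-invariant partial nc-function — the isometry relation $J_m^* J_m = \idd$, applied to the vector $(\xi, \delta(x)v(x)\xi)$ for $\xi\in\c^m\otimes\k_1$, yields after the standard manipulation the identity
\[
\idd_{\k_1} - \O(y)^*\O(x) \= u_0(y)^*[\idd - \delta(y)^*\delta(x)]u_0(x), \qquad x,y\in E\cap\m_m^d,
\]
where $u_0(x) = \delta(x)v(x)$ is a graded $\l(\k_1,\ltwoi)$-valued partial nc-function that is $\s$-invariant (being built from $\delta$, which is a matrix of free polynomials hence automatically nc, and the $\s$-invariant partial nc-function $v$). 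Now define $u(x) = u_0(x)\Psi(x)$; since $\Psi\in\calv_{\l(\h,\k_1)}$, this is again $\s$-invariant and partial nc, and multiplying the displayed identity on the left by $\Psi(y)^*$ and the right by $\Psi(x)$, and using $\O\Psi = \Phi$ on both sides, gives
\[
\Psi(y)^*\Psi(x) - \Phi(y)^*\Phi(x) \= u(y)^*[\idd - \delta(y)^*\delta(x)]u(x)
\]
on $E^{[2]}$. Thus $[\Psi(y)^*\Psi(x)-\Phi(y)^*\Phi(x)]$ has an $\s$-invariant partial nc-model. (Finite-dimensionality of $\M$ is not needed here but follows since everything is built from finite-rank pieces over the finite set $E$.)

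\smallskip

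\emph{Step 2: From a model to a realization, when $\Psi$ is bounded below.} Conversely, assume $\Theta(y,x) := \Psi(y)^*\Psi(x) - \Phi(y)^*\Phi(x)$ has an $\s$-invariant partial nc-model $u\in\calv_{\l(\h,\M\otimes\c^J)}$, say $\Theta(y,x) = u(y)^*[\idd-\delta(y)^*\delta(x)]u(x)$. Rearranging,
\[
\Psi(y)^*\Psi(x) + (\delta(y)u(y))^*(\delta(x)u(x)) \= \Phi(y)^*\Phi(x) + u(y)^*u(x).
\]
This equality of positive kernels on $E^{[2]}$ means the map sending the ``column'' $(\Psi(x)\xi,\ \delta(x)u(x)\xi)$ to $(\Phi(x)\xi,\ u(x)\xi)$ is a well-defined isometry $V$ from the span of the left-hand vectors onto the span of the right-hand ones, intertwining the relevant module actions. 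Because $\Psi$ is bounded below on $E$, the first coordinate already spans a copy of $\bigoplus_m (\c^m\otimes\h)$ faithfully, so $V$ extends (padding with the auxiliary $\ltwoj$-summand; $E$ finite makes all spaces finite-dimensional, so an isometric extension with matching defect spaces exists) to an isometry $J_m$ of the form $\begin{bmatrix}A_m&B_m\\ C_m&D_m\end{bmatrix}$ from $\c^m\otimes\k_1\oplus(\c^m\otimes\ltwoj)$ to $\c^m\otimes\k_2\oplus(\c^m\otimes\ltwoi)$. Reading off the block equations from $J_m(\xi, \delta(x)w(x)\xi) = (\O(x)\xi, w(x)\xi)$ with $w(x) = u(x)\Psi(x)^{-1}$ (using boundedness below to invert), one gets $\O(x) = A_m + B_m\delta(x)(\idd - D_m\delta(x))^{-1}C_m$ and $v(x) := (\idd - D_m\delta(x))^{-1}C_m = w(x)$, which inherits $\s$-invariance and the partial nc property from $u$ and $\Psi$. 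One checks $\O\Psi = \Phi$ directly, and $\O$ lies in the closed unit ball since $\|\delta\|<1$ on $E$ (by \eqref{eqgx1}) makes the Neumann series converge and a contractive $J_m$ produces a contractive transfer function. Making the construction compatible across different $m$ (respecting the direct-sum structure within $E$) is arranged by doing the extension first on the ``base'' $\calb$ and propagating via Proposition~\ref{prop4.10}, exactly as models were propagated.

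\smallskip

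\emph{Step 3: The case $\Psi$ not bounded below.} Here the intertwining map $V$ of Step 2 need not have dense enough domain in the first coordinate to pin down $\O$ as an honest function of $x$, so we perturb: replace $\Psi$ by $\Psi_\vare(x) = \begin{bmatrix}\Psi(x)\\ \vare\,\idd_\h\end{bmatrix}$ (valued in $\k_1\oplus\h$), which \emph{is} bounded below on $E$ and still satisfies $\Psi_\vare(y)^*\Psi_\vare(x) - \Phi(y)^*\Phi(x) \ge 0$ with a model obtained by padding $u$; apply Step 2 to get $\O_\vare$ in the closed unit ball of $\calv_{\l(\k_1\oplus\h,\k_2)}$ with an $\s$-invariant partial nc $\delta$-realization and $\O_\vare\Psi_\vare = \Phi$. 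Since $E$ is finite, the closed unit ball of $\calv_{\l(\k_1\oplus\h,\k_2)}$ (realizable part) is compact in the obvious finite-dimensional topology; extract a limit $\O_0$ as $\vare\to0$, restrict its first block to get $\O\in\calv_{\l(\k_1,\k_2)}$ of norm $\le 1$ with $\O\Psi = \Phi$. The limit $\O$ still has a $\delta$-realization (the realizations $\mathcal{J}_\vare$ live in a compact set of tuples of contractions by \eqref{eqgx1}, so a subsequential limit $\mathcal{J}$ gives a realization of $\O$), though one can no longer assert it is \emph{partial nc}, matching the weaker conclusion in the statement.

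\smallskip

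\emph{Main obstacle.} The delicate point is Step 2: producing the isometries $J_m$ simultaneously for all $m\le n$ in a way that is genuinely $\s$-invariant and partial nc — i.e.\ compatible with the direct-sum decompositions inside $E$ and with the similarities in $\s$ — rather than just choosing an unrelated isometric extension at each level. The resolution is to perform the extension only over the base $\calb$ (where, by \eqref{4.70}, the $\s$-action is controlled), exploit the isomorphism $\calv\stackrel{\rho}{\sim}\grade(\calb)$ and the model equation restricted to $\calb$, and then let the partial-nc and $\s$-invariance axioms force the values of $J_m$ (equivalently of $v$) on the reducible and generic-but-not-base elements; the well-organized structure is precisely what makes this propagation consistent and single-valued, just as in the proof that $\rho$ in Proposition~\ref{prop4.10} is an isomorphism.
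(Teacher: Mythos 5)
Your Steps 1 and 2 are, in substance, the paper's own proof: both implications are the lurking-isometry computation, the two sides are linked by $u = v\Psi$, and bounded-below-ness of $\Psi$ enters exactly where the paper uses it, namely to transfer the partial nc and $\s$-invariance properties from $u$ back to $v = u\Psi^{-1}$. One slip in Step 1: the identity produced by $J_m^*J_m = \idd$ applied to $(\xi,\delta(x)v(x)\xi)\mapsto(\O(x)\xi,v(x)\xi)$ is
\[
\idd - \O(y)^*\O(x) \= v(y)^*[\idd - \delta(y)^*\delta(x)]v(x),
\]
so the model function $u_0$ is $v$ itself, not $\delta(x)v(x)$; with $u_0=\delta v$ the displayed identity is false. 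Since you then set $u=u_0\Psi$, the correct choice $u_0=v$ recovers the paper's $u=v\Psi$ and the rest of your Step 1 goes through unchanged, so this is cosmetic rather than structural.

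Step 3 is where you genuinely depart from the paper. The paper does nothing new there: it runs the same lurking isometry as in Step 2 without the bounded-below hypothesis, notes that \eqref{eqfw5} and \eqref{eqfw7} still hold, and simply forgoes the claim that $v$ is partial nc --- that is the entire content of the weaker third conclusion. Your perturbation $\Psi_\vare = \begin{bmatrix}\Psi\\ \vare\,\idd_\h\end{bmatrix}$ followed by a limit as $\vare\to 0$ is a legitimate alternative, and it actually buys something the paper's one-liner glosses over: each $\O_\vare$ is $\s$-invariant partial nc, these are closed conditions on the finite set $E$, so the limiting $\O$ really does lie in $\calv_{\l(\k_1,\k_2)}$ as the statement demands. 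However, your compactness claim for the realizations themselves is too quick: the $J_m$ are isometries between infinite-dimensional spaces (because of the $\ltwo$ padding), and a weak limit of isometries need not be an isometry. The clean repair is to pass to the limit in the models rather than in the realizations: from \eqref{eqfw7} and \eqref{eqgx1} one gets $\|v_\vare(x)\|\le (1-\rho^2)^{-1/2}$ uniformly, and by Lemma~\ref{lem5.10} the $v_\vare$ can be taken to have ranges in a fixed finite-dimensional space, so a subsequence converges to a model for $\idd-\O(y)^*\O(x)$; one then reruns the lurking isometry once more to produce the $\delta$-realization of the limit $\O$.
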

\begin{proof}  
Suppose $[\Psi^*(y) \Psi(x) - \Phi(y)^* \Phi(x)]$ has an $\s$-invariant partial nc-model, so
there exists  $u \in \calv_{\l(\h,\ltwo_N^{(J)})}$ satisfying
\be
\label{eqfw1}
\Psi^*(y) \Psi(x) - \Phi(y)^* \Phi(x) \=
u(y)^*[ \idd - \d (y)^*\d (x) ]u(x).
\ee
We can rewrite \eqref{eqfw1} to say that
for each $1\leq m \leq n$, the map
\be
\label{eqfw4}
J_m 
\=\begin{bmatrix}A_m&B_m\\C_m&D_m\end{bmatrix}
\, : \,
\begm
\Psi(x) \\
\d(x) u(x)
\endm
\ \mapsto \
\begm
\Phi(x) \\
u(x)
\endm
\ee
is an isometry from the span in $(\k_1 \oplus \ltwo_N^{(J)})\otimes \c^m$
of
\[
\left\{
\ran
\begm
\Psi(x) \\
\d(x) u(x)
\endm \ : \
x \in E \cap \m^d_m
\right\}
\]
to the span in $(\k_2 \oplus \ltwo_N^{(J)})\otimes \c^m$
of
\[
\left\{
\ran
\begm
\Phi(x) \\
 u(x)
\endm \ : \
x \in E \cap \m^d_m
\right\} .
\]
Replacing $\ltwo_N$ by $\ltwo$ if necessary, we can extend $J_m$ to an isometry from all
of $(\k_1 \oplus \ltwo^{(J)})\otimes \c^m$ to $(\k_2 \oplus \ltwo^{(J)})\otimes \c^m$.

Define $\Omega$ and $v$ on $E \cap \m^d_m$ by \eqref{eqfw2}
and \eqref{eqfw3} respectively. Then \eqref{eqfw4} and the fact that $J_m$ is an isometry yield
\begin{eqnarray}
\label{eqfw5}
\O(x) \Psi(x) &\=& \Phi(x) \quad  \forall x \in E \\
\label{eqfw6}
u(x) &\=& v(x) \Psi(x) \quad  \forall x \in E \\
\label{eqfw7}
\idd - \O(y)^* \O(x) &=&
v(y)^* [ \idd - \d(y)^* \d(x) ] v(x) \quad \forall (x,y) \in E^{[2]} .
\end{eqnarray}
Since $u$ is partial nc  on $E$, and $\Psi$ is nc, it would follow from \eqref{eqfw6} that $v$ is also
partial nc on $E$
if $\Psi(x) $ were bounded below.


%
%
%
%
Conversely, suppose $\O$ existed as in the statement of the theorem.
Then \eqref{eqfw5} and \eqref{eqfw7} would hold, and defining $u(x) := v(x) \Psi(x)$ 
gives \eqref{eqfw1}.
\end{proof}

\begin{remark}
\label{remfw8}
If $\Psi$ is not bounded below, but each $C_m$  and $D_m$ in \eqref{eqfw4} satisfy
$C_m = \idd_{\c^m} \otimes C_1$ and
$D_m = \idd_{\c^m} \otimes D_1$,
 then the converse still holds.
Indeed, follow the above proof through \eqref{eqfw7}. Then define a new $v$ by leaving
$v(x)$ unchanged on $\calbm$, and extending it by Proposition~\ref{prop4.10} to be $\s$-invariant partial nc on $E$. 
Define $\O(x) := A_m + B_m \delta(x) v(x)$.
Since $\Psi$ is nc, \eqref{eqfw6} will still hold, and so will \eqref{eqfw5} and 
\eqref{eqfw7}.
To check \eqref{eqfw3}, we wish to know whether
\[
\idd_{\c^m} \otimes C_1 \= 
(\idd - \idd_{\c^m} \otimes D_1 \delta(x) )^{-1} v(x) .
\]
Both sides are equal on $\calbm$, and both sides are $\s$-invariant partial nc on $E$,
therefore they agree on all of $E$. In Theorem~\ref{thm7.10}, we show that
$C_m$ and $D_m$ can be chosen with this special form.
\end{remark}


\section{Full Models and Realizations}
\label{secfm}


 Fix again a matrix $\delta$ of nc-polynomials, and assume that $\delta$ is $J$-by-$J$ and that $\gdel$ is non-empty. Let $\h,\k_1,\k_2$, and $\M$ be Hilbert spaces, with $\h, \k_1$ and $\k_2$ finite dimensional.
For the rest of this section, define
\be
\label{eqhw1}
 \Theta(y,x) \=
\Psi(y)^* \Psi(x) - \Phi(y)^* \Phi(x),
\ee
where
\be
\label{eqhw1.5}
 \Psi\in H^\i_{\l(\h,\k_1)}(\gdel) ,\quad \Phi \ \in  H^\i_{\l(\h,\k_2)}(\gdel),
\ee and 
\be
\label{eqhw2}
 \Theta(x,x) \ \geq \ 0, \qquad \forall x \in \gdel .
\ee
We want to conclude that there exists a function $\Omega$ in the ball of
$H^\i_{\l(\k_1,\k_2)}$
such that
\be
\label{eqhw3}
\Omega(x) \Psi(x) \= \Phi(x), \qquad \forall x \in \gdel .
\ee

\begin{defin}\label{def6.10}
Let $h(y,x)$ be an $\l(\h)$-valued graded function on $\gdel^{[2]}$.
 A \emph{$\delta$-model for $h$}  is a graded $\l(\h,\M \otimes \c^J)$-valued function $u$ on $\gdel$, such that
\be\label{6.10}
h(y,x)  \= u(y)^*[\idd-\delta(y)^*\delta(x)]u(x)
\ee
for all $x,y \in \gdel$. We say the model is \emph{nc (resp. locally bounded, holomorphic)} if $u$ is nc (resp. locally bounded, holomorphic).
\end{defin}
\begin{defin}\label{def6.20}
Let $\Omega$ be a graded $\l(\k_1,\k_2)$ valued function on $\gdel$.
  A \emph{$\delta$-realization for $\O$} is a pair
$(\delta, \mathcal{J} )$, where $\mathcal{J} $ is a 
 sequence of operators
\[
\mathcal{J} = \langle J_m \rangle_{m=1}^\infty=\langle \begin{bmatrix}A_m&B_m\\C_m&D_m\end{bmatrix} \rangle_{m=1}^\infty
\]
 such that $J_m$ acts isometrically from  $\c^{m} \otimes \k_1 \oplus (\c^m \otimes \ltwoj)$
to  $\c^{m}\otimes \k_2 \oplus (\c^m \otimes \ltwoj)$ for each $m$, and such that
\[
\O_m(x) \ :=\ A_m + B_m \delta(x) (\idd -D_m \delta(x))^{-1}C_m.
\]
 If, in addition,
\[
v(x) = (\idd -D_m \delta(x))^{-1}C_m
\]
is an nc-function on $\gdel$,  we say that $(\delta,\mathcal{J})$ is an \emph{ nc-realization}. 

If, for each $m$, we have
\be
\label{eqgz1}
\begin{bmatrix}A_m&B_m\\C_m&D_m\end{bmatrix}
\=
\begin{bmatrix}\idd_{\c^m} \ot A_1  & \idd_{\c^m} \ot B_1\\
\idd_{\c^m} \ot C_1&\idd_{\c^m} \ot D_1\end{bmatrix},
\ee
we say that $(\delta,\mathcal{J})$ is a \emph{ free realization}. 
\end{defin}

Note that a free realization is automatically an nc-realization.
The following proposition follows by the same lurking isometry argument that proved Theorem \ref{thm5.10}.
\begin{prop}\label{prop6.10}
Let $\O$ be a graded $\l(\k_1,\k_2)$ valued function on $\gdel$. 
Then $\O$  has a $\delta$-realization  if and only if $[\idd - \O(y)^* \O(x)]$ has a 
$\delta$-model; and $\O$ has a $\delta$ nc-realization if and only if $[\idd - \O(y)^* \O(x)]$ has a
$\delta$ nc-model. If $\O$ has a $\delta$-realization, then automatically, the model is both locally bounded and holomorphic.
\end{prop}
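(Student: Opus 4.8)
The plan is to run the lurking-isometry argument used for Theorem~\ref{thm5.10}, now with the finite partial nc-set $E$ replaced by $\gdel$ and with the ``$\Psi$'' appearing there taken to be the identity; because there is no $\Psi$ to divide out, none of the boundedness subtleties of Theorem~\ref{thm5.10} intervene and the three equivalences come out cleanly.

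First I would prove ``$\Omega$ has a $\delta$-realization $\Rightarrow$ $[\idd-\Omega(y)^*\Omega(x)]$ has a $\delta$-model.'' Given a realization $\mathcal{J}=\langle J_m\rangle$ with $J_m=\begin{bmatrix}A_m&B_m\\C_m&D_m\end{bmatrix}$ isometric, set, for $x\in\gdel\cap\mm^d$,
\[
v(x)\ :=\ (\idd-D_m\delta(x))^{-1}C_m .
\]
Since $\|D_m\delta(x)\|\le\|\delta(x)\|<1$, the Neumann series converges and $v(x)=\sum_{k\ge0}(D_m\delta(x))^kC_m$ is a power series in the entries of $x$, uniformly convergent on each set $\{x\in\gdel : \|\delta(x)\|\le\rho\}$ with $\rho<1$; hence $v$ is holomorphic and locally bounded on $\gdel$, which is statement~(3). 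A short computation gives $J_m\begin{bmatrix}\idd\\\delta(x)v(x)\end{bmatrix}=\begin{bmatrix}\Omega(x)\\v(x)\end{bmatrix}$ (the top row is the realization formula, the bottom row is the defining relation $(\idd-D_m\delta(x))v(x)=C_m$), and then $J_m^*J_m=\idd$ yields
\[
\begin{bmatrix}\Omega(y)\\v(y)\end{bmatrix}^*\begin{bmatrix}\Omega(x)\\v(x)\end{bmatrix}\=\begin{bmatrix}\idd\\\delta(y)v(y)\end{bmatrix}^*\begin{bmatrix}\idd\\\delta(x)v(x)\end{bmatrix},
\]
which rearranges to $\idd-\Omega(y)^*\Omega(x)=v(y)^*[\idd-\delta(y)^*\delta(x)]v(x)$; so $u:=v$ is a $\delta$-model (with $\M=\ltwo$), and it is an nc-model exactly when $v$ is an nc-function, i.e.\ exactly when $(\delta,\mathcal{J})$ was an nc-realization.

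For the converse I would begin from a $\delta$-model $u$ for $[\idd-\Omega(y)^*\Omega(x)]$; we may assume $\M=\ltwo$ after embedding $\M$ into $\ltwo$ and adjoining further copies of $\ltwo$. Rewriting the model identity as $\Omega(y)^*\Omega(x)+u(y)^*u(x)=\idd+u(y)^*\delta(y)^*\delta(x)u(x)$ shows that for each $m$ the assignment
\[
\begin{bmatrix}\idd\\\delta(x)u(x)\end{bmatrix}h\ \longmapsto\ \begin{bmatrix}\Omega(x)\\u(x)\end{bmatrix}h,\qquad x\in\gdel\cap\mm^d,\ h\in\c^m\otimes\k_1,
\]
extends to a well-defined isometry between the closed linear spans of the two families of ranges. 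Having arranged (via the extra copies of $\ltwo$) that both orthogonal-complement defect spaces are infinite-dimensional, I would extend this partial isometry to an isometry $J_m$ of all of $\c^m\otimes\k_1\oplus(\c^m\otimes\ltwoj)$ into $\c^m\otimes\k_2\oplus(\c^m\otimes\ltwoj)$; writing $J_m=\begin{bmatrix}A_m&B_m\\C_m&D_m\end{bmatrix}$, the two components of $J_m\begin{bmatrix}\idd\\\delta(x)u(x)\end{bmatrix}=\begin{bmatrix}\Omega(x)\\u(x)\end{bmatrix}$ read $(\idd-D_m\delta(x))u(x)=C_m$ and $A_m+B_m\delta(x)u(x)=\Omega(x)$. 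Since $\idd-D_m\delta(x)$ is invertible, the first forces $u(x)=(\idd-D_m\delta(x))^{-1}C_m$, and substituting into the second gives the realization formula for $\Omega$. Thus $(\delta,\langle J_m\rangle)$ realizes $\Omega$; and since the function $v=(\idd-D_m\delta(\cdot))^{-1}C_m$ attached to this realization is exactly $u$, the realization is an nc-realization precisely when the model $u$ is an nc-model.

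The only genuinely delicate point I anticipate is the extension of the partial isometry to the full ambient spaces: one must ensure the two defect spaces have (at least) the same, infinite, dimension, which is exactly why $\M$ is permitted to absorb extra copies of $\ltwo$ (the ``replace $\ltwo_N$ by $\ltwo$'' device from the proof of Theorem~\ref{thm5.10}). Everything else --- convergence of the Neumann series, invertibility of $\idd-D_m\delta(x)$, holomorphy and local boundedness of $v$, and the purely algebraic lurking-isometry computation --- is routine and parallels the bookkeeping already carried out for Theorem~\ref{thm5.10}.
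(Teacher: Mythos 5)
Your proposal is correct and is exactly the argument the paper intends: the paper gives no written proof of Proposition~\ref{prop6.10} beyond the remark that it ``follows by the same lurking isometry argument that proved Theorem~\ref{thm5.10},'' and your writeup is a correct instantiation of that argument with $\Psi = \idd$ (so no bounded-below issue arises), including the Neumann-series verification of local boundedness and holomorphy and the defect-space enlargement needed to extend the lurking isometry.
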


\begin{thm}\label{thm6.10}
Let $\Theta$ be as in \eqref{eqhw1} and satisfy 
\eqref{eqhw2}. 
Then  $\Theta$
has a  $\delta$ nc-model.
\end{thm}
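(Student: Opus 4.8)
The plan is to build the global model $u$ on $\gdel$ as a normal-families limit of the finite-dimensional partial nc-models produced by Proposition~\ref{prop5.50}, taken over an exhaustion of $\gdel$ by well organized pairs. First I would fix a countable set $Q\subseteq\gdel$ that is dense in each $\gdel\cap\m_n^d$, is closed under direct sums, and contains every block matrix $\left[\begin{smallmatrix} N & t(N-M)\\ 0 & M\end{smallmatrix}\right]$ (with $N,M\in Q$ of equal size and $t$ rational) that happens to lie in $\gdel$; and fix a countable set $\s_\infty$ of invertible matrices containing all the scalar shears $\left[\begin{smallmatrix} I & tI\\ 0 & I\end{smallmatrix}\right]$. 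Using the constructions of Sections~\ref{secpncs}--\ref{secwo} I would then realize $Q$ and $\s_\infty$ as the increasing union of well organized pairs $(E_k,\s_k)$ with $E_k\subseteq\gdel$, $E_k\subseteq E_{k+1}$, $\s_k\subseteq\s_{k+1}$. Because $\gdel$ is \emph{not} closed under similarity, the conjugates that witness well organization must be kept inside $\gdel$, so this set-up needs care; I expect this to be the principal obstacle, everything afterward being a compactness argument followed by continuation-by-continuity.

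By \eqref{eqhw2} and Proposition~\ref{prop5.50}, each $(E_k,\s_k)$ carries an $\s_k$-invariant finite-dimensional partial nc-model $u_k\in\calv_{\l(\h,\ltwo_{N_k}^{(J)})}$ with $\Theta|_{E_k^{[2]}}(y,x)=u_k(y)^*[\idd-\delta(y)^*\delta(x)]u_k(x)$; after embedding $\ltwo_{N_k}\hookrightarrow\ltwo$ all $u_k$ take values in $\ltwo^{(J)}$. Evaluating the model equation on the diagonal and using $\idd-\delta(x)^*\delta(x)\geq(1-\|\delta(x)\|^2)\idd$ on $\gdel$ gives
\[
u_k(x)^*u_k(x)\ \leq\ \frac{\Theta(x,x)}{1-\|\delta(x)\|^2}\ \leq\ \frac{\|\Psi(x)\|^2}{1-\|\delta(x)\|^2},
\]
a bound independent of $k$ that is uniform on compact subsets of $\gdel$, since $\Psi$ is bounded and $\|\delta\|<1$ stays away from $1$ there. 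A diagonal extraction over $Q$, testing against a countable total set of vectors exactly as in the proof of Proposition~\ref{prop3.30}, then yields a subsequence along which $u_k(q)$ converges weakly for every $q\in Q$; call the limit $u_0$. Passing to the limit in the model identity and in the direct-sum and $\s_k$-invariance relations, $u_0$ is an $\s_\infty$-invariant graded $\l(\h,\ltwo^{(J)})$-valued function on $Q$ that respects direct sums, satisfies the model equation on $Q^{[2]}$, and obeys the same a priori bound.

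Finally I would extend $u_0$ from $Q$ to all of $\gdel$. Since $Q$ contains the relevant block matrices and $\s_\infty$ the corresponding shears, $u_0$ satisfies the block identity of Lemma~\ref{lem2.20} on those blocks (by the same computation, using $\s_\infty$-invariance and additivity over direct sums); feeding this into the argument used to prove continuity in Theorem~\ref{thm3.10} shows that $u_0$ is continuous on $Q$, with local modulus of continuity controlled by the bound above. As $Q$ is dense in each $\gdel\cap\m_n^d$, $u_0$ extends uniquely to a continuous graded $\l(\h,\ltwo^{(J)})$-valued function $u$ on $\gdel$, and continuity of both sides carries the model equation, the direct-sum property, and the similarity-invariance over from $Q$ to all of $\gdel$. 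Thus $u\in\nc_{\l(\h,\ltwo^{(J)})}(\gdel)$ is locally bounded, hence holomorphic by Theorem~\ref{thm3.10}, and $\Theta(y,x)=u(y)^*[\idd-\delta(y)^*\delta(x)]u(x)$ on $\gdel$; that is, $u$ is the desired $\delta$ nc-model for $\Theta$.
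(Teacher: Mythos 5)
Your overall strategy (finite well organized approximations, Proposition~\ref{prop5.50}, a diagonal compactness argument over a countable dense set, then continuation) is the right general shape, but two essential ingredients of the paper's argument are missing, and each is a genuine gap rather than a technicality.

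First, the passage to the limit in the model identity does not survive mere weak convergence. You extract $u_k(q)\to u_0(q)$ weakly, but the identity $\Theta(y,x)=u_k(y)^*[\idd-\delta(y)^*\delta(x)]u_k(x)$ involves the products $u_k(y)^*u_k(x)$, and products of weakly convergent, norm-bounded sequences need not converge to the product of the limits (the models $u_k$ take values in $\ltwo_{N_k}^{(J)}$ with $N_k$ growing, so the ranges can escape to infinity and the weak limit can even be $0$ while $u_k^*u_k$ stays bounded below). This is exactly what Lemma~\ref{lem6.40} and Lemma~\ref{lem6.50} are for: one first rotates each partial model by a unitary so that, on each fixed finite piece $E_t$, the ranges of all the $u_\tau(x)$ lie in one fixed finite-dimensional subspace $\ltwo^{(J)}_{N_t}$ independent of $\tau$; then weak convergence is norm convergence there and the identity passes to the limit. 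Your a priori bound $u_k(x)^*u_k(x)\le\Theta(x,x)/(1-\|\delta(x)\|^2)$ is correct but does not substitute for this.

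Second, your extension from $Q$ to $\gdel$ is built on an inconsistent set-up. To run the Lemma~\ref{lem2.20}/Theorem~\ref{thm3.10} continuity argument you put the shears $\left[\begin{smallmatrix}I&tI\\0&I\end{smallmatrix}\right]$ into $\s_\infty$ and the sheared direct sums into $Q$; but axiom \eqref{4.70} of a well organized pair forces any $M\in E\cap\m_m^d$ with $S^{-1}MS\in E$ for some $S\in\s$ to lie in $\calbm\cup\Sigma^d_m$, whereas $N\oplus M$ is $E$-reducible, hence not in $\calbm\subseteq\calgm$, and generically not scalar. So the pairs $(E_k,\s_k)$ you need cannot be well organized, and Proposition~\ref{prop5.50} is not available on them. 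The paper sidesteps this entirely: instead of taking limits of the partial models themselves, it converts each partial model into a global realization via the lurking isometry (Theorem~\ref{thm5.10}), producing holomorphic functions $\O_\tau$, $V_\tau$, $U_\tau$ defined on \emph{all} of $\gdel$ that agree with $u_\tau$ on $E_\tau$; Montel's theorem (Proposition~\ref{prop3.30}) then gives a holomorphic limit on all of $\gdel$, and holomorphy (not an ad hoc continuity argument on a countable set) carries the model identity and the nc relations from the dense set to $\gdel$. The paper also needs two nested limits (first $\tau\to\infty$ at fixed level $n$, producing models that are ``nc to order $n$'', then $n\to\infty$), which your single increasing union conflates.
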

%

The remainder of the section will be devoted to the proof of Theorem \ref{thm6.10}.
This theorem is strengthened in Theorem~\ref{thm7.10}, where it is shown that one can
choose $\Omega$ satsifying \eqref{eqhw3} so that it has a free realization.

When $d =1$, Theorem \ref{thm6.10}  is well-known; see {\em e.g.} 
\cite{ampi} for a treatment in the case of the unit disk. In one variable generalizing to $\gdel$ presents few difficulties.
 
When $d > 1$, in the commutative case, the theorem was first proved by Ambrozie and Timotin in the scalar case in \cite{at03}; it was 
extended to the operator valued case by Ball and Bolotnikov in \cite{babo04}.   See also \cite{amy12b} for an alternative treatment.
In the non-commutative case, Ball, Groenewald and Malakorn \cite{bgm06} proved this theorem for
$\gdel$'s that come from certain bipartite graphs; this includes the most important examples,
the  non-commutative ball
and the non-commutative polydisk.

We shall assume for the rest of this section that 
$d \geq 2$, as the $d=1$ case can be immediately deduced from the $d=2$ case.

\subsection{Step 1}
In this subsection, for each fixed $n \geq 2$, we shall construct a sequence $(E_\tau,\s_\tau)$ of well-organized partial nc-sets  of size $n$. This will give rise in the next subsection, after taking a cluster point of the sequence, in an holomorphic realization of $\O$ on $\gdel$ that is `nc up to order $n$'.

Fix $n$. Many of the objects in this step of the proof (and steps 2 and 3 as well) will depend on $n$, though our notation will not reflect this fact. For $M\in \mm^d$ we define
\[
\com(M) = \set{A\in \mm}{A\ M^r =M^rA, \text{ for } r=1,\ldots,d}.
\]
\begin{lem}\label{lem6.10}
For each $m=1,\ldots,n$, there exists a sequence, $\langle B_{m,t}\rangle_{t=1}^\infty$ in $\gdel \cap \m_m^d$ such that
\be\label{6.20}
\forall_{t_1,t_2}\  t_1 \not= t_2 \implies B_{m,t_1} \not= B_{m,t_2},
\ee
\be\label{6.30}
\forall_t\  B_{m,t} \text{ is generic},
\ee
\be\label{6.40}
\forall_t\  \com(B_{m,t}) = \c\ \idcm,\qquad \text{and}
\ee
\be\label{6.x50}
\set{B_{m,t}}{t \in \n} \text { is dense in }\gdel \cap \m_m^d.
\ee
\end{lem}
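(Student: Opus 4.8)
The plan is to deduce the lemma from one fact --- that tuples with trivial commutant are dense in $G_\delta\cap\m_m^d$ --- together with the observation that $\eqref{6.40}$ forces $\eqref{6.30}$. I would first prove this implication: suppose $\com(B)=\c\,\idcm$ but $B=S^{-1}(M_1\oplus M_2)S$ with $S\in\invm$ and $M_i\in\m_{m_i}^d$, $m_1,m_2\ge 1$. Let $Q$ be the orthogonal projection of $\c^{m_1}\oplus\c^{m_2}=\c^m$ onto $\c^{m_1}$; then $Q$ commutes with every $M_1^r\oplus M_2^r$, so $P:=S^{-1}QS$ lies in $\com(B)$, yet $P$ is an idempotent of rank $m_1\in\{1,\dots,m-1\}$, hence $P\ne 0,\idcm$ --- a contradiction. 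So every tuple with trivial commutant is generic, and it suffices to produce pairwise distinct $B_{m,t}\in G_\delta\cap\m_m^d$, dense there, each with trivial commutant.

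Next I would show that $\mathcal{U}_m:=\{M\in\m_m^d:\com(M)=\c\,\idcm\}$ is open and dense in $\m_m^d$. Write $\Phi_M:\m_m\to\m_m^d$ for the linear map $\Phi_M(A)=([A,M^1],\dots,[A,M^d])$, so that $\com(M)=\ker\Phi_M$ and $\idcm\in\ker\Phi_M$ always; hence $M\in\mathcal{U}_m$ iff $\operatorname{rank}\Phi_M=m^2-1$, i.e. iff some $(m^2-1)\times(m^2-1)$ minor of the matrix of $\Phi_M$ is nonzero. These minors are polynomials in the entries of $M$, so the complement $Z_m=\m_m^d\setminus\mathcal{U}_m$ is their common zero set, a Zariski-closed subset of $\m_m^d\cong\c^{dm^2}$. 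It is proper: since $d\ge 2$, the tuple $C_0=(\Lambda,S,0,\dots,0)\in\m_m^d$, with $\Lambda=\operatorname{diag}(1,2,\dots,m)$ and $S$ the nilpotent cyclic shift, has trivial commutant (a matrix commuting with $\Lambda$ is diagonal, and a diagonal matrix commuting with $S$ is scalar), so $C_0\in\mathcal{U}_m$. Being a proper Zariski-closed set, $Z_m$ lies in the zero set of a single nonzero polynomial, hence is nowhere dense (and of measure zero) in $\m_m^d$; thus $\mathcal{U}_m$ is open and dense. (For $m=1$ this is vacuous, as $Z_1=\emptyset$.)

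Finally I would assemble the sequence. We may assume $G_\delta\cap\m_m^d\ne\emptyset$, as otherwise there is nothing to prove; it is then a nonempty open subset of the separable metric space $\m_m^d$, so fix a dense sequence $\{Q_t\}_{t\ge 1}$ in it. For each $t$ the set $\mathcal{U}_m\cap G_\delta\cap\b{Q_t}{1/t}$ is open and, by density of $\mathcal{U}_m$, nonempty, hence uncountable, so I can choose inductively
\[
B_{m,t}\ \in\ \mathcal{U}_m\cap G_\delta\cap\b{Q_t}{1/t},\qquad B_{m,t}\notin\{B_{m,1},\dots,B_{m,t-1}\},
\]
the exclusion being possible since an uncountable set is not exhausted by finitely many points. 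Then $\eqref{6.20}$ holds by construction; $\eqref{6.40}$ holds because $B_{m,t}\in\mathcal{U}_m$; $\eqref{6.30}$ follows from $\eqref{6.40}$ by the first step; and since every point of $G_\delta\cap\m_m^d$ is arbitrarily close to infinitely many of the $Q_t$, and hence to the corresponding $B_{m,t}$, the set $\{B_{m,t}:t\in\n\}$ is dense in $G_\delta\cap\m_m^d$, giving $\eqref{6.x50}$.

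The only step with genuine content is the density of $\mathcal{U}_m$: everything rests on exhibiting a single tuple with trivial commutant --- which is exactly where the hypothesis $d\ge 2$ enters --- and on the standard fact that a proper Zariski-closed subset of a complex affine space is nowhere dense. The remainder is routine bookkeeping with a countable dense set.
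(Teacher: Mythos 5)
Your proof is correct and takes the same route the paper intends: the paper's entire proof is the one-line remark that \eqref{6.30} and \eqref{6.40} fail only on sets of lower dimension, and you have simply filled in the details (trivial commutant implies generic, the trivial-commutant locus is the complement of a proper Zariski-closed set when $d\ge 2$, then a routine selection of a dense sequence of distinct points). No gaps; the explicit witness $(\Lambda,S,0,\dots,0)$ and the rank argument for $\Phi_M$ are exactly the standard way to make the paper's dimension claim precise.
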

\begin{proof}
 This is easy to verify, because \eqref{6.30} and \eqref{6.40} only fail on sets of lower dimension than
$\m_m^d$.
\end{proof}

Fix sequences $\langle B_{m,t}\rangle_{t=1}^\infty$ satisfying the properties of Lemma \ref{lem6.10}. For each $m=1,\ldots,n$ and $\tau \in \n$, we define
\be\label{6.50}
\calb_{m,\tau} = \set{B_{m,t}}{1 \le t \le \tau}.
\ee
We are going to inductively choose elements $\langle S_{k,m}\rangle_{k=1}^\infty$ in $\invm$,
for $1 \leq m \leq n$.
Once they are chosen,
we define
\[
\s_\tau \= \{ S_{k,m} : \ 1 \leq k \leq \tau,\ 2 \leq m \leq n \} ,
\] and we define
$\calr_{m,\tau}$ to consist of all $R \in \m_m^d$ that have the form,
\[
R = \bigoplus_{i=1}^{|\pi|} M_i
\]
where $\pi$ is a nontrivial partition of $m$ and 
\[
M_i \ \in \ \calb_{\pi(i),\tau} \cup \bigcup_{1 \leq k \leq \tau}  \left(   S_{k,\pi(i)}^{-1}  \calb_{\pi(i),\tau}S_{k,\pi(i)}
\cap \gdel \right), \qquad i=1,\ldots, |\pi|.
\]
Note that with this definition, as $\pi$ is required to be nontrivial, $\calr_{1,\tau} = \emptyset$. 
We define $E_{m,\tau} \subseteq \gdel \cap \mm^d$ by
\be\label{6.130}
E_{m,\tau} =\bigcup_{k=1}^\tau \big(\gdel \cap  S_{k,m}^{-1}  \calb_{m,\tau}S_{k,m} \big)\ \cup\ 
\calb_{m,\tau}\ \cup\ \calr_{m,\tau}, \qquad 2 \leq m \leq n.
\ee
We let $E_{1,\tau} = \calb_{1,\tau}$.
Finally, define $E_\tau$  by
\[
E_\tau \= \bigcup_{m=1}^\tau E_{m,\tau}.
\]
\begin{lem}
\label{lem6.x1}
The set $\s_\tau$ can be chosen so that  for each
$ 2 \leq m \leq n$ the set $\{ S_{k,m} : k \in \n \}$ is dense in $\invm$, and

(i) $S^{-1} \calb_{m,\tau} S \subset \calg_m \ \forall \ S \in \s_\tau \cap \invm$.

(ii) $\forall S_{k_1,m}, S_{k_2,m} \in \s_\tau \cap \invm$, the set $S_{k_1,m}^{-1} S_{k_2,m}^{-1}
\calb_{m,\tau}S_{k_2,m} S_{k_1,m}$ is disjoint from $E_{m,\tau}$.

(iii) $ \forall k_1 \neq k_2$ in $\{ 1,2, \dots,  \tau\}$, the set $S_{k_1,m}^{-1} 
\calb_{m,\tau}S_{k_1,m}$ is disjoint from $S_{k_2,m}^{-1} 
\calb_{m,\tau}S_{k_2,m}$ and from $\calb_{m,\tau}$.

(iv) If $R \in R_{m,\tau}$ and for some $1 \leq k \leq \tau$ we have $S_{k,m}^{-1} R S_{k,m} \in E_\tau$, then $R \in \Sigma^d_m$.
\end{lem}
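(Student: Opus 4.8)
The plan is to construct the similarities $S_{k,m}$ inductively on $k$ (and, within each $k$, on $m$ from $2$ to $n$), throwing away at each stage a set of measure zero — or more precisely, a set whose complement is open and dense in $\invm$ — so that the four displayed conditions are preserved. The density requirement at the end is automatically achievable because each of (i)--(iv) will turn out to exclude only a nowhere-dense set of candidate similarities, and we can enumerate a countable dense subset of $\invm$ and at stage $k$ choose $S_{k,m}$ within a prescribed small ball around the $k$-th point of that enumeration, after deleting the bad nowhere-dense set. Thus $\{S_{k,m} : k \in \n\}$ remains dense. The key point to extract first is: for a \emph{generic} matrix tuple $B$ with $\com(B) = \c\,\idcm$ (which is exactly why Lemma \ref{lem6.10} arranged \eqref{6.40}), the conjugate $S^{-1}BS$ is again generic for every $S$ (conjugation preserves genericity, since $S^{-1}(M_1\oplus M_2)S$ would exhibit a reduction of $B$ itself), giving (i) essentially for free once we also note $S^{-1}\calb_{m,\tau}S \subset \gdel$ is the defining restriction in \eqref{6.130}.

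For (ii) and (iii), the idea is that two conjugates $S_1^{-1}BS_1$ and $S_2^{-1}B'S_2$ of generic tuples with trivial commutant coincide only if $B = B'$ (they are both irreducible representations, hence isomorphic iff unitarily/similarly equivalent with the intertwiner unique up to scalar) and $S_1 S_2^{-1} \in \com(B)\setminus\{0\} = \c^\times\,\idcm$, i.e.\ $S_1$ and $S_2$ differ by a scalar. Since $\calb_{m,\tau}$ is a finite set of distinct generic tuples, for each pair of distinct elements $B \ne B'$ in it the set of $S$ with $S^{-1}BS = B'$ is empty, and for $B = B'$ it is $\c^\times S$ — a one-parameter family. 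So, given previously chosen $S_{k',m}$ with $k' < k$, the set of $S \in \invm$ for which any $S^{-1}\calb_{m,\tau}S$ meets $S_{k',m}^{-1}\calb_{m,\tau}S_{k',m}$ or $\calb_{m,\tau}$ is a finite union of sets each contained in $\c^\times\, S_{k',m}$ (resp.\ $\c^\times\,\idcm$), which is nowhere dense in $\invm$ (here one uses $m \ge 2$ so $\invm$ has dimension $m^2 > 1$); (ii) is handled the same way, replacing $S$ by $S^{-1}S_{k',m}^{-1}$ and arguing the resulting constraint on $S$ is again confined to a lower-dimensional subvariety. A subtlety is that the sets $\calb_{m,\tau}$ grow with $\tau$, so one must be careful that the choices made at earlier stages (for smaller $\tau$) remain valid; but $\s_\tau$ only uses $S_{k,m}$ with $k \le \tau$, and the inductive construction is over all of $\n$ at once — we build one infinite sequence $\langle S_{k,m}\rangle_{k=1}^\infty$ and define $\calb_{m,\tau}, E_{m,\tau}, \s_\tau$ afterward, so at step $k$ the relevant "$\calb$" is $\{B_{m,t} : t \le k\}$ or we simply take the union over all $t$, which is still countable and the bad set still nowhere dense (a countable union of nowhere-dense sets, avoided by Baire / by shrinking the ball at step $k$).

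The main obstacle will be (iv), which is the only genuinely new combinatorial condition: if $R = \bigoplus_i M_i \in \calr_{m,\tau}$ (a nontrivial direct sum of conjugated basic blocks) and $S_{k,m}^{-1}RS_{k,m}$ lands back in $E_\tau$, we must force $R$ to be scalar. The approach: $S_{k,m}^{-1}RS_{k,m}$ being in $E_\tau$ means it is either in some $E_{m',\tau}$; since $R \in \m_m^d$, it must be in $E_{m,\tau}$, hence it is a conjugate of a basic block, a basic block, or itself a reducible element $\bigoplus_j N_j$. In the first two cases $S_{k,m}^{-1}RS_{k,m}$ is generic, but $R$ is a nontrivial direct sum hence \emph{not} generic, and genericity is conjugation-invariant — contradiction unless... this is where scalars sneak in: the notion "generic" can fail for $R$ while $S^{-1}RS$ looks irreducible only if the block structure is invisible, which for $d$-tuples forces the blocks to be scalar multiples of the identity of matching form, i.e.\ $R \in \Sigma^d_m$. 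In the third case, $S_{k,m}^{-1}RS_{k,m} = \bigoplus_j N_j$ and $R = \bigoplus_i M_i$ are two direct-sum decompositions related by a similarity; by uniqueness of the decomposition of $R$ into generic (= irreducible) summands (Lemma \ref{lem4.10}, applied in $\gdel$), $S_{k,m}$ must permute and individually intertwine the irreducible constituents. Each such intertwining constraint confines $S_{k,m}$ to a proper subvariety of $\invm$ (by the genericity/trivial-commutant argument above, intertwiners between distinct irreducibles are zero, between isomorphic ones are scalar), unless the relevant constituents are scalar — again pushing us into $\Sigma^d_m$. So (iv) is arranged by deleting, at step $k$, for each of the finitely many possible block patterns $\pi$ and each choice of constituents from the finite pool, the (nowhere-dense) set of $S$ that would produce such a non-scalar coincidence. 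Assembling: at step $(k,m)$ we delete from $\invm$ the finite union of the nowhere-dense sets coming from (i)--(iv) against all earlier choices, then pick $S_{k,m}$ in the remainder near the $k$-th enumeration point; the result satisfies all four conditions and is dense. I expect writing (iv) carefully — in particular pinning down exactly when a "hidden" reduction forces scalars, and bookkeeping the finitely many partition patterns — to be where most of the real work lies.
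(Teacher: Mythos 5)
Your proposal is correct and follows the same route as the paper, whose entire proof of this lemma is the single sentence that the construction can be done inductively because each condition fails only on a subset of $\invm$ of lower dimension; your genericity/trivial-commutant analysis of the intertwiner constraints, the scalar exemption in (iv), and the Baire-type bookkeeping for density are exactly the details that sentence leaves implicit. The only blemish is your aside in case (iv) suggesting genericity of $S^{-1}RS$ could somehow survive a reduction of $R$ -- genericity is strictly conjugation-invariant, so the first two subcases are simply impossible and scalars only enter in the third subcase, where $S^{-1}RS=R$ automatically -- but this does not affect the validity of the argument.
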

\begin{proof}
This can be done inductively, because each of the conditions holds except on a set in 
$\invm$ of lower dimension than the whole space.
\end{proof}

\begin{lem}\label{lem6.30}
For each $\tau \in \n$, $(E_\tau, \s_\tau)$ is a well organized pair of size $n$.
\end{lem}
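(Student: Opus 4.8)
The plan is to verify directly that the pair $(E_\tau,\s_\tau)$ satisfies every clause of Definition \ref{def4.10}, using the properties built into Lemmas \ref{lem6.10} and \ref{lem6.x1}. First I would check that $E_\tau$ is a partial nc-set of size $n$: it is a subset of $\bigcup_{m=1}^n\m_m^d$ by construction; $E_\tau\cap\m_n^d$ is nonempty because $\calb_{n,\tau}\ne\emptyset$ (each $\calb_{m,\tau}$ contains $B_{m,1}$); and closure under direct sums $M_1\oplus M_2\in E_\tau$ whenever $M_1\in E_\tau\cap\m_{m_1}^d$, $M_2\in E_\tau\cap\m_{m_2}^d$ and $m_1+m_2\le n$ follows because, writing each $M_i$ in terms of its generic summands, the concatenated partition is nontrivial, so the direct sum lands in the reducible part $\calr_{m_1+m_2,\tau}$ by definition \eqref{6.130}. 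Finiteness of $E_\tau$ is clear since each $\calb_{m,\tau}$ is finite, each $\s_\tau\cap\invm$ is finite, there are finitely many $m\le n$, and $\calr_{m,\tau}$ is a finite union of direct sums of elements from finite sets; completeness ($M_1\oplus M_2\in E_\tau\implies M_1,M_2\in E_\tau$) follows from the uniqueness of the decomposition into generic summands together with the fact that every single generic summand appearing in any element of $E_{m,\tau}$ already lies in $E_\tau$.

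Next I would identify the generic and reducible parts. The elements of $\calr_{m,\tau}$ are $E_\tau$-reducible by construction (they are honest direct sums $\bigoplus_{i=1}^{|\pi|}M_i$ with each $M_i\in E_\tau$ and $|\pi|\ge 2$). The elements of $\calb_{m,\tau}$ are generic by \eqref{6.30}, and the elements of $S_{k,m}^{-1}\calb_{m,\tau}S_{k,m}$ are generic because genericity is preserved by similarity. Thus $\calr_m$ (in the notation of Definition \ref{def4.10}) coincides with $\calr_{m,\tau}$ and $\calg_m$ with $\calb_{m,\tau}\cup\bigcup_{k\le\tau}(S_{k,m}^{-1}\calb_{m,\tau}S_{k,m}\cap\gdel)$, so \eqref{4.50} holds and $\calr_m\cap\calg_m=\emptyset$.

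It then remains to exhibit the base. I would take $\calbm:=\calb_{m,\tau}$. Clause (iii) of Lemma \ref{lem6.x1} says the sets $S_{k,m}^{-1}\calb_{m,\tau}S_{k,m}$ are pairwise disjoint and disjoint from $\calb_{m,\tau}$, which together with clause (i) (each $S^{-1}\calb_{m,\tau}S\subseteq\calg_m$) gives that $\{\calb_{m,\tau}\}\cup\{S^{-1}\calb_{m,\tau}S\cap E_\tau: S\in\s_\tau\cap\invm\}$ is a partition of $\calg_m$, which is \eqref{4.60}; the $m=1$ case is trivial since $\s_\tau\cap\invn[1]=\emptyset$. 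Finally, \eqref{4.70} requires that whenever $M\in E_\tau\cap\m_m^d$ and $S\in\s_\tau\cap\invm$ satisfy $S^{-1}MS\in E_\tau$, one has $M\in\calb_{m,\tau}\cup\Sigma^d_m$: if $M$ is reducible this is clause (iv) of Lemma \ref{lem6.x1}, and if $M$ is generic then $M$ lies in exactly one block of the partition \eqref{4.60} — it cannot lie in $S_{k,m}^{-1}\calb_{m,\tau}S_{k,m}$ for any $k$, for then $S^{-1}MS$ would be generic and lie in $S_{k,m}^{-1}S^{-1}\calb_{m,\tau}S S_{k,m}$, contradicting clause (ii) unless $M\in\Sigma^d_m$ — so $M\in\calb_{m,\tau}$. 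I do not expect a genuine obstacle here; the only point requiring care is bookkeeping the interaction between the reducible elements and the conjugations (clause (iv)), and checking that the generic summands appearing inside reducible elements were already thrown into $E_\tau$ so that completeness holds — both of which are exactly what the construction \eqref{6.130} and Lemma \ref{lem6.x1} were arranged to guarantee.
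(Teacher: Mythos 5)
Your verification is correct and follows the same route as the paper, whose proof of Lemma~\ref{lem6.30} is simply the remark that the required conditions follow from Lemma~\ref{lem6.x1}; you have filled in the routine bookkeeping (closure under direct sums and completeness via the unique generic decomposition, identification of $\calgm$ and $\calrm$, and clauses \eqref{4.60}--\eqref{4.70} via parts (i)--(iv) of Lemma~\ref{lem6.x1}) that the paper leaves implicit. The only cosmetic slips are the reversed order of conjugation when you invoke clause (ii) and the unnecessary ``unless $M\in\Sigma^d_m$'' caveat there (a generic $M$ with $m\ge 2$ is never a scalar tuple), neither of which affects the argument.
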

\begin{proof}
The necessary conditions follow from Lemma~\ref{lem6.x1}.
\end{proof}

\subsection{Step 2}
In this step we shall construct an $\s_\tau$-invariant partial nc-model for $\Theta |E_\tau^{[2]}$ (where $(E_\tau,\s_\tau)$ is the sequence of well organized pairs constructed in step one) that is suitable for forming a cluster point. For each $\tau \in \n$, let $\calv^\tau$ denote the vector space of $\s_\tau$-invariant partial nc-functions on $E_\tau$.

First observe by Proposition \ref{prop5.50} that for each $\tau \in \n$, $\Theta |E_\tau^{[2]}$ has a special finite dimensional model, so there exist
$
u_\tau \in \calv^\tau_{\l(\h,\ltwoj)}
$
such that
\be\label{6.140}
\Theta (y,x) \=  u_\tau(y)^*\big((\idd -{\delta(y)}^*\d(x))\otimes \id{{\ltwo^{(J)}}}\big)u_\tau(x)
\ee
for all $x,y \in E_\tau$.

If $\tau \in \n$, $u \in \calv^\tau_{\l(\h,\ltwoj)}$, and $V$ is a unitary operator acting on $\ltwoj$, we define $V*u \in \calv^\tau_{\l(\h,\ltwoj)}$ by the formula,
\[
V*u (x) = (id_{\c^m} \otimes V) u(x),\qquad 1 \le m \le n,\ x\in E_\tau \cap \m_m^d.
\]
Observe that with this definition, if $V$ is a unitary acting on $\ltwoj$, then \eqref{6.140} holds with $u_\tau$ replaced with $V_\tau * u_\tau$.

Let $\{\xi_1, \dots, \xi_\mu\}$ be a basis for $\h$.
\begin{lem}\label{lem6.40}
Let $\langle M_s\rangle_{s=1}^\sigma$ be a finite sequence in $\m^d$ with $M_s \in \m_{n_s}$ for each $s$.  Let $u$ be a graded $\l(\h,\ltwoj)$ valued function
  on $\set{M_s}{ 1 \leq s \leq \sigma}$. There exists a unitary operator $V$ acting on $\ltwoj$ such that for each $s \le \sigma$,
\be\label{6.150}
\ran ((V*u)(M_s)) \subseteq \c^{n_s} \otimes \ltwoj_{\mu(n_1^2+\ldots+n_s^2  ) }.
\ee
\end{lem}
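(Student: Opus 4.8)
The plan is to reduce \eqref{6.150} to an elementary nested-basis construction in $\ltwoj$. First I would record, for a single $M_s \in \m_{n_s}^d$, that the operator $u(M_s)\colon \c^{n_s}\otimes\h \to \c^{n_s}\otimes\ltwoj$ touches only a finite-dimensional coordinate subspace of $\ltwoj$: writing $u(M_s)(e_i\otimes\xi_j) = \sum_{p=1}^{n_s} e_p\otimes v^{(s)}_{p,i,j}$ with $v^{(s)}_{p,i,j}\in\ltwoj$ (here $\{e_i\}$ is the standard basis of $\c^{n_s}$ and $\{\xi_j\}_{j=1}^{\mu}$ the basis of $\h$ fixed above), the subspace $W_s := \spn\{v^{(s)}_{p,i,j} : 1\le p,i\le n_s,\ 1\le j\le\mu\}$ satisfies $\dim W_s \le \mu n_s^2$ and $\ran u(M_s) \subseteq \c^{n_s}\otimes W_s$.

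Next, put $U_s := W_1 + W_2 + \cdots + W_s \subseteq \ltwoj$, so that $U_1 \subseteq U_2 \subseteq \cdots \subseteq U_\sigma$ is an increasing flag of finite-dimensional subspaces with $\dim U_s \le \mu(n_1^2 + \cdots + n_s^2) =: k_s$. The heart of the argument is then to produce a single unitary $V$ on $\ltwoj$ with $V(U_s) \subseteq \ltwoj_{k_s}$ for every $s$ simultaneously, where $\ltwoj_k$ is the span of the first $k$ standard basis vectors of $\ltwoj$ (note $\ltwoj_{k_1}\subseteq\ltwoj_{k_2}\subseteq\cdots$). For this I would build an orthonormal basis $\{f_\ell\}_{\ell\ge1}$ of $\ltwoj$ adapted to the flag: take an orthonormal basis of $U_1$, extend it to one of $U_2$, then to one of $U_3$, and so on through $U_\sigma$, and finally extend to an orthonormal basis of all of $\ltwoj$ (using separability). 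Then, with $d_s := \dim U_s$, the first $d_s$ vectors $f_1,\dots,f_{d_s}$ span $U_s$. Letting $\{\epsilon_\ell\}$ be the standard basis of $\ltwoj$ and defining $V$ by $Vf_\ell := \epsilon_\ell$, we obtain a unitary with $V(U_s) = \spn\{\epsilon_1,\dots,\epsilon_{d_s}\} \subseteq \ltwoj_{k_s}$, since $d_s \le k_s$.

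Finally I would assemble the conclusion: since $(V*u)(M_s) = (\id{\c^{n_s}}\otimes V)\,u(M_s)$ and $V$ is invertible,
\[
\ran\big((V*u)(M_s)\big) = (\id{\c^{n_s}}\otimes V)\big(\ran u(M_s)\big) \subseteq \c^{n_s}\otimes V(U_s) \subseteq \c^{n_s}\otimes\ltwoj_{k_s},
\]
which is exactly \eqref{6.150}. The only point needing any care is the dimension bookkeeping — in particular that the generating vectors $v^{(s)}_{p,i,j}$ number $\mu n_s^2$, with two factors of $n_s$ (one from the input copy of $\c^{n_s}$ and one from the output copy), not $\mu n_s$ — together with keeping the subspaces $U_s$ nested so that one unitary serves all $s$ at once; there is no serious obstacle, the content of the lemma being essentially this bookkeeping plus the adapted-basis construction.
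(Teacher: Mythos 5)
Your proof is correct for the lemma as stated, and it follows essentially the same route as the paper's: each $u(M_s)$ has range inside $\c^{n_s}\otimes W_s$ for a subspace $W_s$ of dimension at most $\mu n_s^2$ (your count, with two factors of $n_s$, is the right one), one forms the increasing flag of partial sums, and one conjugates by a unitary adapted to that flag.

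One structural detail diverges from the paper and is worth flagging because of how the lemma is used immediately afterwards. The paper runs the construction componentwise: for each coordinate $r\le J$ of $\ltwoj$ it forms subspaces $\mathcal{X}_s^r\subseteq\ltwo$ and produces a single unitary of $\ltwo$ applied in each of the $J$ slots, i.e.\ a $V$ of the form $\idd_{\c^J}\otimes V_0$ (this is also why $\ltwoj_N$ there means $(\ltwo_N)^{(J)}$ rather than the span of the first $N$ standard basis vectors of $\ltwoj$). That special form is exactly what makes $V*u_\tau$ again a $\delta$-model in \eqref{6.140}: one needs $\idd_{\c^m}\otimes V$ to commute with $[\idd-\delta(y)^*\delta(x)]\otimes\idd_{\ell^2}$, which holds when $V=\idd_{\c^J}\otimes V_0$ but not for an arbitrary unitary of $\ltwoj$ such as the one your flag-plus-adapted-basis construction produces, since that construction ignores the $J$-fold direct-sum structure. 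The repair is cheap: carry out your adapted-basis argument inside $\ell^2$ with the flag $\mathcal{X}_s=\sum_{r\le J}\mathcal{X}_s^r$ (at worst inflating the dimension bound by a factor of $J$, which is harmless, since all that matters downstream is that the bound is finite and independent of $\tau$), and then set $V=\idd_{\c^J}\otimes V_0$. So: no gap in the proof of the statement itself, but the statement is invoked with this extra structure on $V$, and your $V$ does not automatically have it.
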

\begin{proof}
For each $1 \leq r \leq J$, let $u^r$ be the $r^{\rm th}$ component of $u$.
For each $s$,  each $i,j \le n_s$, and each $1 \leq \alpha \leq \mu$, define $\mu n_s^2$ elements $w^r_{s,i,j,\alpha} \in \ltwo$ by
\be\label{6.160}
w^r_{s,i,j,\alpha} = \sum_{l=1}^\infty \langle u^r(M_s)e_j \otimes  \xi_\alpha \, , e_i \otimes \elv\rangle \elv
\ee
In \eqref{6.160}, $\{e_i\}$ is the standard basis for $\c^n$ and $\{\elv\}$ denotes the standard basis for $\ltwo$. For each $s \le \sigma$ define a subspace $\mathcal{W}_s$  of $\ltwo$ by
\[
\mathcal{W}_s^r =\spn  \set{w^r_{s,i,j,\alpha}}{1 \le i,j \le n_s}, \qquad
\mathcal{W}_s = \oplus \mathcal{W}_s^r
\]
and set
\[
\mathcal{X}_s^r   = \mathcal{W}_1^r+ \ldots \mathcal{W}_s^r.
\]
If we set $\nu_s = \max_r \dim \mathcal{X}_s^r$, then there exists a unitary operator acting on $\ltwo$ satisfying $V(\mathcal{X}_1^r) = \ltwo_{\nu_1}$, $V(\mathcal{X}_s^r \ominus \mathcal{X}_{s-1}^r) = \ltwo_{\nu_s} \ominus \ltwo_{\nu_{s-1}}$ for
 $s = 2,\ldots,\sigma$, and $V(\mathcal{X}_\sigma^{r\ \perp}) = \ell_{\nu_\sigma}^{2\ \perp}$. For such a $V$ we have that
\be\label{6.170}
V(\mathcal{X}_s^r) = \ltwo_{\nu_s} \subseteq \ltwo_{\mu(n_1^2+\ldots+n_s^2  )}
\ee
for each $s \le \sigma$.

Now fix $s\le \sigma$ and $j\le n_s$. Using \eqref{6.160} and \eqref{6.170}, we see that\\
\begin{align*}
(V*u)(M_s)(e_j \otimes \xi_\alpha)&= ({\rm id}_{\c^{n_s}} \otimes V) u(M_s)e_j\otimes \xi_\alpha\\ 
&=  ({\rm id}_{\c^{n_s}} \otimes V) \bigoplus_r \sum_{i,l}\langle u^r(M_s)e_j \otimes \xi_\alpha,e_i\otimes \elv\rangle e_i \otimes \elv\\ 
&=\bigoplus_r  \sum_{i}\Big(e_i \otimes V\big(\sum_l\langle u^r(M_s)e_j \otimes \xi_\alpha,e_i\otimes \elv\rangle \elv\big)\Big)\\ 
&=\bigoplus_r  \sum_{i}\Big(e_i \otimes V(w^r_{s,i,j,\alpha})\Big)\\ 
& \in \c^{n_s} \otimes V(\mathcal{W}_s)\\ 
& \subseteq \c^{n_s} \otimes V(\oplus_{r=1}^J \mathcal{X}_s^r)\\ 
& \subseteq \c^{n_s} \otimes \ltwoj_{\mu(n_1^2+\ldots+n_s^2)  }.
\end{align*}
As $e_1,\ldots e_{n_s}$ span $\c^{n_s}$, this proves that \eqref{6.150} holds for each $s\le \sigma$.
\end{proof}
Fix $\tau$  and let $u_\tau$ be as in \eqref{6.140}. We successively enumerate
 the elements of $E_1, E_2\setminus E_1, E_3 \setminus E_2,\ldots, E_\tau \setminus E_{\tau-1}$ and apply Lemma \ref{lem6.40} to obtain a unitary $V_\tau$ and integers $N_t$ (that do not depend on $\tau$) such that for each $t \le \tau$,
\be\label{6.180}
\ran ((V_\tau*u)(x)) \subseteq \c^{m} \otimes \ltwoj_{N_t }\qquad 1 \le m \le N_t,\ x  \in E_t \cap \m_m^d.
\ee
Replacing $u_\tau$ in \eqref{6.140} with $V_\tau * u_\tau$ we thereby obtain the following improvement on \eqref{6.140}.
\begin{lem}\label{lem6.50}
There exists a sequence $\langle N_t\rangle_{t=1}^\infty$ such that for each $\tau \in \n$, there exist
\be\label{6.185}
u_\tau \in \calv^\tau_{\l(\h,\ltwoj)}
\ee
such that
\be\label{6.190}
\Theta(y,x) \=  u_\tau(y)^*\big([\idd -{\d(y)}^*\d(x)]\otimes \id{\ltwo}\big)u_\tau(x)
\ee
for all $x,y \in E_\tau$  and such that for each $t \le \tau$,
\be\label{6.200}
\ran (u_\tau(x)) \subseteq \c^{m} \otimes \ltwoj_{N_t }\qquad 1 \le m \le n,\ x  \in E_t \cap \m_m^d.
\ee
\end{lem}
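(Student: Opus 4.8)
The plan is to build $u_\tau$ in two stages. First I would extract \emph{some} $\s_\tau$-invariant finite-dimensional partial nc-model of $\Theta|_{E_\tau^{[2]}}$, then rotate it by a unitary on the model space so that its ranges land in coordinate subspaces whose dimensions do not grow with $\tau$. For the first stage, Proposition~\ref{prop5.50} applies: each $(E_\tau,\s_\tau)$ is a well organized pair inside $\gdel$ and $\Theta$ satisfies \eqref{eqhw2}, so $\Theta|_{E_\tau^{[2]}}$ has an $\s_\tau$-invariant finite-dimensional partial nc-model, with model space $\ltwo_{N(\tau)}^{(J)}$ for some $N(\tau)$; embedding $\ltwo_{N(\tau)}$ in $\ltwo$ we may view this as a $u_\tau\in\calv^\tau_{\l(\h,\ltwoj)}$ satisfying \eqref{6.140} (equivalently \eqref{6.190}). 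This already secures \eqref{6.185} and \eqref{6.190}; what remains is the range bound \eqref{6.200}, whose key feature is that $N_t$ must not depend on $\tau$.

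That uniformity in $\tau$ is the only real subtlety, and it is handled through Lemma~\ref{lem6.40} together with the observation that the bound that lemma produces for the $s$-th term $M_s$ of an enumeration $M_1,M_2,\dots$ — namely $\ran((V*u)(M_s))\subseteq\c^{n_s}\otimes\ltwoj_{\mu(n_1^2+\cdots+n_s^2)}$, with $\mu=\dim\h$ and $M_s\in\m_{n_s}^d$ — depends only on the enumeration (its order and the matrix sizes), not on the function $u$ being rotated. So I would fix, once and for all, a single enumeration $M_1,M_2,\dots$ of $\bigcup_{\tau}E_\tau$ obtained by listing the elements of $E_1$, then those of $E_2\setminus E_1$, then those of $E_3\setminus E_2$, and so on; each such layer is a finite set that is unchanged as $\tau$ grows, so $E_t$ is always enumerated by the fixed initial segment $M_1,\dots,M_{\sigma_t}$, where $\sigma_t$ is the position of the last element of $E_t$. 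Setting $N_t:=\mu(n_1^2+\cdots+n_{\sigma_t}^2)$ then makes $N_t$ depend on $t$ alone. For each fixed $\tau$, I would apply Lemma~\ref{lem6.40} to the initial segment $M_1,\dots,M_{\sigma_\tau}$ (which enumerates $E_\tau$) and to the $u_\tau$ from the first stage, obtaining a unitary $V_\tau$ on $\ltwoj$ with $\ran((V_\tau*u_\tau)(M_s))\subseteq\c^{n_s}\otimes\ltwoj_{\mu(n_1^2+\cdots+n_s^2)}$ for all $s\le\sigma_\tau$; in particular $\ran((V_\tau*u_\tau)(x))\subseteq\c^m\otimes\ltwoj_{N_t}$ whenever $x\in E_t\cap\m_m^d$ with $t\le\tau$, which is \eqref{6.180}.

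Finally I would replace $u_\tau$ by $V_\tau*u_\tau$. As noted just before Lemma~\ref{lem6.40}, $V_\tau*u_\tau$ is again an $\s_\tau$-invariant partial nc-function in $\calv^\tau_{\l(\h,\ltwoj)}$ and \eqref{6.190} is unaffected, since the conjugating unitary acts in the $\ltwo$-coordinate of the model space and commutes with $[\idd-\d(y)^*\d(x)]\otimes\id{\ltwo}$. Hence \eqref{6.185}, \eqref{6.190} and \eqref{6.200} all hold with this choice of $u_\tau$. I expect no genuine obstacle beyond setting up the master enumeration so that the per-layer dimension budget is $\tau$-independent: the two substantive inputs — a finite-dimensional model (Proposition~\ref{prop5.50}) and the range-compression lemma (Lemma~\ref{lem6.40}) — are already in hand, and everything else is bookkeeping.
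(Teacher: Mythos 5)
Your proposal is correct and follows essentially the same route as the paper: obtain a finite-dimensional $\s_\tau$-invariant partial nc-model from Proposition~\ref{prop5.50}, then apply Lemma~\ref{lem6.40} to the successive enumeration of $E_1, E_2\setminus E_1,\dots,E_\tau\setminus E_{\tau-1}$ and replace $u_\tau$ by $V_\tau * u_\tau$. Your explicit observation that the dimension bound produced by Lemma~\ref{lem6.40} depends only on the enumeration and the matrix sizes, not on the function being rotated, is exactly what makes the $N_t$ independent of $\tau$; the paper asserts this without elaboration.
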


\subsection{Step 3}
In this step we shall form a cluster point of the model described in Lemma \ref{lem6.50}. This will result in a model for $\Theta$ on $\gdel$ that is `nc to order n' as described
in Lemma \ref{lem6.60} below.

Fix $\tau$ and let $u_\tau$ be as in Lemma \ref{lem6.50}. 
Note that \eqref{6.185} implies that
\be\label{6.205}
u_\tau(M_1\oplus M_2) = u_\tau(M_1) \oplus u_\tau (M_2)
\ee
whenever $M_1 \in \calb_{m_1,\tau}$, $M_2 \in \calb_{m_2,\tau}$ and $m_1+m_2 \le n$. Also, \eqref{6.185} implies that
\be\label{6.206}
u_\tau(S^{-1}MS) = (S^{-1}\otimes \id{{{\ltwo}^{(J)}}}) u_\tau(M) (S\otimes \idh)
\ee
whenever $M \in \calb_{n,\tau}$, $S \in \mathcal{S}_\tau$, and $S^{-1}MS \in \gdel$.
By Lemma~\ref{lem6.50} and Theorem \ref{thm5.10}, there exist for $m=1,\ldots,n$ isometries
(which depend on $\tau$, though we suppress this in the notation)
\[
\begin{bmatrix}A_m&B_m\\C_m&D_m\end{bmatrix}:\c^{m} \otimes \k_1 \oplus (\c^m \otimes {\ltwoj}) \to \c^{m} \otimes \k_2 \oplus (\c^m \otimes {\ltwoj})
\]
such that for each $m=1,\ldots,n$
\be\label{6.210}
\O_\tau(x) := A_m + B_m \d(x)(\idd -D_m \d(x))^{-1}C_m, \qquad x \in E_\tau \cap \m_m^d
\ee
satisfies
\be\label{6.211}
\O_\tau(x) \Psi (x) \= \Phi(x) ,
\ee
and
\be\label{6.220}
v_\tau(x)  \ :=\  (\idd -D_{m} \d(x))^{-1}C_m,
\ee
satisfies
\be\label{6.2201}
v_\tau(x)  \Psi(x) \= u_\tau(x)
 \qquad x \in E_\tau \cap \m_m^d.
\ee

For $m > n$, choose 
\[
\begin{bmatrix}A_m&B_m\\C_m&D_m\end{bmatrix}:\c^{m} \otimes \k_1 \oplus (\c^m \otimes {\ltwoj}) \to \c^m \otimes \k_2 \oplus (\c^{m} \otimes {\ltwoj})
\]
to be an arbitrary isometry.

Define an $\l(\k_1,\k_2)$-valued  graded functions $\O_\tau$,
 an $\l(\k_1,\ltwoj)$-valued graded function  $V_\tau$, 
and  an $\l(\h,\ltwoj)$-valued graded function  $U_\tau$, 
on $\gdel$ by the formulas
\be\label{6.230}
\O_\tau(x) = A_m + B_m \d(x)(\idd -D_m \d(x))^{-1}C_m, \qquad m\in \n,\ x \in \gdel \cap \m_m^d
\ee
\be\label{6.240}
V_\tau(x) = (\idd -D_{m} \d(x))^{-1}C_m, \qquad m\in \n,\ x \in \gdel \cap \m_m^d
\ee
\be\label{6.2402}
U_\tau(x) = V_\tau(x) \Psi(x), \qquad m\in \n,\ x \in \gdel \cap \m_m^d
\ee
Note that with these definitions that
\be\label{6.245}
\idd-\O_\tau(y)^*\O_\tau(x) = V_\tau(y)^*(\idd -\d(y)^*\d(x))V_\tau(x)
\ee
whenever $m \in \n$ and $x \in \gdel \cap \m_m^d$.

It follows easily from \eqref{6.230} and \eqref{6.240} that $\langle \O_\tau \rangle_{\tau=1}^\infty$ and $\langle V_\tau \rangle_{\tau=1}^\infty$ are uniformly locally bounded sequences of holomorphic functions on $\gdel$. Hence, by Proposition \ref{prop3.30} there exist a subsequence $\tau_j$ and holomorphic functions $\O$ and $U$ such that
\be\label{6.250}
\O_{\tau_j} \to \O
\ee
and
\be\label{6.260}
V_{\tau_j} \stackrel{\text{wk}}{\to} V.
\ee
Let $U = V \Psi$.
Now notice that \eqref{6.210} and \eqref{6.230} imply that \be\label{6.265}
\O_\tau|E_\tau = \O|E_\tau
\ee
for each $\tau$. Hence, as both $\O \Psi$ and $\Phi$ are holomorphic, \eqref{6.x50} and \eqref{6.250} imply that
\be\label{6.270}
\O(x) \Psi(x) \= \Phi(x)
\ee
for each $m\le n$ and  $x \in \gdel \cap \m_m^d$. Also notice that \eqref{6.220}, \eqref{6.2201} and \eqref{6.240} imply that
\be\label{6.275}
U_\tau|E_\tau \= u_\tau |E_\tau \= U|E_\tau
\ee
for each $\tau$. Hence, it follows from \eqref{6.200} that if $m \le n$, $t \le \tau_j$ and $x \in E_t \cap \m_m^d$, then
\[
\ran(U_{\tau_j}(x)) \subseteq \c^m \otimes {\ltwoj_{{N_t}}}
\]
Therefore, by \eqref{6.2402} and \eqref{6.260},
\be\label{6.280}
U_{\tau_j}(x) \to U(x)
\ee
whenever $t \in \n$, $m\le n$ and  $x \in E_t \cap \m_m^d$. Combining \eqref{6.245}, \eqref{6.270}, and \eqref{6.280} gives that
\be\label{6.290}
\Psi(y)^*\Psi(x) -\Phi(y)^*\Phi(x) \=
 U(y)^*[\idd -\d(y)^*\d(x)]U(x)
\ee
whenever $x,y \in \cup_{\tau=1}^\infty E_\tau$. As both the right and left hand sides of \eqref{6.290} are holomorphic in $x$ and coholomorphic in $y$, it follows that
\be\label{6.300}
\forall_{m \le n}\ \forall_{x \in \gdel \cap \m_m^d}\ 
\Psi(y)^*\Psi(x) -\Phi(y)^*\Phi(x) \=
 U(y)^*[\idd -\d(y)^*\d(x)]U(x)
\ee
Two additional properties of $U$, as constructed above, are described in the following definition.
\begin{defin}\label{def6.30}
Let $D$ be an nc-domain. We say that \emph{$U$ is an $\l(\h,{\ltwo}^{(J)})$-valued nc-function to order $n$ on $D$} if $U$ is a graded $\l(\h,{\ltwo}^{(J)})$-valued   function defined on $D \cap \cup_{m \le n} \m_m^d$, $U$ is holomorphic,
\be\label{6.310}
x_1 \in D \cap \m_{m_1}^d,\ x_2 \in D \cap \m_{m_2}^d,\ m_1+m_2 \le n \implies U(x_1 \oplus x_2)=U(x_1) \oplus U(x_2),
\ee
and
\be\label{6.320}
m \le n,\ x \in D \cap \m_m^d, S \in \invm,\ S^{-1}x S \in D \cap \m_m^d \implies U(S^{-1}x S)=(S^{-1} \otimes {\id{{\ltwoj}}})\ U(x) (S \otimes \idh).
\ee
\end{defin}
The definition is made for a general nc-domain $D$. We wish to show that \eqref{6.310} and  \eqref{6.320} hold when $D = \gdel$ and $U$ is as constructed above.

To prove \eqref{6.310} assume that $M_1 \in \calb_{m_1,t}$ and $M_2 \in \calb_{m_2,t}$ where $m_1+m_2 \le n$. Then
\begin{align*}
\ U(M_1 \oplus M_2)&\\
\eqref{6.280}\qquad=&\lim_{j\to \infty} U_{\tau_j}(M_1 \oplus M_2)\\
\eqref{6.275}\qquad=&\lim_{j\to \infty} u_{\tau_j}(M_1 \oplus M_2)\\
\eqref{6.205}\qquad =&\lim_{j\to \infty} u_{\tau_j}(M_1) \oplus u_{\tau_j}(M_2)\\
\eqref{6.275}\qquad =&\lim_{j\to \infty} U_{\tau_j}(M_1) \oplus U_{\tau_j}(M_2)\\
\eqref{6.280}\qquad =&U(M_1)\oplus U(M_2).
\end{align*}
Hence, as $U$ is holomorphic, \eqref{6.x50} implies that \eqref{6.310} holds.

To prove \eqref{6.320} assume that $M \in \calb_{m,t}$,  $S  \in \s_t $ and  $S^{-1}MS \in \gdel$ (so that by \eqref{6.130}, $S^{-1}MS \in E_{m,t}$). Then
\begin{align*}
\ U(S^{-1}MS)&\\
\eqref{6.280}\qquad=&\lim_{j\to \infty} U_{\tau_j}(S^{-1}MS)\\
\eqref{6.275}\qquad=&\lim_{j\to \infty} u_{\tau_j}(S^{-1} MS)\\
\eqref{6.206}\qquad =&\lim_{j\to \infty}( S^{-1} \otimes {\id{{\ltwoj}}})\ u_{\tau_j}(M) (S\otimes \idh)\\
\eqref{6.275}\qquad =&\lim_{j\to \infty} (S^{-1} \otimes {\id{{{\ltwoj}}}})\ U_{\tau_j}(M) (S\otimes \idh)\\
\eqref{6.280}\qquad =&S^{-1} \otimes {\id{{\ltwoj}}}\ U(M)S\otimes \idh.
\end{align*}


The following lemma summarizes what has been proved. The lemma is expressed in a notation that reflects the dependence of $U$ on $n$.
\begin{lem}\label{lem6.60}
Suppose $\Psi$ is an $\l(\h,\k_1)$ valued nc-function on $\gdel$,
  $\Phi$ is an $\l(\h,\k_2)$-valued nc-function on $\gdel$, and suppose that 
$\Theta(x,x) = \Psi(x)^* \Psi(x) - \Phi(x)^* \Phi(x) \geq 0$ on $\gdel$.
For each $n \in \n$  there exists $U_n$, such that $U_n$ is an ${\ltwoj}$-valued nc-function to order $n$ on $\gdel$, and such that
\be\label{6.330}
\Psi(y)^*\Psi(x) -\Phi(y)^*\Phi(x) \= U_n(y)^*[\idd -\d(y)^*\d(x)]U_n(x)
\ee
\end{lem}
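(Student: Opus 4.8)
The statement simply records the output of Steps~1--3 above, so the plan is to assemble those pieces and confirm that the nc-structure survives the passage to a cluster point. First I would reduce: the case $d=1$ follows from $d=2$ as already noted, and when $n=1$ there are no nontrivial direct sums or similarities to test at level one, so one may take $U_1$ to be the restriction to $\gdel\cap\m_1^d$ of the function produced for $n=2$; thus fix $n\ge 2$. By Step~1 (Lemmas~\ref{lem6.10}, \ref{lem6.x1}, \ref{lem6.30}) there is, for every $\tau\in\n$, a well organized pair $(E_\tau,\s_\tau)$ of size $n$ inside $\gdel$ whose bases $\calb_{m,\tau}$ increase with $\tau$ to a dense subset of $\gdel\cap\m_m^d$ and whose similarity sets $\s_\tau\cap\invm$ increase to a dense subset of $\invm$, for each $m\le n$.

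Next I would invoke Step~2 (Lemma~\ref{lem6.50}): each $\Theta|_{E_\tau^{[2]}}$ has an $\s_\tau$-invariant partial nc $\delta$-model $u_\tau\in\calv^\tau_{\l(\h,\ltwoj)}$ whose range over $E_t$ lies in the \emph{fixed} finite-dimensional space $\c^m\otimes\ltwoj_{N_t}$, with $N_t$ independent of $\tau$. Feeding $u_\tau$ into Theorem~\ref{thm5.10} gives an $\s_\tau$-invariant partial nc $\delta$-realization; extending the level-$m$ isometries arbitrarily for $m>n$ and substituting into the transfer-function formulas \eqref{6.230}--\eqref{6.2402} produces holomorphic graded functions $\O_\tau$, $V_\tau$ and $U_\tau:=V_\tau\Psi$ on all of $\gdel$, with the global identity \eqref{6.245} holding everywhere and with $\O_\tau, U_\tau$ reproducing the interpolation data on $E_\tau$ (as in \eqref{6.265}, \eqref{6.275}). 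Since the blocks $J_m$ are isometries, $\langle\O_\tau\rangle$ and $\langle V_\tau\rangle$ are uniformly locally bounded holomorphic families on $\gdel$, so Proposition~\ref{prop3.30} furnishes a subsequence with $\O_{\tau_j}\to\O$ and $V_{\tau_j}\wkto V$; set $U_n:=V\Psi$.

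It remains to verify the two assertions. Because $\ran U_{\tau_j}(x)$ sits in the fixed finite-dimensional subspace $\c^m\otimes\ltwoj_{N_t}$ whenever $x\in E_t$ and $\tau_j\ge t$, weak convergence of $\langle V_{\tau_j}\rangle$ upgrades to pointwise convergence $U_{\tau_j}(x)\to U_n(x)$ on $\bigcup_\tau E_\tau$; combining this with $\O_{\tau_j}\to\O$ and \eqref{6.245} yields the model relation \eqref{6.330} on $\bigcup_\tau E_\tau$, whence holomorphy in $x$, coholomorphy in $y$, and density \eqref{6.x50} promote it to all of $\gdel\cap\m_m^d$ with $m\le n$. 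That $U_n$ is an $\l(\h,\ltwoj)$-valued nc-function to order $n$ in the sense of Definition~\ref{def6.30} is the pair of computations displayed just before the lemma: the direct-sum identity \eqref{6.310} descends from $u_\tau(M_1\oplus M_2)=u_\tau(M_1)\oplus u_\tau(M_2)$ on the dense increasing bases, and the similarity identity \eqref{6.320} descends from the $\s_\tau$-invariance \eqref{6.206} of $u_\tau$ via density of $\bigcup_\tau(\s_\tau\cap\invm)$ in $\invm$, in both cases using holomorphy to pass from the bases to the whole set. The one genuinely delicate ingredient --- and the step I expect to be the obstacle --- is the uniform range control arranged in Step~2 by the unitary-rotation device of Lemma~\ref{lem6.40}; without a common finite-dimensional target $\ltwoj_{N_t}$ over each $E_t$, the weak limit of $\langle V_{\tau_j}\rangle$ would fail to recover the model identity, since weak convergence does not pass through the sesquilinear products $V(y)^*[\,\cdot\,]V(x)$.
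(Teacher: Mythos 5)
Your proposal is correct and follows essentially the same route as the paper: it assembles Steps 1--3 (the well organized pairs $(E_\tau,\s_\tau)$, the $\s_\tau$-invariant partial nc-models with the uniform finite-dimensional range control of Lemma~\ref{lem6.40}, the realizations from Theorem~\ref{thm5.10}, and the Montel/weak-convergence cluster-point argument of Proposition~\ref{prop3.30}), then upgrades weak to pointwise convergence on the dense sets and uses holomorphy/coholomorphy with density to obtain \eqref{6.330} and the order-$n$ nc properties. You also correctly single out the range-control device as the load-bearing step that lets the model identity survive the passage to the weak limit.
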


\subsection{Step 4}
In this step we complete the proof that $\Theta$ has a $\delta$-model by taking a cluster point of the `order $n$' models described in Lemma \ref{lem6.60}.

Let $\langle U_n \rangle_{n=1}^\infty$ be a sequence with $U_n$ as in Lemma \ref{lem6.60} for each $n \in \n$. For each $n \in \n$, choose a dense sequence $\langle M_{n,\tau} \rangle_{\tau = 1}^\infty$ in $\gdel \cap \mn^d$ and  a dense sequence $\langle S_{n,\tau} \rangle$ in $\invn$. 
As in the proof of Lemma \ref{lem6.50} we may employ Lemma \ref{lem6.40} to obtain a sequence of unitaries  $\langle V_n \rangle_{n=1}^\infty$ acting on ${\ltwo}^{(J)}$ such that if we define $W_n = V_n*U_n$, then $W_n$ satisfies the conditions of Lemma \ref{lem6.60} and in addition satisfies
\be\label{6.340}
\forall_{n\in \n}\ \exists_{N}\ \forall_{m \le n}\ \forall_{s,t \le m}\  \ran{W_m (M_{s,t})} \subseteq \c^s \otimes \ltwo_{N}^{(J)}.
 \ee
Hence, if we use Proposition \ref{prop3.30} to obtain an $\l(\h,{\ltwo}^{(J)})$-valued holomorphic graded function $W$ on $\gdel$ and a subsequence $\langle n_j\rangle$ such that
\[
W_{n_j} \wkto W,
\]
then
\be\label{6.350}
\forall_{n,\tau \in \n}\ W_{n_j}(M_{n,\tau}) \to W(M_{n,\tau}).
\ee
(Note that \eqref{6.350} is in finite dimensions, so weak convergence gives norm convergence).
To see that $W$ gives rise to an nc-model for $\Theta$ we need to prove the following three assertions:
\be\label{6.360}
\Psi(y)^*\Psi(x) -\Phi(y)^*\Phi(x) = W(y)^*[\idd -\d(y)^*\d(x)]W(x)
\ee
whenever $n \in \n$ and $x,y \in \gdel \cap \mn^d$,
\be\label{6.370}
W(x_1 \oplus x_2) = W(x_1) \oplus W(x_2)
\ee
whenever $n_1,n_2 \in \n$, $x_1\in \gdel \cap \m_{n_1}^d$, and $x_2\in \gdel \cap \m_{n_2}^d$, and
\be\label{6.380}
W(S^{-1}x S) = (S^{-1} \otimes {\id{{\ltwoj}}})\ W(x)S
\ee
whenever $n \in \n$, $x \in \gdel \cap \mn^d$, $S \in \invn$, and $S^{-1}xS \in \gdel$.

To see that \eqref{6.360} holds observe that \eqref{6.330} and \eqref{6.350} imply that \eqref{6.360} holds for each $n$ whenever $x,y \in \set{M_{n,\tau}}{\tau \in \n}$. Hence, as  $x,y \in \set{M_{n,\tau}}{\tau \in \n}$ is dense in $\gdel$ and both sides of \eqref{6.360} are holomorphic in $x$ and coholomorphic in $y$, in fact, \eqref{6.360} holds for all $x,y \in \gdel \cap \mn^d$.

\eqref{6.370} follows by  noting that \eqref{6.310} and \eqref{6.350} imply that \eqref{6.370} holds whenever   $x_1 \in \set{M_{n_1,\tau}}{\tau \in \n}$ and $x_2 \in \set{M_{n_2,\tau}}{\tau \in \n}$. Hence, by density and continuity, \eqref{6.370}  holds for all  $x_1\in \gdel \cap \m_{n_1}^d$ and $x_2\in \gdel \cap \m_{n_2}^d$. Likewise, \eqref{6.380} follows from \eqref{6.320} and  \eqref{6.350}.

This proves Theorem~\ref{thm6.10}. \ep

%
%
%

\section{$\d$ nc-models and nc-realizations}
\label{ssecdnc}

\begin{thm}\label{thm7.10}Let $\h,\k_1,\k_2$ be finite dimensional Hilbert spaces.
 Let $\delta$ be an $I\times J$ matrix with entries in $\pd$,  
let $\Psi$ be a graded $\l(\h,\k_1)$-valued function on $\gdel$,
and let  $\Phi$ be graded $\l(\h,\k_2)$-valued  function on $G_\delta$. 
Let $\Theta(y,x) = \Psi(y)^* \Psi(x) - \Phi(y)^* \Phi(x)$.
The following are equivalent.\\
\qquad (1) $\Theta(x,x) \geq 0$ on $\gdel$.\\
\qquad (2) $\Theta$ has a $\delta$ nc-model.\\
\qquad (3) There exists an nc $\l(\k_1,\k_2)$-valued function $\O$  satsifying
$\O \Psi = \Phi$ and such that $\O$ has a  free $\delta$-realization.\\
\end{thm}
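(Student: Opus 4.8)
The plan is to prove the cycle $(3)\Rightarrow(1)\Rightarrow(2)\Rightarrow(3)$. The first two links are short. For $(3)\Rightarrow(1)$: a free $\delta$-realization is in particular an nc-realization, so by Proposition~\ref{prop6.10} the function $[\idd-\O(y)^*\O(x)]$ has a $\delta$ nc-model; restricting that model to the diagonal and using that $\|\d(x)\|<1$ on $\gdel$ shows $\idd-\O(x)^*\O(x)\ge 0$ on $\gdel$, so that $\Theta(x,x)=\Psi(x)^*[\idd-\O(x)^*\O(x)]\Psi(x)\ge 0$. (The same diagonal observation gives $(2)\Rightarrow(1)$ at once: $\Theta(x,x)=U(x)^*[\idd-\d(x)^*\d(x)]U(x)\ge 0$.) For $(1)\Rightarrow(2)$ I would simply invoke Theorem~\ref{thm6.10}.

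The substance is $(2)\Rightarrow(3)$. Start from a $\delta$ nc-model $U\in\nc_{\l(\h,\M\otimes\c^J)}(\gdel)$ of $\Theta$ and rewrite the model identity as
\[
\Psi(y)^*\Psi(x)+(\d U)(y)^*(\d U)(x)=\Phi(y)^*\Phi(x)+U(y)^*U(x),\qquad x,y\in\gdel .
\]
This is a lurking-isometry identity: at each level $m$ the assignment $\begin{bmatrix}\Psi(x)\\ (\d U)(x)\end{bmatrix}\mapsto\begin{bmatrix}\Phi(x)\\ U(x)\end{bmatrix}$ is well defined and isometric on the span of the ranges, and (adjoining an $\ell^2$ if necessary) extends to a genuine isometry. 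Defining $\O$ and $v$ by the transfer-function formulas \eqref{eqfw2}--\eqref{eqfw3}, exactly as in Theorem~\ref{thm5.10}, yields $\O\Psi=\Phi$, $U=v\Psi$, and a $\delta$-realization of $\O$. What distinguishes Theorem~\ref{thm7.10} from Theorem~\ref{thm6.10} is that all of this must be done with a \emph{free} colligation, $J_m=\idd_{\c^m}\otimes J_1$.

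To obtain the free form I would not glue the $J_m$ after the fact: nc-functions are not determined by their restriction to scalar points (compare $x^1x^2$ with $x^2x^1$), so a colligation built from level-$1$ data alone need not reproduce $\O\Psi=\Phi$ at higher levels. Instead I would install the covariance already at the finite stage of the proof of Theorem~\ref{thm6.10}. On each well-organized pair $(E_\tau,\s_\tau)$ one has a partial nc-model by Proposition~\ref{prop5.50}; because the base points $B_{m,t}$ are generic with $\com(B_{m,t})=\c\,\idcm$ and the sets $\calb_{m,\tau}$ are dense in $\gdel\cap\m_m^d$, one can run the lurking isometry of Theorem~\ref{thm5.10} so that the colligation is forced into the shape $\begin{bmatrix}A_m&B_m\\ C_m&D_m\end{bmatrix}=\idd_{\c^m}\otimes\begin{bmatrix}A_1^{(\tau)}&B_1^{(\tau)}\\ C_1^{(\tau)}&D_1^{(\tau)}\end{bmatrix}$ --- this is the situation anticipated in Remark~\ref{remfw8}. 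Passing to a weak cluster point of the isometries $J_1^{(\tau)}$, which lie in a fixed operator ball, and then running Steps~3 and 4 of the proof of Theorem~\ref{thm6.10} to go from levels $\le n$ to all of $\gdel$, one gets a single isometry $J_1=\begin{bmatrix}A_1&B_1\\ C_1&D_1\end{bmatrix}$ together with the nc-function $\O$ given by the free realization formula \eqref{eqgz1} and an nc-function $v$ on $\gdel$; the isometry relations together with density and analyticity give $\O\Psi=\Phi$ on all of $\gdel$. A free realization is automatically an nc-realization, so $\O$ is an nc-function and $\|\O(x)\|\le 1$ on $\gdel$ by Proposition~\ref{prop6.10}, which is $(3)$.

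The main obstacle is exactly this passage to a free colligation. The Hahn--Banach and cone arguments of Sections~5--6 supply the model and a non-free nc-realization with no trouble; forcing $J_m=\idd_{\c^m}\otimes J_1$ requires that the finite test sets $E_\tau$ be rich enough --- generic base points with trivial commutant, dense at each level --- that the covariance of $\Psi,\Phi,U$ and $\d$ leaves no freedom in the colligation beyond a single level-$1$ isometry, and that this rigidity survive both cluster-point limits.
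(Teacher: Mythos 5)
Your treatment of the easy implications is correct and matches the paper: $(1)\Rightarrow(2)$ is Theorem~\ref{thm6.10}, and $(3)\Rightarrow(1)$ follows from Proposition~\ref{prop6.10} by restricting the model for $\idd-\O(y)^*\O(x)$ to the diagonal and sandwiching with $\Psi$.

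The gap is in $(2)\Rightarrow(3)$, which is the entire content of the theorem beyond Theorem~\ref{thm6.10}. You correctly identify that the issue is forcing $J_m=\idd_{\c^m}\otimes J_1$, but your proposed mechanism --- installing the tensor form already on the well-organized pairs $(E_\tau,\s_\tau)$ and carrying it through two cluster-point limits --- is asserted, not proved, and the assertion is doubtful where it matters. At the finite stage the only similarities available are the finitely many elements of $\s_\tau$, and the well-organized structure deliberately restricts which conjugates $S^{-1}MS$ land back in $E_\tau$; there is nowhere near enough similarity-invariance on a finite set to rigidify the lurking isometry into the form $\idd_{\c^m}\otimes J_1^{(\tau)}$. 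Your stated reason for rejecting a post-hoc construction (that nc-functions are not determined by their values on $\m_1^d$) attacks a strawman: the paper's argument does not build $J_1$ from level-one data of the functions. Instead, starting from the \emph{global} nc-model $u$ of \eqref{eqhx1}, one substitutes $x\mapsto S^{-1}xS$, uses the nc-covariance of $\Psi,\Phi,\delta,u$ and the density of $\invn$ in $\mn$ to insert an arbitrary $C\in\mn$ into the identity (equation \eqref{7.20}), and then specializes $C=\pi_l^*\pi_k$ to coordinate projections. This compresses the level-$n$ lurking-isometry data into vectors $p_{k,v,\eta,x}$ and $q_{k,v,\eta,x}$ lying in the \emph{fixed} fiber spaces $\k_1\oplus\ltwo^{(I)}$ and $\k_2\oplus\ltwo^{(J)}$; the direct-sum axiom gives $\mathcal{P}_n\subseteq\mathcal{P}_m$ and compatibility of the partial isometries, so a single isometry $J_1$ on the fiber spaces emerges, and a direct computation (\eqref{eqhy2}) verifies that $\idd_{\c^n}\otimes J_1$ implements the realization at every level. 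This step is exactly what is missing from your argument, and without it (or a genuine substitute) the implication $(2)\Rightarrow(3)$ is not established.
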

\begin{proof}  (1) implies (2) by Theorem~\ref{thm6.10}. (3) implies (1) because by Proposition~\ref{prop6.10},
we have 
\[
\idd - \O(y)^* \O(x) \= v(y)^* [ 1  - \delta(y)^* \delta(x) ] v(x) .
\]
Multiply by $\Psi(y)^* $ on the left and $\Psi(x)$ on the right, then restrict to the diagonal, to get
$\Theta(x,x) \geq 0$.

Assume that (2) holds, i.e.,
\be
\label{eqhx1}
\Psi(y)^* \Psi(x) - \Phi(y)^* \Phi(x) \= u(y)^* [ 1  - \delta(y)^* \delta(x) ] u(x) 
\ee
 holds, where $u$ is an  $\l(\h,{\ltwo}^{(J)})$-valued nc-function on $G_\delta$. Observe that if $n \in \n$, $S \in \invn$, and we replace $x$ with $S^{-1}xS$ in \eqref{eqhx1}, then
\[
\Psi(y)^*(S^{-1} \otimes \idd_{\k_1})\Psi(x)
-\Phi(y)^*(S^{-1} \otimes \idd_{\k_2}) \Phi(x) =
 u(y)^*\Big(S^{-1} \otimes \id{{\ltwoj}}-\delta(y)^*(S^{-1} \otimes \id{{\ltwoi}})\delta(x)\Big)u(x).
\]
Hence, as $\invn$ is dense in $\mn$, we obtain that in fact,
\be\label{7.20}
\Psi(y)^*(C \otimes \idd_{\k_1})\Psi(x) -
\Phi(y)^*(C \otimes \idd_{\k_2})\Phi(x)
\= u(y)^*\Big(C \otimes \id{{\ltwoj}}-\delta(y)^*(C \otimes \id{{\ltwoi}})\delta(x)\Big)u(x),
\ee
for all $C \in \mn$. For $k=1,\ldots,n$, define $\pi_k:\c^n \to \c$ by the formula
\[
\pi_k(v) = v_k,\qquad v = (v_1,\ldots,v_n) \in \c^n.
\]
Letting $C=\pi_l^*\pi_k$ in \eqref{7.20} and applying to $v\ot \eta$ and $w\ot \xi$,
with $v,w$ in $\c^n$ and $\eta, \xi$ in $\h$, leads to
\begin{align}
&\ip{\pi_k \ot \idd_{\k_1}\Psi (x)v \ot \eta}{\pi_l \ot \idd_{\k_1} \Psi(y) w \ot \xi} - \ip{\pi_k \ot \idd_{\k_2} \Phi(x)v \ot \eta}{\pi_l\ot \idd_{\k_2} \Phi(y) w \ot \xi}\notag\\
=\ &\ \ip{(\pi_k \otimes \id{{\ltwoj}})u(x)v \ot \eta}{(\pi_l \otimes \id{{\ltwoj}})u(y)w \ot \xi} \label{7.30}\\
&- \ip{(\pi_k \otimes \id{{\ltwoi}})\delta(x)u(x)v \ot \eta}{(\pi_l \otimes \id{{\ltwoi}})\delta(y)u(y)w \ot \xi}.\notag
\end{align}
For each $k=1,\ldots,n$, each $v \in \c^n$, each $\eta \in \h$, and each $x \in G_\delta \cap \mn^d$ define a vector $p_{k,v,\eta,x} \in \k_1 \oplus \ltwoi$ by
\[
p_{k,v,\eta,x} = (\pi_k \ot \idd_{\k_1} ) \Psi(x) (v \ot \eta) \oplus (\pi_k \otimes \id{{\ltwoi}})\delta(x)u(x)(v \ot \eta).
\]
Also, define  $q_{k,v,\eta,x} \in \k_2 \oplus \ltwoj$
\[
q_{k,v,\eta,x} = (\pi_k \ot \idd_{\k_2}) \Phi(x) (v \ot \eta) \oplus (\pi_k \otimes \id{{\ltwoj}})u(x)(v \ot \eta).
\]
In terms of the vectors, $p_{k,v,\eta,x}$ and $q_{k,v,\eta,x}$, \eqref{7.30} can be rewritten in the form,
\be\label{7.40}
\ip{p_{k,v,\eta,x}}{p_{l,w,\xi,y}} = \ip{q_{k,v,\eta,x}}{q_{l,w,\xi,y}}.
\ee
Hence, if we let
\[
\mathcal{P}_n = \spn\set{p_{k,v,\eta,x}}{k \le n,v\in \c^n, \eta \in \h, x\in G_\delta \cap \mn^d}
\]
and let
\[
\mathcal{Q}_n = \spn\set{q_{k,v,\eta,x}}{k \le n,v\in \c^n, \eta \in \h, x\in G_\delta \cap \mn^d},
\]
then there exists an isometry $L_n:\mathcal{P}_n \to \mathcal{Q}_n$ satisfying
\be\label{7.50}
L_n \, p_{k,v,\eta,x} \=q_{k,v,\eta,x}
\ee
for all $k,v$, and $x$.

Now let $n \le m$. Fix $k \le n$, $v \in \c^n$, $\eta \in \h$, and $x \in G_\delta \cap \mn^d$. Choose $v_0 \in \c^{m-n}$  and $x_0 \in G_\delta \cap \m_{m-n}^d$ and then define $v_1=v \oplus v_0$ and $x_1=x \oplus x_0$. We have that
\begin{align*}
p_{k,v_1,\eta, x_1}&=( \pi_k \ot \idd_{\k_1})  \Psi(x) (v_1  \ot \eta) \oplus (\pi_k \otimes \id{{\ltwoi}})\delta(x_1)u(x_1) (v_1 \ot \eta) \\
&=(\pi_k\ot \idd_{\k_1}) ( \Psi(x) \oplus \Psi(x_0) ) (v \ot \eta\oplus v_0\ot \eta ) \\
&\qquad \oplus (\pi_k \otimes \id{{\ltwoi}})\delta(x\oplus x_0)u(x \oplus x_0)(v \ot \eta\oplus v_0 \ot \eta)\\
&=(\pi_k\ot \idd_{\k_1}) ( \Psi(x) v \ot \eta \oplus \Psi(x_0)  v_0\ot \eta ) \\
&\qquad \oplus (\pi_k \otimes \id{{\ltwoi}})(\delta(x)u(x )v \ot \eta \oplus  \delta(x_0)u(x_0 ) v_0 \ot \eta)\\
&=(\pi_k\ot \idd_{\k_1}) ( \Psi(x) v \ot \eta ) 
 \oplus (\pi_k \otimes \id{{\ltwoi}})(\delta(x)u(x )v \ot \eta )\\
&=p_{k,v,\eta,x}
\end{align*}
This shows that if $n \le m$, $k \le n$, $v \in \c^n$, $\eta \in \h$ and $x \in G_\delta \cap \mn^d$, then $p_{k,v,\eta,x} =p_{k,v_1, \eta,x_1} \in \mathcal{P}_m$. Therefore, 
\be\label{7.60}
\mathcal{P}_n \subseteq \mathcal{P}_m
\ee
whenever $n \le m$. In like fashion, if $k \le n$, $v \in \c^n$, $\eta \in \h$ and $x \in G_\delta \cap \mn^d$, then $q_{k,v,\eta,x} =q_{k,v_1,\eta,x_1}$ so that
\be\label{7.70}
\mathcal{Q}_n \subseteq \mathcal{Q}_m.
\ee
Finally, observe that when $k \le n$, $v \in \c^n$ and $x \in G_\delta \cap \mn^d$ and $v_1$ and $x_1$ are as defined above,
\begin{align*}
L_n\ p_{k,v,\eta,x} &= q_{k,v,\eta,x}\\
&=q_{k,v_1,\eta,x_1}\\
&=L_m\ p_{k,v_1,\eta,x_1}\\
&=L_m\ p_{k,v,\eta,x}.
\end{align*}
Therefore, when $n \le m$,
\be\label{7.80}
L_n = L_m | \mathcal{P}_n.
\ee
Let $\mathcal{P} = (\cup_{n=1}^\infty\mathcal{P}_n)^- \subseteq \k_1 \oplus \ltwoi$ and $\mathcal{Q} = (\cup_{n=1}^\infty\mathcal{Q}_n)^- \subseteq \k_2 \oplus \ltwoj$.  \eqref{7.60}, \eqref{7.70}, and \eqref{7.80} together imply that there exists an isometry $L:\mathcal{P} \to \mathcal{Q}$ such that
\be\label{7.90}
Lp_{k,v,\eta,x} =q_{k,v,\eta,x}
\ee
whenever $n\in \n$, $k \le n$, $v \in \c^n$, $\eta \in \h$, and $x \in G_\delta \cap \mn^d$. By replacing $u$ in \eqref{eqhx1} with $(\id{{\c^n}} \otimes \tau^{(J)}) u$ where $\tau:\ltwo \to \ltwo$ is an isometry with $\ran(\tau)$ having infinite codimension in
 $\ltwo$ we may ensure that $\mathcal{P}$ has infinite codimension in  $\k_1 \oplus \ltwoi$ and $\mathcal{Q}$ has infinite codimension in  $\k_2 \oplus \ltwoj$. Hence, there exists an isometry (or even a Hilbert space isomorphism) $J_1:\k_1 \oplus \ltwoi \to \k_2 \oplus \ltwoj$ such that
\be\label{7.902}
J_1p_{k,v,\eta,x} =q_{k,v,\eta,x}
\ee
whenever $n\in \n$, $k \le n$, $v \in \c^n$ and $x \in G_\delta \cap \mn^d$.

There remains to show that $J_n=\id{{\c^n}}  \otimes J_1$ defines an nc-realization of $\Omega$.
First, let us show that
\be
\label{eqhy2}
(\id{{\c^n}}  \otimes J_1 )
\begm
\Psi(x) \\
\delta(x) u(x) 
\endm
\=
\begm
\Phi(x) \\
u(x)
\endm .
\ee

 Fix $n \in \n$, $v \in \c^n$ $\eta \in \h$, and $x \in G_\delta \cap \mn^d$.
\begin{align*}
&(\id{{\c^n}}\otimes J_1)\big( \Psi(x)  v \otimes \eta \oplus (\delta(x)u(x)v \otimes \eta)\big)\\
=&(\id{{\c^n}}\otimes J_1)\big( \oplus_{k=1}^n \pi_k \Psi(x) v \otimes \eta \oplus (\oplus_{k=1}^n (\pi_k \otimes \id{{\ltwoi}})\delta(x)u(x)v \otimes \eta)\big)\\
=&(\id{{\c^n}}\otimes J_1)\big( \oplus_{k=1}^n (\pi_k \Psi(x)  v \otimes \eta \oplus (\pi_k \otimes \id{{\ltwoi}})\delta(x)u(x)v \otimes \eta)\big)\\
=&\bigoplus_{k=1}^n J_1 (\pi_k \Psi(x) v \otimes \eta \oplus (\pi_k \otimes \id{{\ltwoi}})\delta(x)u(x)v \otimes \eta)\\
=&\bigoplus_{k=1}^n J_1 p_{k,v , \eta,x}=\bigoplus_{k=1}^n q_{k,v , \eta,x}\\
=&\bigoplus_{k=1}^n \pi_k \Phi(x)v \otimes \eta \oplus (\pi_k \otimes \id{{\ltwoj}})u(x)v \otimes \eta\\
=&\oplus_{k=1}^n (\pi_k \Phi(x) v \otimes \eta \oplus (\pi_k \otimes \id{{\ltwoj}})u(x)v \otimes \eta)\\
=& \oplus_{k=1}^n \pi_k \Phi(x) v \otimes \eta \oplus (\oplus_{k=1}^n (\pi_k \otimes \id{{\ltwoj}})u(x)v \otimes \eta)\\
=& \Phi(x)v \otimes \eta \oplus u(x)v\otimes \eta.
\end{align*}
Now, define 
\beq
v(x)  &\=&  ( \idd - (\idd_{\c^n} \otimes D_1) \delta(x) )^{-1} (\idd_{\c^n} \otimes C_1), \\
\O(x) &\=& (\idd_{\c^n} \otimes A_1) + (\idd_{\c^n} \otimes B_1) \delta(x) v(x),
\qquad \forall x \in \gdel \cap \mnd .
\eeq
Then $\O$ has a free $\delta$-realization, because
\[
\begin{bmatrix}\idd_{\c^n} \ot A_1  & \idd_{\c^n} \ot B_1\\
\idd_{\c^n} \ot C_1&\idd_{\c^n} \ot D_1\end{bmatrix}
\begm
\idd_{\c^n} \otimes \idd_{\k_1} \\
\delta(x) v(x)
\endm
\=
\begm
\O(x) \\
v(x)
\endm, \qquad \forall x \in \gdel \cap \mnd .
\]
It follows from \eqref{eqhy2} that $\O \Psi = \Phi$ on $\gdel$.
\end{proof}

\begin{cor}
\label{corh19}
If $\h$ and $\k_1$ are finite dimensional Hilbert spaces and 
if  $\Phi \in \ball(H^\infty_{\l(\h,\k_1)}(G_\delta))$
then  there exists an isometry
\[
J_1  \=  \begin{bmatrix}A&B\\C&D\end{bmatrix} \ : \h \oplus \ltwo^{(I)} \to \k_1 \oplus \ltwo^{(J)}
\]
so that for $x \in \gdel \cap \mn^d$, 
\be
\label{eqh20}
\Phi(x) \= \idcn\otimes A +  (\idcn \otimes B) \d(x) [ \idcn \otimes \id{{\ltwo^{(J)}}} - (\idcn \otimes D) \d(x) ]^{-1}
\idcn \otimes C.
\ee
Consequently, $\Phi$ has the power series expansion
\be
\label{eqh21}
\Phi(x) \= \idcn \ot A +
\sum_{k=0}^\i (\idcn \ot B) \d(x) [(\idcn \ot D) \d(x) ]^k (\idcn \ot C),
\ee
which is  absolutely convergent on $\gdel$. 
\end{cor}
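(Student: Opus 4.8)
The plan is to obtain this as the special case of Theorem~\ref{thm7.10} in which $\Psi$ is the identity. In the notation of Theorem~\ref{thm7.10}, take the first target space to be $\h$ itself, take the second target space to be the space called $\k_1$ in the statement of the corollary, and let $\Psi$ be the constant nc-function $\Psi(x) = \idcn \ot \idh$ on $\gdel$. Let $\Phi$ be the given element of $\ball(H^\infty_{\l(\h,\k_1)}(\gdel))$, viewed as a graded $\l(\h,\k_1)$-valued function. Since $\|\Phi(x)\| \le 1$ for every $x$, we have
\[
\Theta(x,x) \= \Psi(x)^*\Psi(x) - \Phi(x)^*\Phi(x) \= \idd - \Phi(x)^*\Phi(x) \ \geq \ 0
\]
on $\gdel$, so statement (1) of Theorem~\ref{thm7.10} holds. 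Statement (3) then provides an nc $\l(\h,\k_1)$-valued function $\O$ with $\O\Psi = \Phi$ that admits a free $\delta$-realization.

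Next I would unwind what statement (3) says here. Because $\Psi(x) = \idcn\ot\idh$, the identity $\O\Psi = \Phi$ forces $\O = \Phi$. By Definition~\ref{def6.20} together with \eqref{eqgz1}, a free $\delta$-realization of $\O$ is a single isometry $J_1 = \begin{bmatrix}A&B\\C&D\end{bmatrix}$, whose domain here is $\h\oplus\ltwo^{(I)}$ (the first summand being the first target space of $\Psi$, namely $\h$) and whose range is $\k_1\oplus\ltwo^{(J)}$, together with the family $J_m = \id{\c^m}\ot J_1$ for which
\[
\O(x) \= \idcn\ot A + (\idcn\ot B)\,\d(x)\,\bigl[\idcn\ot\id{\ltwo^{(J)}} - (\idcn\ot D)\d(x)\bigr]^{-1}\idcn\ot C
\]
for $x \in \gdel\cap\mnd$. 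Since $\O = \Phi$, this is precisely \eqref{eqh20}.

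For the power series \eqref{eqh21} I would expand the resolvent as a Neumann series. As $J_1$ is an isometry, its corner $D$ satisfies $\|D\|\le 1$; as $x\in\gdel$, $\|\d(x)\| < 1$; hence $\|(\idcn\ot D)\d(x)\| \le \|D\|\,\|\d(x)\| < 1$, and therefore
\[
\bigl[\idcn\ot\id{\ltwo^{(J)}} - (\idcn\ot D)\d(x)\bigr]^{-1} \= \sum_{k=0}^\i \bigl[(\idcn\ot D)\d(x)\bigr]^k ,
\]
the series converging in operator norm. Substituting into \eqref{eqh20} yields \eqref{eqh21}, and the bound
\[
\sum_{k=0}^\i \bigl\| (\idcn\ot B)\d(x)\bigl[(\idcn\ot D)\d(x)\bigr]^k(\idcn\ot C)\bigr\| \ \le \ \|B\|\,\|C\|\sum_{k=0}^\i \|\d(x)\|^{k+1} \= \frac{\|B\|\,\|C\|\,\|\d(x)\|}{1-\|\d(x)\|} \ < \ \i
\]
shows that the convergence is absolute at each $x\in\gdel$, and uniformly so on each set $\{x : \|\d(x)\|\le r\}$ with $r<1$.

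I do not expect a real obstacle: all the substance lives in Theorem~\ref{thm7.10}, and what remains is the bookkeeping that taking $\Psi$ to be the identity collapses its first target space to $\h$ --- so the realizing isometry has the stated domain $\h\oplus\ltwo^{(I)}$ --- together with the elementary Neumann-series estimate. The one point worth stating with care is that a corner of an isometry has norm at most $1$, which is what makes the resolvent expansion valid throughout $\gdel$.
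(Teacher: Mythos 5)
Your proposal is correct and is exactly the route the paper intends: the corollary is the specialization of Theorem~\ref{thm7.10} to $\Psi = \mathrm{id}$ (so $\O = \Phi$ and the theorem's $\k_1,\k_2$ become $\h,\k_1$, giving the stated domain and range of $J_1$), followed by the Neumann expansion of the resolvent, valid since $\|D\|\le 1$ and $\|\d(x)\|<1$ on $\gdel$. No gaps.
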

\begin{remark}
If $\h$ and $\k_1$ are both $\c$, then each term
\[
(\idcn \ot B) \d(x) [(\idcn \ot D) \d(x) ]^k (\idcn \ot C) 
\]
is a non-commutative polynomial, whose terms are linear combinations of
products of $k+1$ terms in the entries $\d_{ij}(x)$. 
If one groups the terms by this homogeneity, then the sum of these terms
has norm at most $\| \d(x) \|^{k+1}$.

Corollary~\ref{corh19}, in the case that $\delta(x) = (x^1, \dots, x^d)$,
was proved by Helton, Klep and McCullough \cite[Prop. 7]{hkm12}.
\end{remark}

A special case of Theorem~\ref{thm7.10} is the non-commutative corona theorem.
Take $\h$ and $\k_2$ to be $\c$, and choose $\Phi(x) = \vare$. Then we conclude:

\begin{thm}
\label{thmh3}
Let $\psi_1, \dots, \psi_k$ be in $H^\i(\gdel)$ and satisfy
\[
\sum_{j=1}^k \psi_j(x)^* \psi_j(x) \ \geq \ \vare^2 \, \idcn \qquad
\forall x \in \gdel \cap \mnd.
\]
Then there exist functions $\omega_1, \dots, \omega_k$ in
 $H^\i(\gdel)$ and satisfying $\| (\omega_1, \dots, \omega_k ) \| \leq \frac{1}{\vare}$
in $H^\i_{\l(\c^k,\c)}$ such that
\[
\sum_{j=1}^k \omega_j(x) \psi_j(x) = \idcn  \qquad
\forall x \in \gdel \cap \mnd. \]
\end{thm}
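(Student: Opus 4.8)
The plan is to obtain Theorem~\ref{thmh3} as the special case of Theorem~\ref{thm7.10} indicated in the remark preceding it, taking $\h=\k_2=\c$, $\k_1=\c^k$, and $\Phi\equiv\vare$. For $x\in\gdel\cap\mn^d$ define $\Psi(x)\in\l(\cn,\cn\otimes\c^k)$ to be the block column $(\psi_1(x),\dots,\psi_k(x))^t$, and $\Phi(x)=\vare\,\idcn\in\l(\cn,\cn)$ (the constant free polynomial $\vare$). Since each $\psi_j$ is a bounded nc-function, $\Psi$ is an $\l(\c,\c^k)$-valued nc-function — direct sums and similarities act coordinate-by-coordinate down the column — and $\Phi$ is trivially an $\l(\c,\c)$-valued nc-function. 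With $\Theta(y,x)=\Psi(y)^*\Psi(x)-\Phi(y)^*\Phi(x)$ we get
\[
\Theta(x,x) \= \sum_{j=1}^k \psi_j(x)^*\psi_j(x)\ -\ \vare^2\,\idcn \ \geq\ 0 \qquad \forall x\in\gdel,
\]
which is precisely condition (1) of Theorem~\ref{thm7.10}.

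Next I would invoke the implication (1)$\Rightarrow$(3) of Theorem~\ref{thm7.10}: there is an nc $\l(\c^k,\c)$-valued function $\O$ with $\O\Psi=\Phi$ on $\gdel$ that has a free $\delta$-realization. A free realization is in particular an nc-realization, so by Proposition~\ref{prop6.10} the function $[\idd-\O(y)^*\O(x)]$ has a (locally bounded, holomorphic) $\delta$ nc-model $v$; restricting to the diagonal and using $\|\delta(x)\|<1$ on $\gdel$ gives
\[
\idd - \O(x)^*\O(x) \= v(x)^*[\idd-\delta(x)^*\delta(x)]v(x)\ \geq\ 0, \qquad x\in\gdel,
\]
so $\|\O(x)\|\leq 1$ throughout $\gdel$; moreover $\O$ is locally bounded and holomorphic, hence $\O\in\ball\bigl(H^\infty_{\l(\c^k,\c)}(\gdel)\bigr)$.

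Finally I would unpack $\O$. Write $\O(x)\in\l(\cn\otimes\c^k,\cn)$ as the block row $[\omega_1(x),\dots,\omega_k(x)]$; equivalently set $\omega_j:=\O(\,\cdot\,)(\idcn\otimes\iota_j)$ where $\iota_j:\c\hookrightarrow\c^k$ is the $j$-th coordinate inclusion. Composing the nc-function $\O$ with the fixed linear map $\iota_j$ preserves the nc-function axioms and boundedness, so $\omega_j\in H^\infty(\gdel)$, and the identity $\O\Psi=\Phi$ becomes $\sum_{j=1}^k\omega_j(x)\psi_j(x)=\vare\,\idcn$ for every $x\in\gdel\cap\mn^d$. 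Setting $\omega_j':=\vare^{-1}\omega_j$ gives $\sum_j\omega_j'(x)\psi_j(x)=\idcn$ and, as an element of $H^\infty_{\l(\c^k,\c)}(\gdel)$, $\|(\omega_1',\dots,\omega_k')\|=\vare^{-1}\|\O\|\leq\vare^{-1}$. Renaming $\omega_j'$ as $\omega_j$ proves the theorem.

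The argument is essentially formal once Theorem~\ref{thm7.10} is in hand; there is no serious obstacle. The only points deserving a word of care are the index bookkeeping that identifies the column/row tensor structure with scalar-valued functions (so that $\Psi,\Phi$ really are nc and $\O\Psi=\Phi$ really is the scalar Bezout identity), and the observation — supplied by Proposition~\ref{prop6.10} — that possessing a \emph{free} $\delta$-realization forces $\|\O\|\le 1$ rather than merely boundedness, since this is exactly what yields the sharp norm bound $1/\vare$.
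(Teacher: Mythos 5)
Your proposal is correct and is essentially the paper's own argument: the paper derives Theorem~\ref{thmh3} precisely by specializing Theorem~\ref{thm7.10} with $\h=\k_2=\c$, $\k_1=\c^k$, $\Psi=(\psi_1,\dots,\psi_k)^t$ and $\Phi\equiv\vare$, then rescaling by $\vare^{-1}$. Your extra remarks — checking that $\Psi$ is a bounded nc $\l(\c,\c^k)$-valued function and that the free $\delta$-realization of $\O$ forces $\|\O\|\le 1$ via Proposition~\ref{prop6.10} — are exactly the details the paper leaves implicit, and they are handled correctly.
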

In the case $d=1$ and $\gdel$ is the unit disk, T4heorem~\ref{thmh3} is called
the Toeplitz-corona theorem. It was first proved by Arveson \cite{arv75};
Rosenblum showed how to deduce Carleson's corona theorem from the
Toeplitz corona theorem in \cite{ros80}.

Another consequence of  Theorem~\ref{thm7.10} is the following observation.
Let $ {\mathcal{F}}_\delta$ be the set of $d$-tuples $T$  of commuting operators
satisfying $\| \delta(T) \| \leq 1$. Recall from Definition~\ref{defmay6} that
\be
\label{eqhmay7}
\| f \|_{\delta, {\rm com}} = \sup_{\substack{T \in {\mathcal{F}}_\delta \\ \sigma(T) \subseteq G_\delta}} \norm{f(T)},
\ee
and  $H^\i_{\delta, {\rm com}}$ is the set of analytic functions $f$ on $\gdel$
for which ${\norm{f}}_{\delta,{\rm com}}  < \i$.  (It follows from \cite{amy12b} and
\cite{am13} that the supremum in \eqref{eqhmay7} is the same whether $T$ runs over commuting
operators with Taylor spectrum in $\gdel$ or commuting matrices with a spanning set of joint eigenvectors, and joint eigenvalues that lie in $\gdel$).

Then every free analytic function in 
$H^\i(\gdel)$ has a free $\delta$-realization, and this gives a $\delta$-realization for a
function in $H^\i_{\delta, {\rm com}}$. Conversely, every function in $H^\i_{\delta, {\rm com}}$ has a $\delta$-realization by \cite{at03}, and this extends to a  free $\delta$-realization
for some function $\phi$ in $H^\i(\gdel)$. So we have:

\begin{thm}
\label{thmh4}
Let $$I  \= \{ \phi \in H^\i(\gdel) \, | \, \phi |_{\m^d_1} = 0 \}.
$$
Then 
$H^\i(\gdel)/ I$ is isometrically isomorphic to 
$H^\i_{\delta, {\rm com}}$.
\end{thm}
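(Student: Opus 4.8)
The plan is to exhibit an explicit isometric isomorphism between $H^\i(\gdel)/I$ and $H^\i_{\delta,{\rm com}}$ via the restriction map $\phi \mapsto \phi|_{\m^d_1}$, and to show it is well-defined, norm-decreasing, surjective, and norm-increasing (hence isometric). First I would observe that if $\phi \in H^\i(\gdel)$ then its restriction $\phi|_{\m^d_1}$ — more precisely its extension to commuting matrices via $\phi^{\sss}$, using the Taylor functional calculus — is a well-defined holomorphic function on $\gdel \cap \m^d_1$, and by the parenthetical remark citing \cite{amy12b} and \cite{am13} the norm $\|\phi|_{\m^d_1}\|_{\delta,{\rm com}}$ can be computed by testing on commuting matrices with a spanning set of joint eigenvectors and joint eigenvalues in $\gdel$. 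Since any such commuting tuple $T$ on $\c^m$ is simultaneously triangularizable, it is similar to an upper-triangular tuple whose diagonal entries are the joint eigenvalues $\lambda_1,\dots,\lambda_m \in \gdel$; after conjugating we may put $T$ in the form of a point in $\gdel^{\sss}$ obtained from the diagonal tuple $\bigoplus \lambda_i$ by an upper-triangular similarity. Using that $\phi$ is nc (so $\phi^{\sss}$ respects direct sums and similarities, cf. \eqref{bx1}) one gets $\|\phi^{\sss}(T)\| \le \sup_{m}\sup_{x \in \gdel \cap \m^d_m}\|\phi(x)\| = \|\phi\|_{H^\i(\gdel)}$, hence the map is norm-decreasing and factors through $I = \{\phi : \phi|_{\m^d_1} = 0\}$, giving a well-defined contraction $\bar\rho: H^\i(\gdel)/I \to H^\i_{\delta,{\rm com}}$.

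Next I would address surjectivity and the reverse norm inequality simultaneously by invoking the realization machinery. Given $f \in H^\i_{\delta,{\rm com}}$ with $\|f\|_{\delta,{\rm com}} \le 1$, the commutative realization theorem of Ambrozie--Timotin \cite{at03} (or Ball--Bolotnikov \cite{babo04}) produces a $\delta$-realization for $f$ on the commutative domain: an isometry $\mathcal{J} = \begin{bmatrix}A&B\\C&D\end{bmatrix}$ with $f(\lambda) = A + B\delta(\lambda)(1 - D\delta(\lambda))^{-1}C$ for $\lambda \in \gdel \cap \m^d_1$. Now feed this same isometry into the free realization formula \eqref{may6_2}: define $\phi(x) = (\id{\cn}\ot A) + (\id{\cn}\ot B)\delta(x)[\idd - (\id{\cn}\ot D)\delta(x)]^{-1}(\id{\cn}\ot C)$ for $x \in \gdel \cap \mn^d$. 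By Proposition~\ref{prop6.10} (the lurking-isometry correspondence), $\phi$ is a bounded nc-function on $\gdel$ with $\|\phi\|_{H^\i(\gdel)} \le 1$ — indeed $\idd - \phi(y)^*\phi(x) = v(y)^*[1 - \delta(y)^*\delta(x)]v(x)$ with $v(x) = [\idd - (\id{\cn}\ot D)\delta(x)]^{-1}(\id{\cn}\ot C)$ an nc-function, and on the diagonal this forces $\phi(x)^*\phi(x) \le \idd$. By construction $\phi|_{\m^d_1} = f$, so $\bar\rho$ is onto and $\|\bar\rho^{-1}(f)\|_{H^\i(\gdel)/I} \le 1 = \|f\|_{\delta,{\rm com}}$; combined with the contraction property this gives the isometry. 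For the vector-valued scalar case here $\h = \k_1 = \c$ so one may as well cite Corollary~\ref{corh19} directly, observing that the free realization \eqref{eqh20} restricted to $\m^d_1$ is exactly a commutative realization.

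The main obstacle I expect is the careful verification that the two norms $\|\cdot\|_{H^\i(\gdel)}$ on the free side and $\|\cdot\|_{\delta,{\rm com}}$ on the commutative side are matched \emph{exactly} rather than merely up to a constant — this is where the content sits. One direction ($\|\phi|_{\m^d_1}\|_{\delta,{\rm com}} \le \|\phi\|_{H^\i(\gdel)}$) requires knowing that evaluating an nc-function at a commuting tuple via the Taylor calculus does not exceed its sup over all matrix tuples in $\gdel$; this uses that commuting tuples in ${\mathcal{F}}_\delta$ with spectrum in $\gdel$ are, after similarity, built from points of $\gdel$ by direct sums and upper-triangular conjugations, so $\phi^{\sss}$ of such a tuple is a similarity transform of $\phi$ evaluated at an honest point of $\gdel^{\sss}$ (and similarity-conjugates of points of $\gdel$ still have $\|\delta\|\le 1$ in the appropriate sense, which is why the norm is controlled) — the subtlety is that $\gdel^{\sss}$ may contain tuples with $\|\delta\| > 1$ under a non-unitary similarity, so one must be careful to use that $\sigma(T) \subseteq \gdel$ and the spectral/triangular structure rather than a crude norm bound, which is precisely the role of the cited results \cite{amy12b}, \cite{am13}. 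The other direction is handled cleanly by the realization formula as above. Once both inequalities and surjectivity are in hand, the isometric isomorphism follows, and I would note that $\bar\rho$ is automatically an algebra homomorphism since restriction of nc-functions respects pointwise (matricial) products.
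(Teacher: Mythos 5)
Your proposal follows essentially the same route as the paper's (very terse) proof: both directions are handled by realization formulas --- Ambrozie--Timotin \cite{at03} supplies a commutative $\delta$-realization which is promoted, via $J_n = \idd_{\c^n}\otimes J_1$, to a free realization of a norm-preserving lift, and Corollary~\ref{corh19} supplies a free realization whose restriction to $\m^d_1$ is a commutative realization, giving the contractivity of the quotient map. One caution: your first-paragraph argument for the inequality $\|\phi^{\sss}(T)\|\le\|\phi\|_{H^\i(\gdel)}$ via triangularization does not work as stated, since $\phi^{\sss}(T)=S^{-1}\phi(STS^{-1})S$ only yields a bound involving $\|S\|\,\|S^{-1}\|$ (and the cited results \cite{amy12b}, \cite{am13} address the equivalence of the two suprema defining $\|\cdot\|_{\delta,{\rm com}}$, not this estimate); the correct argument is the one you give as an aside, namely that the level-one restriction of the free realization \eqref{eqh20} is a contractive commutative realization, and the von~Neumann-type inequality for such realizations then bounds $\|f(T)\|$ for all commuting $T$ with $\|\delta(T)\|\le 1$ and $\sigma(T)\subseteq\gdel$.
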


\section{Oka Representation}
\label{seci1}

\begin{defin}\label{def8.10}
The \emph{free topology} on $\m^d$ is the topology that has as a basis the sets of the form $G_\delta$ where $\delta$ is a matrix of free polynomials in $d$ variables. A \emph{free domain} is a subset of $\m^d$ that is open in the free topology.
\end{defin}
That the definition actually defines a topology follows from the observation that  if $\delta_1$ and $\delta_2$ are matrices of polynomials, then
\[
G_{\delta_1} \cap G_{\delta_2} = G_{\delta_1 \oplus \delta_2}.
\]
An basic property of compact polynomially convex sets in $\c^d$ is that they can be approximated from above by $p$-polyhedrons (cf. \cite{alewer} Lemma 7.4). The following simple proposition asserts that compact sets in the free topology can be approximated from above by polyhedrons as well.
\begin{prop}\label{prop8.10}
Let $E \subseteq \m^d$ be a compact set in the free topology that is closed under (finite)
 direct sums.
 If $U$ is a neighborhood of $E$, and \[
E  \ \subset \ \cup_{\alpha \in A} G_{\delta_\alpha} \subseteq U ,
\]
then there exists  $\delta \in \{ {\delta_\alpha} : \alpha \in A \}$, a single matrix of free polynomials in $d$ variables,  and a positive number $t > 1$,
 such that
\[
E 
\subseteq G_{t\delta} \subseteq
 G_\delta \subseteq U.
\]
\end{prop}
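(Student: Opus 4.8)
The plan is to apply compactness of $E$ in the free topology twice, using closure of $E$ under direct sums as the decisive extra ingredient in between. First I would extract a finite subcover $E\subseteq G_{\delta_1}\cup\dots\cup G_{\delta_k}$ with each $\delta_i$ one of the $\delta_\alpha$, and then show that, because $E$ is closed under direct sums, such a finite union can be replaced by a single $G_{\delta_i}$. The mechanism is the identity
\[
\|\delta(M_1\oplus M_2)\| \= \max\{\|\delta(M_1)\|,\|\delta(M_2)\|\}
\]
for any matrix $\delta$ of free polynomials (after a coordinate shuffle $\delta(M_1\oplus M_2)$ is unitarily equivalent to $\delta(M_1)\oplus\delta(M_2)$); in particular $M\notin G_\delta$ forces $M\oplus N\notin G_\delta$ for all $N$. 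Using this: in the case $k=2$, if $E$ lay in neither $G_{\delta_1}$ nor $G_{\delta_2}$, pick $M_i\in E\setminus G_{\delta_i}$; then $M_1\oplus M_2\in E$ is in neither set, contradicting the cover. For general $k$, induct: $E\setminus G_{\delta_k}$ is still closed under direct sums and is covered by $G_{\delta_1},\dots,G_{\delta_{k-1}}$, so by induction it lies in a single $G_{\delta_j}$ with $j<k$, whence $E\subseteq G_{\delta_j}\cup G_{\delta_k}$ and the $k=2$ case finishes. This yields $\delta:=\delta_i\in\{\delta_\alpha:\alpha\in A\}$ with $E\subseteq G_\delta$, and $G_\delta\subseteq U$ since $\bigcup_\alpha G_{\delta_\alpha}\subseteq U$.

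To obtain $t>1$, note that for $t>1$ the matrix $t\delta$ is again a matrix of free polynomials and $G_{t\delta}=\{M:\|\delta(M)\|<1/t\}$, so $\{G_{t\delta}\}_{t>1}$ is a family of free-open sets which increases as $t\downarrow 1$ and whose union is $\{M:\|\delta(M)\|<1\}=G_\delta\supseteq E$. Applying compactness of $E$ to this cover, and using that the family is nested, I get a single $t>1$ with $E\subseteq G_{t\delta}$; since $t>1$ forces $G_{t\delta}\subseteq G_\delta$, the required chain $E\subseteq G_{t\delta}\subseteq G_\delta\subseteq U$ follows.

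The step I expect to be the main obstacle is collapsing the finite union to a single $G_{\delta_i}$: this is precisely where closure of $E$ under direct sums is indispensable, and it rests on the norm identity for $\delta$ evaluated on direct sums. The second compactness argument, on the nested family $\{G_{t\delta}\}_{t>1}$, is then routine.
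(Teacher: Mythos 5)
Your proof is correct and follows essentially the same route as the paper: finite subcover by compactness, the direct-sum trick (via the identity $\|\delta(M_1\oplus M_2)\|=\max\{\|\delta(M_1)\|,\|\delta(M_2)\|\}$) to collapse the cover to a single $G_{\delta_j}$, and then a strictness argument to extract $t>1$. The paper simply direct-sums one bad point from each of the $N$ sets at once instead of inducting, and obtains $t$ from the attained maximum $r=\max_{M\in E}\|\delta(M)\|<1$ rather than from a second (nested-cover) compactness argument; these are cosmetic differences.
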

\begin{proof}
Since $E$ is compact and $U$ is open, there are $\d_1, \dots , \d_N$ so that
\[
E \subseteq \cup_{j=1}^N G_{\d_j} \subseteq U .
\]
Claim:
\[
\min_{1 \leq j \leq N} \max_{M \in E} \| \d_j (M) \|  \ < 1.
\]
Indeed, otherwise there would for each $j$ be an $M_j \in E$ such that $\| \d_j(M_j) \| \geq 1$.
Then $\oplus_{j=1}^N M_j$ would be in $E$, but not in any $G_{\d_j}$.

Choose $j$ such that $ \max_{M \in E} \| \d_j (M) \| \= r \ < 1$. Let $\d = \d_j$ and choose $t$
between $1$ and $1/r$.
\end{proof}
\begin{defin}\label{def8.20}
By an $\l(\h,\k)$-valued \emph{free holomorphic function} is meant a graded function 
$\phi:D \to \l(\h,\k)$ such that $D$ is a free domain, $\phi$ is an $\l(\h,\k)$-valued 
graded function on $D$, and for every $M \in D$, there exists a basic free neighborhood $\gdel$ of $M$ in $D$ such that $\phi$ is bounded and nc on $ \gdel$.
\end{defin}
If $\delta$ is a matrix of polynomials, we shall let 
\be
\label{eqiw1}
\kdel \ := \ \{ M \in \m^d : \| \delta(M) \| \leq 1 \}.
\ee
\begin{defin}
\label{defiw2}
Let $E \subset \m^d$. The polynomial hull of $E$ is defined to be
\[
\hat{E} \ := \ \bigcap \{ \kdel \, : \, E \subseteq \kdel \} .
\]
(If $E$ is not contained in any $\kdel$, we declare $\hat{E} $ to be $\m^d$.)
We say a compact set is polynomially convex if it equals its polynomial hull.
We say an open set $D$  is polynomially convex if for any compact set $E \subset D$, 
the polynomial hull of $E$ is a compact subset of $D$.
\end{defin}
Note that $\hat{E}$ is always an nc set, so if  $\hat{E}$ is compact and contained in some
open set $U$, then by Proposition~\ref{prop8.10} it is contained in a single basic free open set in 
$U$.

\begin{example}
Consider the free annulus $A$ 
\[
A \ := \ \bigcup_{0 \leq \theta \leq 2 \pi}
\{ x \in \m \, : \, \| x -  \frac{3}{4}e^{i\theta} \idd \| < \frac{1}{4} \} .
\]
Suppose $D$ is a polynomially convex free domain containing $A$.
Letting $E$ range over the compact subsets of $\{ z \in \c \ : \frac 12 < |z| < 1 \}$,
and using that $\hat{E} \subset D$, we conclude that $D \cap \m_1 \supseteq \D$,
so $D$ contains all normal matrices with spectrum in $\D$.

For each $r < 1$, we let $E = r \overline{\D}$. By Proposition~\ref{prop8.10},
we conclude that there exists $\d$ such that $\hat{E} \subset \gdel \subset D$.
As $\delta$ is a contractive matrix-valued function on $r \overline{\D}$, it has a realization formula,
and so is contractive on all matrices $M$ with $\| M \| \leq r$.
(Note that in one variable, a polynomial is uniquely defined on $\m$ by its
action on $\m_1 = \c$). 
We conclude therefore that $D$ must contain the open unit matrix ball:
\[
\{ M : \| M \| < 1 \} \ \subseteq \ D .
\]
So polynomial convexity has filled in the holes at all levels.
\end{example}

The following theorem is the free analogue of the Oka-Weil theorem.
\begin{thm}\label{thm8.10}
Let $\h$ and $\k$ be finite dimensional Hilbert spaces.
Let $E \subseteq \m^d$ be a compact set in the free topology,
and assume that $E$ is polynomially convex.
Let $U$ be a free domain containing $E$, and 
let  $\phi$ be a free holomorphic $\l(\h,\k)$-valued function defined on 
$U$.
 Then $\phi$ can be uniformly approximated on $E$ by $\l(\h,\k)$-valued free polynomials.
\end{thm}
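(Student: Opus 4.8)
The plan is to reduce the problem to a single basic free open set $G_\delta$ on which $\phi$ is bounded and nc, apply the free $\delta$-realization of Corollary~\ref{corh19}, and then observe that the partial sums of the resulting power series are $\l(\h,\k)$-valued free polynomials converging to $\phi$ uniformly on $E$.

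First I would use the definition of a free holomorphic function: each $M \in E \subseteq U$ lies in a basic free open set $G_{\delta_M} \subseteq U$ on which $\phi$ is bounded and nc. Since $E$ is polynomially convex it is closed under finite direct sums, so Proposition~\ref{prop8.10} applies to the cover $E \subseteq \bigcup_{M \in E} G_{\delta_M} \subseteq U$ and yields a single matrix of free polynomials $\delta$ (one of the $\delta_M$) and a number $t > 1$ with $E \subseteq G_{t\delta} \subseteq G_\delta \subseteq U$; in particular $\phi$ is bounded and nc on $G_\delta$, i.e.\ $\phi \in H^\infty_{\l(\h,\k)}(G_\delta)$. After rescaling $\phi$ by a positive constant (which does not change whether it is approximable by polynomials), we may assume $\phi \in \ball(H^\infty_{\l(\h,\k)}(G_\delta))$.

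Next I would invoke Corollary~\ref{corh19} (a consequence of Theorem~\ref{thm7.10}) to write
\[
\phi(x) \= \idcn \ot A \ + \ \sum_{k=0}^{\infty} (\idcn \ot B)\,\delta(x)\,[(\idcn \ot D)\,\delta(x)]^k\,(\idcn \ot C),
\]
for $x \in G_\delta \cap \mnd$, with $J_1 = \begin{bmatrix} A & B \\ C & D \end{bmatrix}$ an isometry and the series absolutely convergent on $G_\delta$. Let $\phi_N$ denote the $N$-th partial sum. Because the entries of $\delta$ are free polynomials and $A,B,C,D$ do not depend on the level $n$, each term $(\idcn \ot B)\delta(x)[(\idcn \ot D)\delta(x)]^k(\idcn \ot C)$ is, as a function of $x$, a finite linear combination of words in the variables with coefficients in $\l(\h,\k)$; hence each $\phi_N$ is an $\l(\h,\k)$-valued free polynomial. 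Finally, for the uniform estimate on $E$: compactness of $E$ (it meets only finitely many levels, on each of which $x \mapsto \norm{\delta(x)}$ is continuous) together with $E \subseteq G_{t\delta}$ gives $\rho := \sup_{x \in E}\norm{\delta(x)} \le 1/t < 1$, and since $J_1$ is an isometry, $\norm{B},\norm{C},\norm{D} \le 1$, so for $x \in E$,
\[
\norm{\phi(x) - \phi_N(x)} \ \le\ \sum_{k=N}^{\infty} \rho^{\,k+1} \ =\ \frac{\rho^{\,N+1}}{1-\rho}\,,
\]
which tends to $0$ uniformly in $x$ as $N \to \infty$. Thus $\phi$ is a uniform limit on $E$ of $\l(\h,\k)$-valued free polynomials.

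Since the heavy machinery is already in Corollary~\ref{corh19} and Proposition~\ref{prop8.10}, the main obstacle is really a matter of care rather than depth: one must feed Proposition~\ref{prop8.10} the particular cover of $E$ arising from the local-boundedness clause in the definition of free holomorphic (not an arbitrary subcover of $U$), so that the single $\delta$ it returns is one on which $\phi$ is actually bounded and nc; and one must check that the partial sums $\phi_N$ are genuinely free polynomials, with the same $\l(\h,\k)$-valued coefficients at every matrix level --- which is precisely what the word \emph{free} in ``free $\delta$-realization'' guarantees.
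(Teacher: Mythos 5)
Your proof is correct and follows essentially the same route as the paper's: reduce via Proposition~\ref{prop8.10} to a single $G_\delta$ with $E \subseteq G_{t\delta}$ on which $\phi$ is bounded and nc, then apply the free $\delta$-realization and truncate the resulting Neumann series, which converges uniformly on $G_{t\delta}$. You supply some details the paper leaves implicit (that polynomial convexity gives closure under direct sums, and the explicit geometric tail bound), but the argument is the same.
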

\begin{proof}
For each point $M$ in $E$, there is a matrix $\delta_M$ of free polynomials such that
$ M \in G_{\delta_M} \subseteq U$ and $\phi$ is bounded on $G_{\delta_M}$.
By Proposition~\ref{prop8.10}, we can find a single matrix  $\delta$ of free polynomials, and $ t > 1$,
such that  $E 
\subseteq G_{t\delta} \subseteq
 G_\delta$
and such that $\phi$ is bounded on $\gdel$.
 Hence, by Theorem \ref{thm7.10}, $\phi$ has a $\delta$ free realization. Using the resulting Neumann series for $\phi$ (which converges uniformly on $G_{t\delta}$) yields that $\phi$ can be uniformly approximated by polynomials on $E$.
\end{proof}
As an application of Theorem \ref{thm8.10} the following result gives a purely holomorphic characterization of free holomorphic functions. If $\phi$ is a graded function defined on a free domain $D$, let us agree to say that \emph{$\phi$ is locally approximable by polynomials} if for each $M \in D$ and $\epsilon > 0$, there exists a free neighborhood $U$ of $M$ and a free polynomial $p$ such that
\[
\sup_{x \in U\cap D}\norm{\phi(x)-p(x)} < \epsilon.
\]
\begin{thm}\label{thm8.20}
Let $D$ be a free domain and let $\phi$ be a graded function defined on $D$. Then $\phi$ is a free holomorphic function if and only if $\phi$ is locally approximable by polynomials.
\end{thm}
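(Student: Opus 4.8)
The plan is to establish the two implications separately, with the bulk of the work riding on the free Oka--Weil theorem (Theorem~\ref{thm8.10}) already in hand.

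\emph{($\Leftarrow$) Locally approximable by polynomials implies free holomorphic.} Suppose $\phi$ is locally approximable by polynomials on the free domain $D$. Fix $M \in D$; choose a free neighborhood $U$ of $M$ with $U \subseteq D$ and a free polynomial $p$ with $\sup_{x \in U}\norm{\phi(x)-p(x)} < 1$. Shrinking, we may take $U = G_\delta$ for some matrix $\delta$ of free polynomials, with $\gdel \subseteq D$. Then $\phi$ is bounded on $\gdel$ (by $\norm{p}_{\gdel}+1$, and $p$ is bounded on $\gdel$ since $\gdel$ is nc-bounded). It remains to check $\phi$ is an nc-function on $\gdel$; but for each $\epsilon$ we can find polynomials $p_\epsilon$ with $\norm{\phi - p_\epsilon} < \epsilon$ on a free neighborhood of any given point, and each $p_\epsilon$ respects direct sums and similarities exactly, so passing to the limit shows $\phi(x\oplus y)=\phi(x)\oplus\phi(y)$ and the intertwining identity \eqref{2.30} hold wherever both sides are defined inside $\gdel$. (Here one uses that a basic free open set is an nc-domain, so direct sums and unitary conjugations of points of $\gdel$ stay in $\gdel$.) Hence $\phi$ is a bounded nc-function on $\gdel$, so $\phi$ is free holomorphic on $D$ by Definition~\ref{defa4}.

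\emph{($\Rightarrow$) Free holomorphic implies locally approximable by polynomials.} Suppose $\phi \in H(D)$ and fix $M_0 \in D$. By Definition~\ref{defa4} there is $\delta$ with $M_0 \in G_\delta \subseteq D$ on which $\phi$ is a bounded nc-function. The key point is to produce, around $M_0$, a \emph{compact polynomially convex} set inside $G_\delta$ on which we may invoke Theorem~\ref{thm8.10}. Take $E = \kdel' \cap \{x : \norm{x}\le r\}$ for a suitable second matrix $\delta'$ of polynomials and radius $r$, or more directly: let $E := \{ M \in \m^d : \norm{\delta(M)} \le s \}\cap\{\norm{x}\le r\}$ for some $s<1$. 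Since $\delta$ and $\norm{\cdot}$ are free-continuous and $\m^d = \cup_n\m_n^d$ is a disjoint union of finite-dimensional spaces, $E$ is compact in the free topology, $E$ is an nc-set, and $M_0$ lies in the free-interior of $E$ provided $\norm{\delta(M_0)}<s$ and $\norm{M_0}<r$, which we arrange. Finally $E$ is polynomially convex: it is an intersection of sets of the form $K_{\delta_\alpha}$ (the ball $\norm{x}\le r$ is $K_{\delta_0}$ for the linear $\delta_0(x)=\frac1r(x^1,\ldots,x^d)$ stacked, and $\norm{\delta(M)}\le s$ is $K_{\frac1s\delta}$), hence equals its polynomial hull. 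Now $\phi$ is free holomorphic on the free neighborhood $G_\delta$ of $E$, so Theorem~\ref{thm8.10} gives free polynomials $p_k$ with $p_k \to \phi$ uniformly on $E$. Since $E$ contains a free neighborhood of $M_0$, this exhibits $\phi$ as locally approximable by polynomials at $M_0$; as $M_0$ was arbitrary, we are done.

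\emph{Main obstacle.} The routine direction is the easy one; the real content is organizing the ($\Rightarrow$) direction so that Theorem~\ref{thm8.10} applies, i.e.\ constructing a compact, polynomially convex set $E$ with $M_0$ in its free interior and $E$ contained in the given $G_\delta$. The subtlety is that polynomial convexity must hold \emph{at all matrix levels simultaneously} --- one must check that the candidate $E$ really is an intersection of sets $\kdel$, using that $\{\norm{x}\le r\}$ and sublevel sets of $\norm{\delta(\cdot)}$ are themselves of the form $\kdel$. Once that bookkeeping is done, Theorem~\ref{thm8.10} (whose proof already packages the realization formula and the uniformly convergent Neumann series) does all the analytic work, and no further estimates are needed.
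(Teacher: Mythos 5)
Your proof follows essentially the same route as the paper's: sufficiency because a local uniform limit of free polynomials is nc and bounded, and necessity by manufacturing a compact, polynomially convex set of the form $K_{\delta'}$ squeezed between a basic free neighborhood of $M_0$ and the $\gdel$ on which $\phi$ is bounded and nc, then invoking Theorem~\ref{thm8.10} (the paper simply takes $E=K_{t\delta}$ for a suitable $t>1$; your extra intersection with a norm ball changes nothing essential, since it is again a single $K_{\delta'}$). One sub-claim in your sufficiency argument is false as stated: a basic free open set need not be nc-bounded, and a free polynomial need not be bounded on one. For instance, with $d=1$ and $\delta(x)=x\cdot x$, the set $G_\delta\cap \m_2^1$ contains $\begin{bmatrix}0&N\\0&0\end{bmatrix}$ for every $N$, so $p(x)=x$ is unbounded there; and even nc-boundedness (level-by-level boundedness) would not give a bound on $p$ uniform over all matrix sizes. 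The fix is immediate and does not disturb the rest of the argument: intersect $U$ with the bounded basic free open set $\{x:\norm{x}<\norm{M}+1\}$, on which every free polynomial is uniformly bounded across all levels. Finally, be aware that your justification of compactness of $E$ in the free topology is not an argument (a closed set meeting infinitely many levels is never compact in the disjoint-union topology, and compactness in the coarser free topology needs its own verification); the paper makes the same unverified assertion for $K_{t\delta}$, so this is not a deviation from its proof, but it is a point worth flagging rather than waving at.
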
 
\begin{proof}
Sufficiency follows because the uniform limit of free polynomials is nc and bounded.
For necessity, let $M$ be in $D$. Then since $D$ is open, there exists  a matrix  $\delta$ of free polynomials, and $ t > 1$,
such that  $M \in
 G_{t\delta} \subseteq
 G_\delta$. Now apply Theorem~\ref{thm8.10} with $E = K_{t\delta}$.
\end{proof}

\section{Free Meromorphic Functions}
\label{secj}

It is a natural question to ask whether rational functions are free holomorphic away from their poles.
A rational function means any function that can be built up from free polynomials by finitely many arithmetic operations. We shall say the polar set of a rational function $\phi$
is the set of $x \in \mnd$ at which, at some stage in the evaluation of the function $\phi(x)$, one
has to divide by a matrix that is not invertible. This obviously depends on the presentation of the function.
We can extend this notion to meromorphic functions on a free open set $D$, defining them
to be  any function that can be built up from free holomorphic functions on $D$ by finitely many arithmetic operations,
and defining their singular set to be the set of points $x \in D$ for which,  at some stage in the evaluation of the function, one
has to divide by a matrix that is not invertible.

\begin{thm}
\label{thmj1}
Let $\phi$ be a meromorphic function on a free domain $D$. Then $\phi$ is free holomorphic off
its singular set.
\end{thm}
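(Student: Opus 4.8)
The plan is to reduce the statement to a purely local question and then argue by induction on the number of arithmetic operations used to build $\phi$ from free holomorphic functions. Fix a point $M \in D \cap \mn^d$ that is not in the singular set of $\phi$; I want to produce a basic free open set $\gdel$ with $M \in \gdel \subseteq D$ on which $\phi$ agrees with a bounded nc-function. Since being free holomorphic is exactly being locally a bounded nc-function (Definition~\ref{defa4}), and since sums and products of bounded nc-functions on a common $\gdel$ are again bounded nc-functions there, the only operation that requires work is inversion: if $\psi$ is free holomorphic near $M$ and $\psi(M)$ is invertible, I must show $\psi^{-1}$ is free holomorphic near $M$. Granting this, an induction on the construction tree of $\phi$ finishes the proof: at each node, restrict to a small enough common basic free open set on which all the already-constructed subexpressions are bounded and nc, and where any matrix being inverted stays invertible (an open condition at the matrix level, in the nc-metric).

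So the crux is the following local claim: if $\psi \in H^\infty(\gdel)$ for some basic free open $\gdel$, and $\psi(M)$ is invertible at some $M \in \gdel \cap \mn^d$, then there is a basic free open set $\gdelp$ with $M \in \gdelp \subseteq \gdel$ such that $\psi^{-1}$ is a bounded nc-function on $\gdelp$. First I would check the algebraic identities: wherever $\psi(x)$ is invertible, the function $x \mapsto \psi(x)^{-1}$ is a graded function and inherits both nc-properties, since $\psi(x \oplus y)^{-1} = \psi(x)^{-1} \oplus \psi(y)^{-1}$ and $\psi(s^{-1}xs)^{-1} = s^{-1}\psi(x)^{-1} s$ follow immediately from the corresponding identities for $\psi$ by inverting both sides. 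The remaining point is to carve out a basic free open neighborhood of $M$ on which $\psi^{-1}$ is \emph{bounded}. Here I would use the realization machinery: shrinking $\gdel$, assume $\|\psi\| \le 1/2$ on $\gdel$ after scaling (we may even assume $\psi(M) = I$ after composing with the constant $\psi(M)^{-1}$), so $\psi = I - \eta$ with $\|\eta(x)\| $ small; but smallness of $\|\eta(x)\|$ at the single matrix $M$ does not transfer to a whole basic free open set automatically — that is exactly the subtlety the paper keeps flagging.

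The main obstacle, then, is producing a \emph{free}-open (not merely nc-metric-open) neighborhood of $M$ on which $\|\psi - \psi(M)\|$ is small. I would handle this by appealing to Corollary~\ref{corh19}: since $\psi$ is bounded and nc on $\gdel$, after rescaling it lies in $\ball(H^\infty_{\l(\h,\k)}(\gdel))$ and has a free $\delta$-realization, hence a Neumann-type power series $\psi(x) = \idcn \ot A + \sum_{k\ge 0}(\idcn \ot B)\delta(x)[(\idcn \ot D)\delta(x)]^k(\idcn \ot C)$ converging absolutely and uniformly on every $G_{t\delta}$ with $t > 1$. On $G_{t\delta}$ one has $\|\delta(x)\| < 1/t$, so the tail $\sum_{k \ge 1}$ is bounded by a constant times $1/t$, uniformly; thus on $G_{t\delta}$ for $t$ large the function $\psi$ is within any prescribed $\varepsilon$ of the constant $\idcn \ot A + (\idcn \ot B)\delta(x)(\idcn \ot C)$, and in particular $\|\psi(x) - \psi(x')\|$ is controlled across $G_{t\delta}$. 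Intersecting with a further basic free open set of the form described after Definition~\ref{defa31} that pins the scalar part near $M$ (and using the nc-conjugation/direct-sum closure to pass to a genuine basic free neighborhood of $M$ inside $\gdel$), I get a basic free open $\gdelp \ni M$ on which $\psi$ stays in a small ball around the invertible matrix $\psi(M)$, hence is uniformly invertible with $\|\psi^{-1}\| \le 2\|\psi(M)^{-1}\|$, say, via the Neumann series for $\psi^{-1}$. That gives $\psi^{-1} \in H^\infty(\gdelp)$, completing the local claim and, with the induction above, the theorem.
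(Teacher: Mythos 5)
Your reduction to a single local claim --- if $\psi$ is a bounded nc-function on $\gdel$ and $\psi(M)$ is invertible, then $\psi^{-1}$ is bounded and nc on some basic free open neighborhood of $M$ --- is exactly the paper's reduction, and your verification that $\psi^{-1}$ inherits the graded and nc identities is fine. The gap is in the boundedness step, and it is a real one. You propose to find a free open neighborhood of $M$ on which $\psi$ stays uniformly close to $\psi(M)$ by passing to $G_{t\delta}$ with $t$ large and using the Neumann series of Corollary~\ref{corh19}. But $M \in G_{t\delta}$ forces $t < 1/\|\delta(M)\|$, so $t$ is bounded above by a fixed constant and the tail estimate of order $1/(t-1)$ cannot be made small while $M$ remains in the set; moreover that series is centered at the constant term $\idcn \ot A$, not at $\psi(M)$, and these need not be close. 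The auxiliary construction ``described after Definition~\ref{defa31}'' for pinning values near a point applies only to scalar points in $\m_1^d$; for $M \in \mn^d$ with $n \ge 2$ there is, as the introduction emphasizes, no power-series expansion centered at $M$, and in general no basic free open neighborhood of $M$ on which $\psi$ is uniformly close to ampliations of $\psi(M)$. (Similarly, ``composing with the constant $\psi(M)^{-1}$'' is not available: that matrix lives only at level $n$ and cannot be applied to $\psi(x)$ at other levels.)

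The paper circumvents precisely this obstruction with a different device, which you will need in some form. Let $T = \psi(M)$ and choose a one-variable polynomial $p$ with $p(T) = 0$ and $p(0) = 1$ (possible since $T$ is invertible, so $0 \notin \sigma(T)$). Adjoin $2\,p\circ\psi$ to $\delta$ to form $\delta'$; then $G_{\delta'}$ is a free open neighborhood of $M$ on which $\|p(\psi(N))\| \le \tfrac12$, hence $[\,{\rm id} - p(\psi(N))]^{-1}$ has norm at most $2$. Writing $1 - p(z) = c\,z\prod_j(z-\beta_j)$ and using the a priori bound $\|\psi\| \le B$ on $\gdel$ gives $\|\psi(N)^{-1}\| \le 2|c|\prod_j(B+|\beta_j|)$ uniformly on $G_{\delta'}$. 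The point is that one does not localize $\psi(N)$ near $\psi(M)$ at all; one only forces $p(\psi(N))$ to be small, which is a condition expressible by enlarging $\delta$ and which still yields a uniform bound on the inverse. Without this (or an equivalent mechanism for manufacturing free open, rather than nc-metric-open, neighborhoods), your argument does not close.
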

\begin{proof}
Since addition, multiplication and scalar multiplication all preserve free holomorphicity, it is sufficient
to prove that if $\phi$ is free holomorphic on $D$ and $\phi(M)$ is invertible
for some point $M$ in $D$, then there is a free open neighborhood of $M$ in $D$
on which $\phi(x)^{-1}$ is bounded.

Since $D$ is open, there exists $\delta$ such that $ M \in \gdel \subseteq D$, and
such that $\phi$ is bounded by $B$ on $\gdel$. Let $T = \phi(M)$, and 
let $p$ be a polynomial in one variable satisfying 
$p(T) = 0, p(0) = 1$.
Let $\delp = \delta \oplus (2 p \circ \phi)$.
If $N \in \gdelp$, then $\| p \circ \phi(N) \| \leq \frac 12$, so
\[
\| [ \idd - p \circ \phi(N) ]^{-1} \| \ \leq \ 2 .
\]
Let $\phi(N) = S$, and let $c, \beta_j \in \C$ satisfy
\[
1 - p(z) \= c z \prod (z - \beta_j) .
\]
Then
\[
[ \idd - p \circ \phi(N) ]^{-1} \= \frac{1}{c}\,  S^{-1} \prod (S - \beta_j)^{-1} .
\]
Therefore
\[
\| S^{-1} \| \ \leq \
2 | c| \prod (B + |\beta_j|),
\]
so $\phi(x)^{-1}$ is bounded on $\gdelp$, as required.
\end{proof}

\section{Index}
\label{secind}

\subsection{Notation}
\label{subsecnot}

{}

$\mn$ The $n$-by-$n$ matrices

 $\md = \cup_{n=1}^\i \mn^d$ Second paragraph, Subsection~\ref{subsecaa}

$\invn$ Invertible $n$-by-$n$ matrices \eqref{eqar1}

$\unin$ Unitary $n$-by-$n$ matrices \eqref{eqar2}

${\rm nc}(D)$ Definition \ref{defa3}

$\gdel = \{ x \in \md: \| \delta(x) \| < 1 \}$ \eqref{eqa3}

$\| f \|_{\delta, {\rm com}}, H^\i_{\delta, {\rm com}}$ Definition \ref{defmay6}

$\l(\h,\k)$ Bounded linear transformations between Hilbert spaces $\h$ and $\k$ Subsection~\ref{subseca2}

$nc_\k(D)$ Line after \eqref{2.100}

$A^\sim$ Envelope of $A$, Definition~\ref{def2.20}

${\rm nc}_{\l(\h,\k)}$ The $\l(\h,\k)$-valued nc functions on $D$, Definition \ref{defax1}

$f^\sim$ \eqref{2.140}

$f^\sss$ \eqref{bx1}

$d(f,g)$ \eqref{3.15}

$\calrm, \calbm, \calgm$ Definition~\ref{def4.10}

$\calb,\calg,\calr$ Paragraph following  Definition~\ref{def4.10}

$\vspace (E,\s), \grade(\calb)$ Paragraph before Proposition \ref{prop4.10}

$X_\calv, X_\calv^r$ \eqref{4.114} and \eqref{4.115}

$X_\calv \stackrel{\rho}{\sim} X_\calb$ \eqref{4.120}

$M_\phi$ \eqref{4.140}, \eqref{5.05}

$H^\i(D), H^\i_{\l(\h,\k)}$ First two paragraphs of Section~\ref{secwo}

$E^{[2]}$  \eqref{5.051}

$\calv_{\l(\h)}, \calv_{\l(\h,\M)} $ Second paragraph, Subsection \ref{ssechv}

$\hvh$ Third paragraph, Subsection \ref{ssechv} 

$\rvh$ \eqref{eqft1}

$\pvh$ \eqref{5.10}

$\cvh, \ctvh$ Definition~\ref{def5.05}

$\mathcal{N}_L$ \eqref{eqxe3}

$\htwole$ line after  \eqref{eqxe3}

$\kdel = \{ x \in \md: \| \delta(x) \| \leq 1 \}$ \eqref{eqiw1}

$\hat{E}$ Definition \ref{defiw2}

\subsection{Definitions}

nc-set, nc-open, nc-closed, nc-bounded, nc-domain: Definition \ref{defa1}

nc-function: Definition \ref{defa3}

basic free open set, free domain, free topology: Definition \ref{defa31}

$\k$-valued nc-function: Definition \ref{defa5}

$\delta$ nc-model: Definition \ref{defa6}

free $\delta$-realization: Definition \ref{defa7}

envelope of $A$: Definition \ref{def2.20}

$\l(\h,\k)$-valued nc functions on $D$, Definition \ref{defax1}

locally bounded, locally uniformly bounded: Definition \ref{def3.10}

partial nc-set: First paragraph, Section~\ref{secpncs}

partial nc-function: Second paragraph, Section~\ref{secpncs}

$\s$-invariant function: Third paragraph, Section~\ref{secpncs}

generic, complete, $E$-reducible: Fourth paragraph, Section~\ref{secpncs}

well-organized pair: Definition \ref{def4.10}

base $\calb$ of well-organized pair:  Paragraph following Definition~\ref{def4.10}

ordered partition: \eqref{4.80}

$\delta$-model on well-organized pairs: Definition \ref{def5.10}

special $\delta$-model: Paragraph before Proposition~\ref{prop5.50}

$\delta$-realization, (partial nc,  $\s$-invariant) on well-organized pairs: Definition \ref{def5.20} 

$\delta$-model (nc, locally bounded, holomorphic): Definition \ref{def6.10}

$\delta$-realization, nc-realization, free realization: Definition \ref{def6.20}

$\l(\h,{\ltwo}^{(J)})$-valued nc-function to order $n$: Definition \ref{def6.30}

polynomial hull, polynomially convex: Definition \ref{defiw2}

\bibliography{../references}

\end{document}